\numberwithin{equation}{section}
\numberwithin{figure}{section}
\theoremstyle{plain}
\newtheorem{theorem}{Theorem}[section]
\newtheorem{lemma}[theorem]{Lemma}
\newtheorem{proposition}[theorem]{Proposition}
\theoremstyle{definition}
\newtheorem{definition}[theorem]{Definition}
\newtheorem{remark}[theorem]{Remark}
\newcommand{\bitem}{\begin{itemize}}
\newcommand{\eitem}{\end{itemize}}
\newcommand{\mc}[1]{\mathcal{#1}}
\newcommand{\N}{\mathbb{N}}
\newcommand{\R}{\mathbb{R}}
\newcommand{\bpm}{\begin{pmatrix}}
\newcommand{\epm}{\end{pmatrix}}
\newcommand{\bvm}{\begin{vmatrix}}
\newcommand{\evm}{\end{vmatrix}}
\newcommand{\bsm}{\left(\begin{smallmatrix}}
\newcommand{\esm}{\end{smallmatrix}\right)}
\newcommand{\T}{\top}
\newcommand{\la}{\langle}
\newcommand{\ra}{\rangle}
\newcommand{\mrm}[1]{\mathrm{#1}}
\newcommand{\veps}{\varepsilon}
\newcommand{\vphi}{\varphi}
\newcommand{\eins}{\mathbb{1}}
\newcommand{\LG}[1]{\mathrm{#1}}
\newcommand{\Lg}[1]{\mathfrak{#1}}
\DeclareMathSymbol{\mydiv}{\mathbin}{symbols}{"04}
\DeclareMathOperator{\Diag}{Diag}
\DeclareMathOperator{\Span}{span}
\DeclareMathOperator{\dexp}{dexp}
\DeclareMathOperator{\Exp}{Exp}
\DeclareMathOperator{\expm}{expm}
\title[Geometric Numerical Integration of the Assignment Flow]{Geometric Numerical Integration of the Assignment Flow}
\author[A.~Zeilmann, F.~Savarino, S.~Petra, C.~Schn\"{o}rr]{Alexander Zeilmann, Fabrizio Savarino, Stefania Petra, Christoph Schn\"{o}rr}
\address[A.~Zeilmann]{Image and Pattern Analysis Group, Heidelberg University, Germany}
\email{alexander.zeilmann@iwr.uni-heidelberg.de}
\address[F.~Savarino]{Image and Pattern Analysis Group, Heidelberg University, Germany}
\email{fabrizio.savarino@iwr.uni-heidelberg.de}
\address[S.~Petra]{Mathematical Imaging Group, Heidelberg University, Germany}
\email{petra@math.uni-heidelberg.de}
\urladdr{\url{https://www.stpetra.com}}
\address[C.~Schn\"{o}rr]{Image and Pattern Analysis Group, Heidelberg University, Germany}
\email{schnoerr@math.uni-heidelberg.de}
\urladdr{\url{https://ipa.math.uni-heidelberg.de}}
\date{}
\keywords{image labeling, assignment flow, assignment manifold, geometric integration, adaptive step size selection, Krylov subspace approximation}
\begin{document}

\begin{abstract}
The \textit{assignment flow} is a smooth dynamical system that evolves on an elementary statistical manifold and performs contextual data labeling on a graph. We derive and introduce the \textit{linear assignment flow} that evolves nonlinearly on the manifold, but is governed by a linear ODE on the tangent space. Various numerical schemes adapted to the mathematical structure of these two models are designed and studied, for the geometric numerical integration of  both flows: embedded Runge-Kutta-Munthe-Kaas schemes for the nonlinear flow, adaptive Runge-Kutta schemes and exponential integrators for the linear flow. All algorithms are parameter free, except for setting a tolerance value that specifies adaptive step size selection by monitoring the local integration error, or fixing the dimension of the Krylov subspace approximation. These algorithms provide a basis for applying the assignment flow to machine learning scenarios beyond supervised labeling, including unsupervised labeling and learning from controlled assignment flows.
\end{abstract}

\maketitle
\vspace{-0.5cm}
\tableofcontents

\newpage
\section{Introduction}
\label{sec:Introduction}

\textbf{Overview, motivation.} The \textit{assignment flow}, recently introduced by \cite{Astrom:2017ac} and detailed in Section \ref{sec:AssignmentFlow}, denotes a \textit{smooth dynamical system} evolving on an elementary statistical manifold, for the contextual classification of a finite set of data given on an arbitrary graph. 
Vertices of the graph are associated with the elements of the data set and correspond to locations in space and/or time in typical applications. \textit{Classification} means to assign each datum exactly to one class representative, called \textit{label}, out of a finite set of predetermined labels. \textit{Contextual} classification means that these decision directly depend on each other, as encoded by the edges (adjacency relation) of the underlying graph. In the context of image analysis, classifying given image data on a regular grid graph in this way is called the \textit{image labeling problem}. We point out, however, that the assignment flow applies to arbitrary data represented on a graph.

A key property of the assignment flow is that decision variables do \textit{not} live in the space used to model the data. Rather, a probability simplex is associated with each datum, on which a flow evolves until it converges to one of the vertices of the simplex that encode the labels. Each simplex is equipped with the Fisher-Rao metric which turns the relative interior of the simplex into a smooth Riemannian manifold. It is this particular geometry that effectively promotes discrete decisions that interact in a \textit{smooth} way. Replacing in addition the Riemannian (Levi Civita) connection by the $\alpha$-connection (with $\alpha=1$) introduced by Amari and Chentsov \cite{Amari:2000aa}, enables to carry out basic geometric operations in a computationally \textit{efficient} way. Keeping  the assignment flow as `inference engine' separate from the data space and model allows to flexibly apply it to a broad range of contextual data classification problems. We refer to \cite{Amari:2000aa,Ay:2017aa} as basic texts on information geometry and to \cite{Kappes:2015aa} for more information on the image labeling problem. 

From a more distant viewpoint, our work ties in with the recent trend to explore the mathematics of deep networks from a dynamical systems perspective \cite{E:2017aa}. A frequently cited paper in this respect is \cite{He:2016aa} where a connection was made between the so-called residual architecture of networks and explicit Euler integration steps of a corresponding system of nonlinear \textit{ordinary differential equations (ODEs)}. We refer to \cite{Haber:2017aa} for a good exposition. While this offers a novel and fresh perspective on the \textit{learning problem} of network parameters, it does not alter the basic ingredients of such networks that apparently have been adopted in an ad-hoc way, like parametrized static layers connected by nonlinear transition functions, ReLU activations etc. 

By contrast, the assignment flow provides a smooth dynamical system on a graph (network), where all ingredients coherently fit into the overall mathematical framework. Based on this, we recently showed how discrete graphical models for image labeling can be evaluated using the assignment flow \cite{Huhnerbein:2018aa}, and how \textit{unsupervised} labeling can be modeled by coupling the assignment flow and Riemannian gradient flows for label evolution on feature manifolds \cite{Zern:2018ac}. Our current work, to be reported elsewhere, studies machine learning problems based on \textit{controlling} the assignment flow. Here, in particular, \textit{algorithms} play a decisive role that accurately \textit{integrate the assignment flow numerically on the manifold} where it evolves. A thorough study of such algorithms is the subject of the present paper.

\vspace{0.2cm}
\noindent
\textbf{Contribution, organization.} This paper presents two interrelated contributions, as illustrated by Figure \ref{fig:paperStructure}. 
\begin{enumerate}[(1)]
\item 
We derive from the assignment flow -- henceforth called \textit{nonlinear assignment flow} -- the \textit{linear assignment flow}, that still is nonlinear but governed by a \textit{linear} ODE on the tangent space. This property is attractive for modeling tasks (e.g.~parameter estimation and control) as well as for the design of numerical algorithms. In particular, our experiments show that the linear flow closely approximates the nonlinear flow, as far as concerns the final \textit{labeling} results.
\item
We study a range of algorithms for numerically integrating both the nonlinear and the linear assignment flow, respectively, while properly taking into account the underlying \textit{geometry}.
\begin{enumerate}
\item Regarding the \textit{nonlinear} assignment flow, we adopt the machinery of Lie group methods for the numerical integration of ODEs on manifolds \cite{Iserles:2005aa} and devise corresponding extensions of classical Runge-Kutta schemes, called \textit{RKMK schemes} (Runge-Kutta-Munthe-Kaas) in the literature \cite{Munthe-Kaas:1999aa}. We combine pairs of these extensions to form \textit{embedded} RKMK schemes for adaptive step size control, analogous to classical embedded Runge-Kutta (RK) schemes \cite{Hairer:2008aa}.
\item
Regarding the \textit{linear} assignment flow, we take advantage in two alternative ways of the \textit{linearity} of the flow on the tangent space.
\begin{enumerate}
\item On the one hand, we derive a local error estimate in order to apply classical Runge-Kutta schemes \cite{Hairer:2008aa} on the tangent space, with step sizes that adapt automatically.
\item
On the other hand, we evaluate the integral representation of the linear flow, due to Duhamels formula, and approximately evaluate this integral using Krylov subspace methods, as has been developed in the literature on exponential integrators \cite{Saad:1992aa,Hochbruck:1997aa,Hochbruck:2010aa}.
\end{enumerate}
\end{enumerate}
\end{enumerate}
All these \textit{explicit} numerical schemes are evaluated and discussed in Section \ref{sec:Experiments}, using `ground truth' flows as a baseline that were computed using the \textit{implicit} geometric Euler scheme with a sufficiently small step size. All algorithms are parameter free, except for specifying a single tolerance value with respect to the local error, that governs adaptive step size selection. Our experiments indicate a value for this parameter that `works' regarding integration accuracy and labeling quality, but is not too conservative (i.e.~small). In the case of the exponential integrator, we merely have to supply the final point of time $T$ at which the linear assignment flow should be evaluated, in addition to the dimension of the Krylov subspace which controls the quality of the approximation. We conclude with a synopsis of our results in Section \ref{sec:Conclusion}.

\begin{figure}
\centerline{
\includegraphics[width=0.5\textwidth]{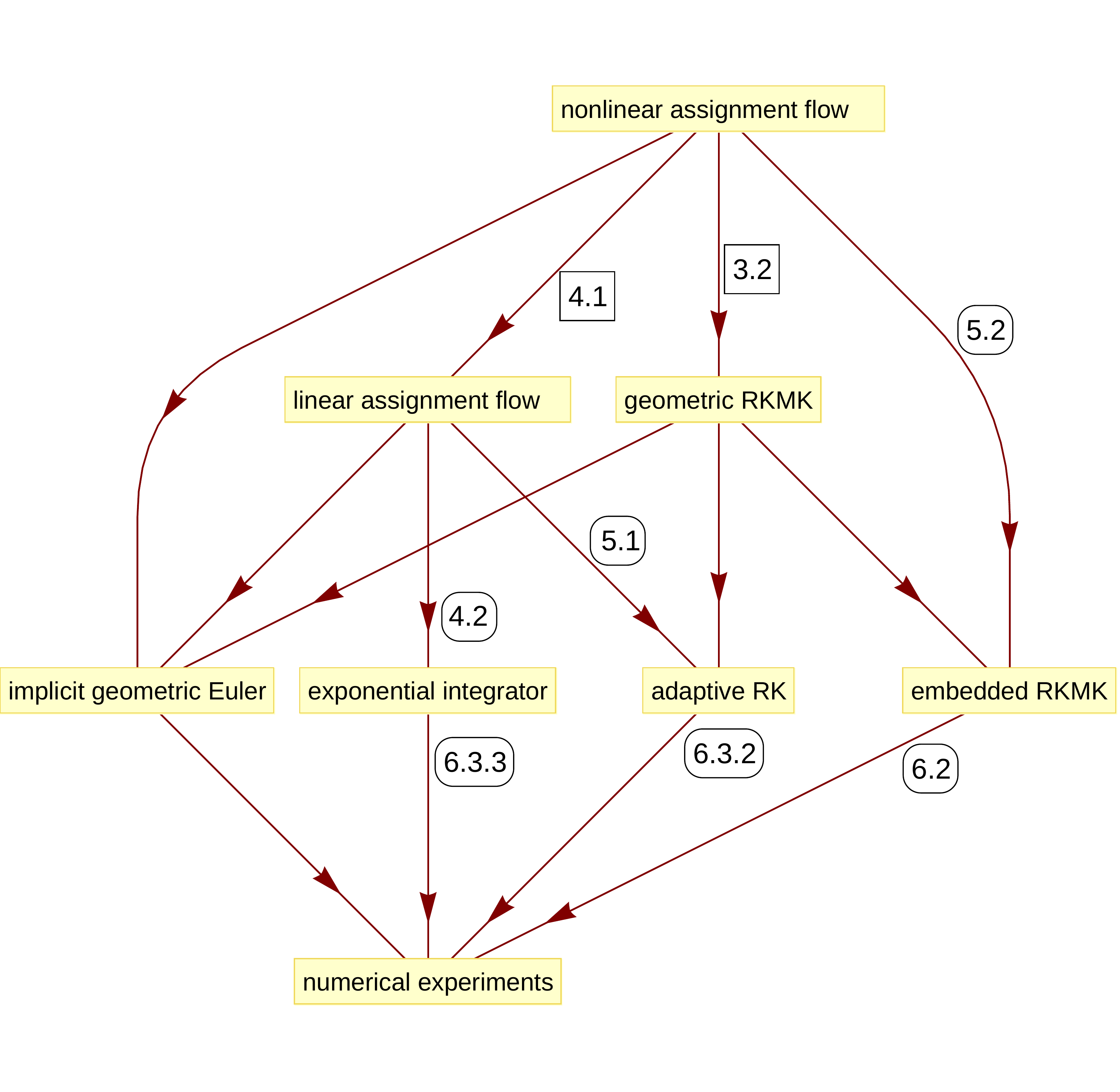}
}
\caption{
Topics addressed in this paper and their interrelations. Edge labels refer to the corresponding sections. Section numbers framed by squares address modeling aspects, whereas those framed by rounded squares address the design of algorithms and their numerical evaluation. Unlabelled edges mean `is derived from' or `provides the basis for'.
}
\label{fig:paperStructure}
\end{figure}

\vspace{0.25cm}
\noindent
\textbf{Basic notation.} Index sets $I$ and $J$ index vertices $i \in I$ of the underlying graph and labels $j \in J$, respectively. $\mc{S}$ and $\mc{W}$ denote the basic statistical manifolds that we work with, defined in Section \ref{sec:AssignmentFlow}. Points $p,q \in \mc{S}$ are strictly positive probability vectors, and we denote efficiently by
\[
q p = (q_{1} \cdot p_{1},\dotsc, q_{|J|} \cdot p_{|J|})^{\T},
\qquad\qquad
\frac{q}{p} = \Big(\frac{q_{1}}{p_{1}},\dotsc,\frac{q_{|J|}}{p_{|J|}}\Big)
\]
componentwise multiplication for general vectors, and componentwise subdivision only if $p \in \mc{S}$. Likewise, functions like the exponential function and the logarithm with vectors as arguments apply componentwise,
\[
e^{v} = (e^{v_{1}},e^{v_{2}},\dotsc)^{\T},
\qquad\qquad
\log v = (\log v_{1},\log v_{2},\dotsc)^{\T}.
\]
$\Exp$ and $\exp$ denote exponential mappings defined in Section \ref{sec:AssignmentFlow}, whereas $\expm$ denotes the \textit{matrix} exponential in Section \ref{sec:Exponential-Integrator}. The ordinary exponential function defined on the real line $\R$ is always denoted by $e^{x},\,x \in \R$.

$\eins=(1,\dotsc,1)^{\T}$ denotes the constant $1$-vector with appropriate number of components depending on the context. We use the common shorthand $[n]=\{1,2,\dotsc,n\}$ with $n \in \N$. $\|\cdot\|$ denotes the $\ell_{2}$-norm and $\|\cdot\|_{p}$ the $\ell_{p}$-norm if $p \neq 2$.

\section{The Assignment Flow}
\label{sec:AssignmentFlow}

We summarize the assignment flow introduced by \cite{Astrom:2017ac} and related concepts required in this paper. 
Let $G=(I,E)$ be a given undirected graph and let
\begin{subequations}\label{eq:def-FI}
\begin{align}
\mc{F}_{I} 
&= \big\{f_{i} \colon i \in I\big\} \subset \mc{F}
\intertext{
be data given in a metric space 
}
&(\mc{F},d). 
\end{align}
\end{subequations}
We call $\mc{F}_{I}$ \textit{image data} even though the $f_{i}$ typically represent \textit{features} extracted from raw image at pixel $i \in I$ as a preprocessing step. $G$ may be a regular grid graph as in low-level image processing or a less structured graph, with arbitrary connectivity in terms of the neighborhoods
\begin{equation}
\mc{N}_{i} = \{k \in I \colon ik=ki \in E\}.
\end{equation}
We associate with each neighborhood $\mc{N}_{i}$ weights satisfying
\begin{equation}\label{eq:def-weights}
w_{ik} > 0,\qquad
\sum_{k \in \mc{N}_{i}} w_{ik} = 1,\qquad \forall i \in I.
\end{equation}
These weights parametrize the regularization property of the assignment flow and are assumed to be given. How to learn them from data in order to control the assignment flow will be reported elsewhere.

Along with $\mc{F}_{I}$ we assume prototypical data 
\begin{equation}\label{eq:def-GJ}
\mc{G}_{J} = \big\{ g_{j} \in \mc{F} \colon j \in J\big\}
\end{equation}
to be given, henceforth called \textit{labels}. Each label $g_{j}$ represents the data of class $j$. \textit{Image labeling} denotes the problem to assign class labels to image data depending on the local context encoded by the graph $G$. We refer to \cite{Huhnerbein:2018aa} for more details and background on the image labeling problem.

Assignment of labels to data are represented by discrete probability distributions
\begin{equation}\label{eq:def-Wi}
W_{i} = (W_{i1},\dotsc,W_{i|J|})^{\T} \in \mc{S},\quad i \in I,
\end{equation}
where 
\begin{equation}\label{eq:def-S}
\mc{S} = \big\{p \in \R^{|J|} \colon p_{j}>0,\, j \in J,\; \la\eins,p \ra=1\big\}
\end{equation}
denotes the relatively open probability simplex equipped with the Fisher-Rao metric
\begin{equation}\label{eq:def-gp-T0}
g_{p}(u,v) = \sum_{j \in J} \frac{u_{j} v_{j}}{p_{j}},\qquad
u, v \in T_{0} = \{p \in \R^{|J|} \colon \la \eins, p \ra = 0\},\qquad p \in \mc{S},
\end{equation}
which turns $\mc{S}$ into a Riemannian manifold, with tangent spaces $T_{p}\mc{S} = T_{0}$ that do not depend on the base point $p$. In connection with $\mc{S}$, we define the 
\begin{equation}
\eins_{\mc{S}} = \frac{1}{|J|}(1,\dots,1)^{\T} \in \R^{|J|}
\qquad\qquad(\textbf{barycenter})
\end{equation}
of $\mc{S}$, i.e.~the uniform distribution, the orthogonal projection 
\begin{subequations}
\begin{align}\label{eq:def-Pi-T0}
\Pi_{T_{0}} &\colon \R^{|J|} \to T_{0}, &
\Pi_{T_{0}}(z) &= \big(\Diag(\eins)-\eins\eins_{\mc{S}}^{\T}\big) z,
\intertext{
and the linear mapping
}\label{eq:def-Pi-p}
\Pi_{p} &\colon \R^{|J|} \to T_{0}, &
\Pi_{p}(z) &= \big(\Diag(p)-p p^{\T}\big) z,\qquad p \in \mc{S}
\end{align}
\end{subequations}
satisfying
\begin{equation}\label{eq:Pi-p-T0-commute}
\Pi_{p} = \Pi_{p} \Pi_{T_{0}} = \Pi_{T_{0}} \Pi_{p}.
\end{equation}
Adopting the $\alpha$-connection with $\alpha=1$ from information geometry as introduced by Amari and Chentsov \cite[Section 2.3]{Amari:2000aa}, \cite{Ay:2017aa}, the exponential map based on the corresponding affine geodesics reads 
\begin{subequations}
\begin{align}\label{eq:exp-e-connection}
\Exp &\colon \mc{S} \times T_{0} \to \mc{S}, &
(p,v) &\mapsto \Exp_{p}(v) = \frac{e^{\frac{v}{p}}}{\la p, e^{\frac{v}{p}}\ra} p
\intertext{with inverse \cite[Appendix]{Astrom:2017ac}}
\label{eq:def-invexp}
\Exp^{-1} &\colon \mc{S} \times \mc{S} \to T_{0}, &
(p,q) &\mapsto \Exp_{p}^{-1}(q) = \Pi_{p}\log\frac{q}{p}.
\end{align}
\end{subequations}
Specifically, we define
\begin{subequations}
\begin{align}\label{eq:def-exp-1}
\exp_{p} 
&= \Exp_{p} \circ \Pi_{p} \colon \R^{|J|} = T_{0}\oplus\R\eins\to \mc{S}, &
z &\mapsto \frac{p e^{z}}{\la p, e^{z}\ra},\qquad \forall p \in \mc{S}
\intertext{with inverse \cite[Appendix]{Astrom:2017ac}}
\label{eq:def-invexp-eins}
\exp_{p}^{-1} &\colon \mc{S} \to T_{0}, &
q &\mapsto \Pi_{T_{0}}\log\frac{q}{p}.
\end{align}
\end{subequations}
\begin{remark}\label{rem:exp-T0}
Calling \eqref{eq:def-invexp-eins} the ``inverse'' map is justified by the fact that $\exp_{p}$ does not depend on any constant component $\R\eins \in T_{0}^{\perp}$ of the argument vector $z$. Yet, we choose $\R^{|J|}$ as domain because $\exp_{p}$ will be applied to arbitrary distance vectors $D_{i} \in \R^{|J|}$ (cf.~\eqref{eq:def-Di}) arising from given data, and the notation indicates that the implementation does not need to remove this component explicitly \cite[Remark 4]{Astrom:2017ac}.
\end{remark}
These mappings naturally extend to the collections of assignment vectors \eqref{eq:def-Wi}, regarded as points on the
\begin{equation}
\mc{W} = \mc{S} \times \dotsb \times \mc{S} \qquad(|I|\; \text{times})\qquad\qquad(\textbf{assignment manifold})
\end{equation}
with tangent space
\begin{equation}
\mc{T}_{0} = T_{0} \times \dotsb \times T_{0} \qquad(|I|\; \text{times})
\end{equation}
and the corresponding mappings
\begin{subequations}\label{eq:assignment-flow-maps}
\begin{align}
\eins_{\mc{W}} 
&= (\eins_{\mc{S}},\dotsc,\eins_{\mc{S}}) \in \mc{W}
&& (\textbf{barycenter})
\\ 
\Pi_{\mc{T}_{0}}(Z) 
&= \big(\Pi_{T_{0}}(Z_{1}),\dotsc,\Pi_{T_{0}},(Z_{|I|})\big) \in \mc{T}_{0}, &
W &\in \mc{W},\quad Z \in \R^{|I||J|}
\\ \label{eq:assignment-flow-maps-Pi-W}
\Pi_{W}(Z) 
&= \big(\Pi_{W_{1}}(Z_{1}),\dotsc,\Pi_{W_{|I|}},(Z_{|I|})\big) \in \mc{T}_{0}, &
W &\in \mc{W},\quad Z \in \R^{|I||J|}
\\ \label{eq:exp-e-W}
\Exp_{W}(V) 
&= \big(\Exp_{W_{1}}(V_{1}),\dotsc,\Exp_{W_{|I|}}(V_{|I|})\big) \in \mc{W}, &
W &\in \mc{W},\quad V \in \mc{T}_{0}
\end{align}
\end{subequations}
and $\exp_{W}, \Exp^{-1}_{W}, \exp^{-1}_{W}$ similarly defined based on \eqref{eq:def-invexp}, \eqref{eq:def-exp-1} and \eqref{eq:def-invexp-eins}.
Finally, we define the 
\textit{geometric mean} of assignment vectors \cite[Lemma 5]{Astrom:2017ac}
\begin{equation}\label{eq:def-geom-mean}
\mc{G}^{w}_{i}(W) = \Exp_{W_{i}} \Big(\sum_{k \in \mc{N}_{i}} w_{ik} \Exp_{W_{i}}^{-1}(W_{k})\Big)
= \exp_{W_{i}}\Big(\log\frac{\prod_{k \in \mc{N}_{i}} W_{k}^{w_{ik}}}{W_{i}}\Big),\qquad i \in I.
\end{equation}

\vspace{0.2cm}
Using this setting, the assignment flow accomplishes image labeling as follows. Based on \eqref{eq:def-FI}, \eqref{eq:def-GJ}, distance vectors
\begin{subequations}\label{eq:def-Di}
\begin{align}
D 
&= (D_{1},\dotsc,D_{|I|}) \in \R^{|I||J|}
\qquad\qquad\qquad\qquad
(\textbf{distance vectors})
\\
D_{i}
&=\big(d(f_{i},g_{1}),\dotsc,d(f_{i},g_{|J|})\big)^{\T},\quad i \in I
\end{align}
\end{subequations}
are defined and mapped to
\begin{subequations}\label{eq:def-LW}
\begin{align}
L(W) &= \exp_{W}(D) \in \mc{W},
\qquad\qquad\qquad\qquad\qquad
(\textbf{likelihood vectors})
\\ \label{eq:def-LiWi}
L_{i}(W_{i}) 
&:= \frac{W_{i} e^{-\frac{1}{\rho} D_{i}}}{\la W_{i},e^{-\frac{1}{\rho} D_{i}} \ra},\quad \rho > 0,\qquad i \in I,
\end{align}
\end{subequations}
where $\rho$ is a user parameter to normalize the distances induced by the specific features $f_{i}$ at hand. This representation of the data is regularized by local geometric smoothing to obtain
\begin{equation}\label{eq:def-SW}
S(W) \in \mc{W},\qquad
S_{i}(W) = \mc{G}^{w}_{i}\big(L(W)\big),\quad i \in I,
\qquad(\textbf{similarity vectors})
\end{equation}
which in turn evolves the assignment vectors $W_{i},\, i \in I$ through the
\begin{equation}\label{eq:assignment-flow}
\dot W = \Pi_{W}\big(S(W)\big),\qquad
W(0) = \eins_{\mc{W}}.
\qquad\qquad\qquad (\textbf{assignment flow})
\end{equation}
Methods for numerically integrating this flow are examined in the following sections.

\section{Geometric Runge-Kutta Integration}
\label{sec:Runge-Kutta}

We apply the general approach of \cite{Munthe-Kaas:1999aa} to our problem. For background and more details, we refer to 
\cite{Iserles:2005aa} and \cite[Chapter 4]{Hairer:2006aa}.
\subsection{General Approach}
\label{sec:Lie-General-Approach}
In order to apply Lie group methods to the integration of an ODE on a manifold $\mc{M}$, one has to check first if the ODE can be represented properly. Let  
\begin{subequations}\label{eq:Lambda-properties}
\begin{align}
\Lambda \colon \LG{G} \times \mc{M} &\to \mc{M}
\intertext{
denote the action of a Lie group $\LG{G}$ on $\mc{M}$ satisfying
}
\Lambda(e,p) &= p 
&&\text{with identity $e \in \LG{G}$},
\\
\Lambda(g_{1}\cdot g_{2},p) &= \Lambda(g_{1},\Lambda(g_{2},p)),
&&\text{for all}\;
g_{1},g_{2} \in \LG{G},\; p \in \mc{M}.
\end{align}
\end{subequations}
Furthermore, let $\Lg{g}$ denote the Lie algebra of $\LG{G}$, $\Lg{X}(\mc{M})$ the set of all smooth vector fields on $\mc{M}$, 
\begin{equation}
\lambda \colon \Lg{g}\times\mc{M} \to \mc{M}
\end{equation}
a smooth function and $\lambda_{\ast}$ the induced map defined by
\begin{equation}\label{eq:def-lambda-ast-a}
\lambda_{\ast} \colon \Lg{g} \to \Lg{X}(\mc{M}),
\qquad\qquad
(\lambda_{\ast} v)_{p} = \dv{t}\lambda(t v,p)\big|_{t=0}
\qquad\text{for all}\; v \in \Lg{g},\; p \in \mc{M}.
\end{equation}
Then $\lambda$ is a Lie algebra action if the induced map $\lambda_{\ast}$ is a Lie algebra homomorphism, i.e.~$\lambda_{\ast}$ is linear and satisfies $\lambda_{\ast}[u,v]=[\lambda_{\ast}u,\lambda_{\ast}v],\, u,v \in \Lg{g}$, with the Lie brackets on $\Lg{g}$ and $\Lg{X}(\mc{M})$ on the left-hand side and the right-hand side, respectively. In particular, based on a Lie group action $\Lambda$, a Lie algebra action is given by \cite[Lemma 4]{Munthe-Kaas:1999aa}
\begin{equation}\label{eq:lambda-by-Lambda}
\lambda(v,p) = \Lambda(\exp_{\LG{G}}(v),p),
\end{equation}
where $\exp_{\LG{G}} \colon \Lg{g} \to \LG{G}$ denotes the exponential map of $\LG{G}$. Thus, for this choice of $\lambda$, the induced map \eqref{eq:def-lambda-ast-a} is given by \cite[Thm.~5]{Munthe-Kaas:1999aa}
\begin{equation}\label{eq:lambda-ast-Lambda}
(\lambda_{\ast} v)_{p} = \dv{t}\Lambda(\exp_{\LG{G}}(t v),p)\big|_{t=0} \qquad\text{for all}\; v \in \Lg{g},\; p \in \mc{M}.
\end{equation}
Now, given an ODE on $\mc{M}$, the \textit{basic assumption} underlying the application of Lie group methods is the existence of a function $f \colon \R \times \mc{M} \to \Lg{g}$ such that the ODE admits the representation
\begin{equation}\label{eq:dot-y-on-M}
\dot y = \big(\lambda_{\ast} f(t,y)\big)_{y},\qquad y(0)=p.
\end{equation}
For sufficiently small $t$, the solution of \eqref{eq:dot-y-on-M} then can be parametrized as
\begin{subequations}\label{eq:dot-y-on-g}
\begin{align}
y(t) &= \lambda\big(v(t),p\big),
\intertext{
where $v(t) \in \Lg{g}$ satisfies the ODE
}\label{eq:dot-y-on-g-b}
\dot v &= (\dexp_{\LG{G}}^{-1})_{v}\big(f(t,\lambda(v,p))\big),\qquad v(0)=0,
\end{align}
\end{subequations}
with the inverse differential $(\dexp_{\LG{G}}^{-1})_{v}$ of $\exp_{\LG{G}}$ evaluated at $v \in \Lg{g}$. A major advantage of the representation \eqref{eq:dot-y-on-g} is that the task of numerical integration concerns the ODE \eqref{eq:dot-y-on-g-b} evolving on the vector space $\Lg{g}$, rather than the original ODE evolving on the manifold $\mc{M}$. As a consequence, established methods can be applied to \eqref{eq:dot-y-on-g-b}, possibly after approximating $\dexp_{\LG{G}}^{-1}$ by a truncated series in a computationally feasible form.

\subsection{Application to the Assignment Flow}
\label{sec:Lie-Assignment-Flow}
Assume an ODE on $\mc{S}$ defined by \eqref{eq:def-S} is given. The application of the approach of Section \ref{sec:Lie-General-Approach} is considerably simplified by identifying $\LG{G} = T_{0}$ with the \textit{flat} tangent space \eqref{eq:def-gp-T0} and consequently also $T_{0} \cong \Lg{g} = T_{e}\LG{G}$. One easily verifies that the action $\Lambda \colon T_{0} \times \mc{S} \to \mc{S}$ defined as
\begin{equation}\label{eq:Lambda-simplex}
\Lambda(v,p) = \exp_{p}(v),
\end{equation}
with the right-hand side given by \eqref{eq:def-exp-1}, satisfies \eqref{eq:Lambda-properties}, i.e.
\begin{subequations}\label{eq:Lambda-vp-rules}
\begin{align}
\Lambda(0,p) &= p,
\\
\Lambda(v_{1}+v_{2},p) &= \Lambda(v_{1},\Lambda(v_{2},p))
= \frac{p e^{v_{1}+v_{2}}}{\la p, e^{v_{1}+v_{2}} \ra}.
\end{align}
\end{subequations}
\begin{proposition}
The solution $W(t)$ to assignment flow \eqref{eq:assignment-flow} emanating from any $W_{0}=W(0)$ admits the representation
\begin{subequations}\label{eq:ass-flow-parametrization}
\begin{align}\label{eq:ass-flow-W-by-V}
W(t) &= \exp_{W_{0}}\big(V(t)\big)
\intertext{
where $V(t) \in \mc{T}_{0}$ solves
}\label{eq:ass-flow-dot-V}
\dot V &= \Pi_{\mc{T}_{0}} S\big(\exp_{W_{0}}(V)\big),\qquad V(0)=0.
\end{align}
\end{subequations}
\end{proposition}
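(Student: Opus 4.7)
The plan is to apply the Lie group framework of Section \ref{sec:Lie-General-Approach} with the \emph{abelian additive} Lie group $\LG{G} = \mc{T}_{0}$ (componentwise on $\mc{W}$), whose Lie algebra $\Lg{g}$ is naturally identified with $\mc{T}_{0}$ itself and whose Lie group exponential $\exp_{\LG{G}}$ is the identity map. Taking
\begin{equation*}
\Lambda(V,W) = \exp_{W}(V), \qquad V \in \mc{T}_{0},\; W \in \mc{W},
\end{equation*}
I would first verify that $\Lambda$ satisfies the group action axioms \eqref{eq:Lambda-properties} by extending the scalar identities \eqref{eq:Lambda-vp-rules} componentwise. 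This provides the Lie algebra action $\lambda(V,W) = \Lambda(V,W) = \exp_{W}(V)$ via \eqref{eq:lambda-by-Lambda}.

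Next I would compute the induced map $\lambda_{\ast}$ using \eqref{eq:lambda-ast-Lambda}: differentiating $\exp_{p}(tv) = \tfrac{p e^{tv}}{\la p, e^{tv}\ra}$ at $t=0$ (with $\exp_G(tv) = tv$) yields
\begin{equation*}
(\lambda_{\ast} v)_{p} = \big(\Diag(p) - p p^{\T}\big) v = \Pi_{p}(v),
\end{equation*}
and hence $(\lambda_{\ast} V)_{W} = \Pi_{W}(V)$ on the product manifold. Now choose $f(t,W) := \Pi_{\mc{T}_{0}} S(W) \in \mc{T}_{0}$. Using the commutation identity $\Pi_{W} = \Pi_{W}\Pi_{\mc{T}_{0}}$ from \eqref{eq:Pi-p-T0-commute}, one obtains
\begin{equation*}
\big(\lambda_{\ast} f(t,W)\big)_{W} = \Pi_{W}\big(\Pi_{\mc{T}_{0}} S(W)\big) = \Pi_{W}\big(S(W)\big),
\end{equation*}
so the assignment flow \eqref{eq:assignment-flow} has exactly the form \eqref{eq:dot-y-on-M} required to invoke the parametrization \eqref{eq:dot-y-on-g}.

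Finally I would specialize \eqref{eq:dot-y-on-g} to the present abelian/flat setting. Since $\LG{G}$ is a vector space, $\exp_{\LG{G}}$ and therefore $\dexp_{\LG{G}}^{-1}$ are the identity; consequently \eqref{eq:ass-flow-W-by-V} becomes $W(t) = \lambda(V(t), W_{0}) = \exp_{W_{0}}(V(t))$, and \eqref{eq:ass-flow-dot-V} reduces to
\begin{equation*}
\dot V = f\big(t, \lambda(V,W_{0})\big) = \Pi_{\mc{T}_{0}} S\big(\exp_{W_{0}}(V)\big), \qquad V(0) = 0,
\end{equation*}
which is precisely the claim; the initial condition is consistent because $\exp_{W_{0}}(0) = W_{0}$. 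I do not expect a genuine obstacle here: the proof is an unpacking of definitions, the only non-routine step being the derivative computation giving $(\lambda_{\ast} v)_{p} = \Pi_{p}(v)$, and the main conceptual point is recognizing that the flatness of $\mc{T}_{0}$ trivializes the $\dexp^{-1}$ correction in \eqref{eq:dot-y-on-g-b}.
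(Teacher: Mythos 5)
Your proposal is correct and follows essentially the same route as the paper's own proof: identify $\LG{G}=\mc{T}_{0}$ as a flat abelian Lie group so that $\exp_{\LG{G}}$ and hence $\dexp_{\LG{G}}^{-1}$ are the identity, compute $(\lambda_{\ast}v)_{p}=\Pi_{p}(v)$ by differentiating $\exp_{p}(tv)$ at $t=0$, take $f(t,W)=\Pi_{\mc{T}_{0}}S(W)$, and use \eqref{eq:Pi-p-T0-commute} to recover the assignment flow in the form \eqref{eq:dot-y-on-M}. No gaps.
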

\begin{proof}
Since geodesics through $0 \in T_{0}$ in directions $v \in T_{0}$ have the form $\gamma(t) = t v$, the differential of the exponential map of $T_{0}=\LG{G}$, $\exp_{T_{0}}(v) = \gamma(1) = v$, is the identity and thus \eqref{eq:lambda-ast-Lambda} gives
\begin{equation}
(\lambda_{\ast} v)_{p} 
= \dv{t}\Lambda\big(\gamma(t),p\big)\big|_{t=0}
= \Pi_{p}(v),
\end{equation}
with $\Pi_{p}$ defined by \eqref{eq:def-Pi-p}. As a result, the basic assumption \eqref{eq:dot-y-on-M} concerns ODEs on $\mc{S}$ that admit the representation
\begin{equation}
\dot p = \Pi_{p}\big(f(t,p)\big),\qquad p(0)=p_{0},
\end{equation}
for some function $f \colon \R \times \mc{S} \to T_{0}$ and some $p_{0} \in \mc{S}$. Since $\lambda=\Lambda$ by \eqref{eq:lambda-by-Lambda}, the parametrization \eqref{eq:dot-y-on-g} reads
\begin{subequations}
\begin{align}
p(t) &= \Lambda\big(v(t),p_{0}\big)
\intertext{
where $v(t) \in T_{0}$ solves
}
\dot v &= f\big(t,\Lambda(v,p_{0})\big),\qquad v(0)=0.
\end{align}
\end{subequations}
This setting extends to the assignment flow by defining (cf.~\eqref{eq:assignment-flow-maps}) $
\Lambda \colon \mc{T}_{0} \times \mc{W} \to \mc{W}$ and $\lambda_{\ast} \colon \mc{T}_{0} \to \mc{T}_{0}$ as
\begin{equation}
\Lambda(V,W) = \exp_{W}(V),\qquad
\big(\lambda_{\ast}(V)\big)_{W} = \Pi_{W}(V).
\end{equation}
The basic assumption \eqref{eq:dot-y-on-M} then reads
\begin{equation}
\dot W = \big(\lambda_{\ast}f(t,W)\big)_{W}
= \Pi_{W}\big(\Pi_{\mc{T}_{0}} S(W)\big)
\overset{\eqref{eq:Pi-p-T0-commute}}{=}
\Pi_{W}\big(S(W)\big),\qquad W(0) = \eins_{\mc{W}},
\end{equation}
which is the assignment flow \eqref{eq:assignment-flow}. Due to \eqref{eq:dot-y-on-M}, for any $W_{0}=W(0)$, it admits the representation
\begin{subequations}
\begin{align}
W(t) &= \Lambda\big(V(t),W_{0}\big),
\intertext{
where $V(t) \in \mc{T}_{0}$ solves
}
\dot V 
&= f\big(t,\Lambda(V,W_{0})\big)
= \Pi_{\mc{T}_{0}}S(\exp_{W_{0}}(V)),\qquad V(0)=0,
\end{align}
\end{subequations}
which is \eqref{eq:ass-flow-parametrization}.
\end{proof}
\begin{remark}\label{rem:non-basic-cases}
While the basic formulation \eqref{eq:assignment-flow} of the assignment flow is \textit{autonomous}, we keep in what follows the explicit time dependency of the function $f(t,\cdot)$ of the parametrization \eqref{eq:ass-flow-parametrization}, because in more advanced scenarios the flow may become \textit{non-autonomous}. A basic example concerns \textit{unsupervised} problems \cite{Zern:2018ac} where labels \textit{vary}, and hence the distance vectors \eqref{eq:def-Di} and in turn the vector field defining the assignment flow depend on $t$.
\end{remark}
Using the above representation and taking into account the simplifications of the general approach of Section \ref{sec:Lie-General-Approach}, the \textbf{RKMK algorithm} \cite{Munthe-Kaas:1999aa} for integrating the assignment flow from $t=0$ to $t=h$ is specified as follows. Let $a_{i,j}, b_{j}$ be the coefficients of an $s$-stage, $q$-th order classical Runge-Kutta method satisfying the consistency condition $c_{i}=\sum_{j \in [s]} a_{i,j}$ \cite[Section II]{Hairer:2008aa}. Starting from any point 
\begin{subequations}\label{eq:RKMK-algorithm}
\begin{align}
W_{0} &= W(0),
\intertext{
the algorithm amounts to compute the vector fields
}
U^{i} &= h \sum_{j \in [s]} a_{i,j} \widetilde U^{j},\qquad i \in [s]
\\ \label{eq:RKMK-f-step}
\widetilde U^{i} &= f\big(h c_{i},\Lambda(U^{i},W_{0})\big),\qquad i \in [s]
\\ \label{eq:RKMK-V-update}
V &= h \sum_{j \in [s]} b_{j} \widetilde U^{j}
\intertext{
and the update
} \label{eq:RKMK-W-update}
W(h) &= \Lambda(V,W_{0}).
\end{align}
\end{subequations}
Replacing $W_{0} \leftarrow W(h)$, computing the update and iterating this procedure generates the sequence $(W^{(k)})_{k \geq 0}$ which approximates $(W(t_{k}))_{k \geq 0},\, t_{k} = k h$.

A \textit{$s$-stage RKMK scheme} is specified using the corresponding Butcher tableau of the form
\begin{center}
\begin{tabular}{c|ccccc}
$0$ & \\
$c_{2}$ & $a_{21}$ \\
$c_{3}$ & $a_{31}$ & $a_{32}$ \\
\vdots & \vdots & \vdots & $\ddots$ \\
$c_{s}$ & $a_{s1}$ & $a_{s2}$ & $\dotsb$ & $a_{s(s-1)}$ \\ \hline
& $b_{1}$ & $b_{2}$ & $\dotsb$ & $b_{s-1}$ & $b_{s}$
\end{tabular}
\end{center}
Specifically, we consider the following \textit{explicit RKMK schemes} or order $1,2,3,4$ (`FE' stands for Forward Euler):
\begin{center}
\parbox{0.175\textwidth}{\centering
\begin{tabular}{c|c}
$0$ & \\ \hline
& $1$
\end{tabular}
} \hfill
\parbox{0.175\textwidth}{\centering
\begin{tabular}{c|cc}
$0$ & \\ 
$1$ & $1$ \\ \hline
& $1/2$ & $1/2$
\end{tabular}
} \hfill
\parbox{0.25\textwidth}{\centering
\begin{tabular}{c|ccc}
$0$ & \\ 
$1/3$ & $1/3$ \\ 
$2/3$ & $0$ & $2/3$ \\ \hline
& $1/4$ & $0$ & $3/4$
\end{tabular}
} \hfill
\parbox{0.3\textwidth}{\centering
\begin{tabular}{c|cccc}
$0$ & \\ 
$1/2$ & $1/2$ \\ 
$1/2$ & $0$ & $1/2$ \\
$1$ & $0$ & $0$ & $1$ \\ \hline
& $1/6$ & $1/3$ & $1/3$ & $1/6$
\end{tabular}
}
\parbox{0.175\textwidth}{\centering
Euler (\textbf{FE})
} \hfill
\parbox{0.175\textwidth}{\centering
Heun-2 (\textbf{H2})
} \hfill
\parbox{0.25\textwidth}{\centering
Heun-3 (\textbf{H3})
} \hfill
\parbox{0.3\textwidth}{\centering
Classical RK (\textbf{RK4})
} 
\end{center}
Note the increasing number of stages that raise the approximation order. This comes at a price, however, because each stage evaluates at step \eqref{eq:RKMK-f-step} the right-hand side of \eqref{eq:ass-flow-dot-V} which is the most expensive operation. As a consequence, it is not clear a priori if using a a multi-stage scheme and a larger step size $h$ is superior to a simpler scheme that is evaluated more frequently using a smaller step size.

In addition to the above explicit schemes, we consider the simplest \textit{implicit RKMK scheme} (`BE' stands for Backward Euler)
\begin{center}
\parbox{0.175\textwidth}{\centering
\begin{tabular}{c|c}
$1$ & $1$ \\ \hline
& $1$
\end{tabular}
} \\
\parbox{0.175\textwidth}{\centering
Euler (\textbf{BE})
}
\end{center}
Implicit schemes are known to be stable for much larger step sizes. Yet, they require to solve at every step a fixed point equation which is done by an iterative inner loop.

The performances of these numerical schemes are examined in Section \ref{sec:Experiments}.

\section{Linear Assignment Flow, Exponential Integrator}
\label{sec:ExponentialIntegrators}

The ODE \eqref{eq:ass-flow-dot-V} which parametrizes the assignment flow together with \eqref{eq:ass-flow-W-by-V}, evolves on a linear space but is a \textit{nonlinear} system of ordinary differential equations. In this section, we provide an \textit{approximate representation} of the assignment flow within time intervals through a \textit{linear} ODE evolving on the tangent space (Section \ref{sec:linearized-flow}), and a corresponding numerical scheme (Section \ref{sec:Exponential-Integrator}). 

The resulting flow on the assignment \textit{manifold} is still nonlinear, though. The basic idea is to capture locally a major part of the nonlinearity of the (full) assignment flow, by a linear ODE on the tangent space that enables to apply alternative integration schemes.

\subsection{Linear Assignment Flow}
\label{sec:linearized-flow}

Our Ansatz has two ingredients. Firstly, we adopt the parametrization
\begin{equation}\label{eq:W-by-V-expW0}
W(t) = \Exp_{W_{0}}\big(V(t)\big),\qquad V(t) \in \mc{T}_{0}
\end{equation}
of the solution $W(t)$ to the assignment flow by a trajectory in the tangent space $\mc{T}_{0}$, similar to \eqref{eq:ass-flow-W-by-V}, except for using the `true' exponential map \eqref{eq:exp-e-connection} and \eqref{eq:exp-e-W}, respectively, corresponding to the underlying affine connection. Secondly, we use an \textit{affine approximation} of the vector field on the right-hand side of \eqref{eq:assignment-flow}, that defines the assignment flow. 
The following corresponding definition generalizes the flow studied by \cite{Ay:2005aa} from the barycenter to arbitrary base points $W_{0}$, and from a flow on $\mc{S}$ to a flow on $\mc{W}$.
\begin{definition}[\textbf{linear assignment flow}]
We call \textit{linear assignment flow} every flow induced by an ODE of the form
\begin{equation}\label{eq:ass-flow-linear}
\dot W = \Pi_{W}\Big(s_{0} + S_{0} \Pi_{W_{0}}\log\frac{W}{W_{0}}\Big),\qquad W(0) = W_{0} \in \mc{W},
\end{equation}
with a fixed vector $s_{0}$ and a fixed matrix $S_{0}$, for arbitrary $W_{0}$.
\end{definition}
An important property of the flow \eqref{eq:ass-flow-linear} -- which explains its name -- is the possibility to parametrize it by a \textit{linear} ODE evolving on the tangent space $\mc{T}_{0}$.
\begin{proposition}\label{prop:linear-ass-parametrization}
The linear assignment flow \eqref{eq:ass-flow-linear} admits the representation
\begin{subequations}\label{eq:linear-ass-parametrization}
\begin{align}\label{eq:W-ass-flow-linear}
W(t) &= \Exp_{W_{0}}\big(V(t)\big),
\intertext{
where $V(t) \in \mc{T}_{0}$ solves
}\label{eq:dot-V-ass-flow-linear}
\dot V &= \Pi_{W_{0}}(s_{0} + S_{0} V),\qquad V(0)=0.
\end{align}
\end{subequations}
\end{proposition}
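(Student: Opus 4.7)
The plan is to verify directly that, for $V(t)\in\mc{T}_{0}$ solving~\eqref{eq:dot-V-ass-flow-linear}, the curve $W(t):=\Exp_{W_{0}}(V(t))$ satisfies the ODE~\eqref{eq:ass-flow-linear} with $W(0)=W_{0}$; uniqueness for~\eqref{eq:ass-flow-linear} then yields the claim. The initial condition is immediate from $V(0)=0$ and $\Exp_{W_{0}}(0)=W_{0}$, so everything hinges on differentiating $W(t)=\Exp_{W_{0}}(V(t))$ in $t$ and matching the result to the right-hand side of~\eqref{eq:ass-flow-linear}.

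First I would re-invert the exponential. Applying~\eqref{eq:exp-e-connection} vertex-wise and taking componentwise logarithms gives
\[
\log\frac{W_{i}}{W_{0,i}} \;=\; \frac{V_{i}}{W_{0,i}} \;-\; \log\la W_{0,i},\, e^{V_{i}/W_{0,i}}\ra\,\eins.
\]
Since $\Pi_{W_{0,i}}(\eins)=0$ and $\Pi_{p}(z/p)=z-\la\eins,z\ra\,p$ (which I would verify by unpacking $\Pi_{p}=\Diag(p)-pp^{\T}$), the second term is annihilated and the first reduces to $V_{i}$ because $V_{i}\in T_{0}$. Thus $\Pi_{W_{0}}\log(W/W_{0})=V$; this identity lets me replace the nonlinear argument of~\eqref{eq:ass-flow-linear} by $V$ at the very end.

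Next I would differentiate $W_{i}=W_{0,i}\,e^{V_{i}/W_{0,i}}/\la W_{0,i}, e^{V_{i}/W_{0,i}}\ra$ in $t$. Recognizing the rescaled exponential factor on the right as $W_{i}$ itself, a short product/quotient computation yields
\[
\dot W_{i} \;=\; \Diag(W_{i})\,\frac{\dot V_{i}}{W_{0,i}} \;-\; W_{i}\,\Big\la W_{i},\,\frac{\dot V_{i}}{W_{0,i}}\Big\ra \;=\; \Pi_{W_{i}}\!\Big(\frac{\dot V_{i}}{W_{0,i}}\Big),
\]
where all divisions by $W_{0,i}$ are componentwise. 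Inserting the linear ODE $\dot V_{i}=\Pi_{W_{0,i}}\bigl((s_{0}+S_{0}V)_{i}\bigr)$ and using the elementary componentwise identity $\Pi_{p}(z)/p = z-\la p,z\ra\,\eins$, followed by $\Pi_{W_{i}}(\eins)=0$, produces the cancellation
\[
\Pi_{W_{i}}\!\Big(\frac{\Pi_{W_{0,i}}((s_{0}+S_{0}V)_{i})}{W_{0,i}}\Big) \;=\; \Pi_{W_{i}}\bigl((s_{0}+S_{0}V)_{i}\bigr).
\]
Reassembling over $i\in I$ and substituting $V=\Pi_{W_{0}}\log(W/W_{0})$ from the first step produces exactly the right-hand side of~\eqref{eq:ass-flow-linear}.

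The hard part will be the bookkeeping in the two displays above: one must carefully track how the componentwise division by $W_{0}$ interacts with the two projections $\Pi_{W_{0,i}}$ and $\Pi_{W_{i}}$. The algebraic fact that makes everything collapse is the identity $\Pi_{q}\bigl(\Pi_{p}(z)/p\bigr)=\Pi_{q}(z)$ for $p,q\in\mc{S}$, which is what allows the inner projector and the componentwise factor $1/W_{0,i}$ to be absorbed into the single outer projection $\Pi_{W_{i}}$ after differentiation. I do not anticipate smoothness or existence issues since all quantities remain in the relative interior of the simplex along the parametrized flow.
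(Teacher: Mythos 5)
Your proposal is correct. The only structural difference from the paper's proof is the direction of the argument: the paper starts from a solution $W(t)$ of \eqref{eq:ass-flow-linear}, sets $V(t)=\Exp_{W_{0}}^{-1}(W(t))=\Pi_{W_{0}}\log\tfrac{W(t)}{W_{0}}$, differentiates to get $\dot V=\Pi_{W_{0}}(\dot W/W)$, and then uses $\Pi_{W_{0}}\eins=0$ to collapse the right-hand side to $\Pi_{W_{0}}(s_{0}+S_{0}V)$ — no uniqueness argument is needed. You run the mirror-image computation: starting from $V$ solving \eqref{eq:dot-V-ass-flow-linear}, you differentiate $W=\Exp_{W_{0}}(V)$ to get $\dot W=\Pi_{W}(\dot V/W_{0})$, use $\Pi_{W}\eins=0$ to absorb the inner projector, and then must invoke uniqueness of solutions to \eqref{eq:ass-flow-linear} to conclude the representation. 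That extra appeal is legitimate (the vector field is smooth on $\mc{W}$, so Picard–Lindel\"{o}f applies, and the linear tangent ODE has a global solution), but it is an additional ingredient the paper avoids by working in the forward direction. The key algebraic identities you isolate — $\Pi_{p}(z/p)=z-\la\eins,z\ra p$, $\Pi_{p}(z)/p=z-\la p,z\ra\eins$, and $\Pi_{q}\bigl(\Pi_{p}(z)/p\bigr)=\Pi_{q}(z)$ — are exactly the cancellations driving the paper's computation as well, so the two proofs are computationally equivalent; yours just costs one more (routine) step.
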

\begin{proof}
Parametrization \eqref{eq:W-by-V-expW0} yields 
\begin{equation}\label{eq:proof-lin-ass-Vt}
V(t) = \Exp_{W_{0}}^{-1}(W(t))
\overset{\eqref{eq:def-invexp}}{=}
\Pi_{W_{0}}\log\frac{W_{(t)}}{W_{0}}
\end{equation}
and by differentiation
\begin{subequations}
\begin{align}
\dot V(t) &= \Pi_{W_{0}}\Big(\frac{\dot W(t)}{W(t)}\Big).
\intertext{
Solving \eqref{eq:ass-flow-linear} for $\frac{\dot W}{W}$ after inserting \eqref{eq:assignment-flow-maps-Pi-W}, and substitution in the preceding equation gives
}
&= \Pi_{W_{0}}\Big(s_{0} + S_{0} \Exp_{W_{0}}^{-1}\big(W(t)\big)
- \big\la W(t),s_{0} + S_{0} \Exp_{W_{0}}^{-1}\big(W(t)\big)\big\ra\eins\Big),
\intertext{
and since $\Pi_{W_{0}}\eins=0$ by \eqref{eq:def-Pi-p}
}
&= \Pi_{W_{0}}\Big(s_{0} + S_{0} \Exp_{W_{0}}^{-1}\big(W(t)\big)\Big)
\overset{\eqref{eq:proof-lin-ass-Vt}}{=} 
\Pi_{W_{0}}\big(s_{0} + S_{0} V(t)\big).
\end{align}
\end{subequations}
The initial condition follows from $V(0)=\Exp_{W_{0}}^{-1}(W_{0})=0$.
\end{proof}
\begin{remark}\label{rem:ass-flow-linear}
Note that, despite the linearity of \eqref{eq:dot-V-ass-flow-linear}, the resulting flow \eqref{eq:W-ass-flow-linear} solving \eqref{eq:ass-flow-linear} is \textit{nonlinear}. Thus, one may hope to capture the major nonlinearity of the full assignment flow \eqref{eq:assignment-flow} by a linear ODE on the tangent space, at least locally in some time interval. Within this interval, the evaluation of \eqref{eq:def-SW} is \textit{not} required, and the linearity of the tangent space ODE \eqref{eq:dot-V-ass-flow-linear} can be exploited for integration.
\end{remark}
We conclude this section by computing the natural choice
\begin{equation}\label{eq:s0-S0-natural}
s_{0} = S(W_{0}),\qquad
S_{0} = dS_{W_{0}}
\end{equation}
of the parameters of the linear assignment flow \eqref{eq:ass-flow-linear} in explicit form, where $s_{0}$ is immediate due to \eqref{eq:def-SW}, but the Jacobian $S_{0} = dS_{W_{0}}$ of $S(W)$,  evaluated at $W_{0}$, is not.
\begin{proposition}\label{prop:dS}
Let $S(W) \in \R^{|I||J|}$ denote the global similarity vector obtained by stacking the local similarity vectors $S_{1}(W),\dotsc,S_{|I|}(W)$ of \eqref{eq:def-SW}. Then, with
\begin{subequations}\label{eq:lem-dS}
\begin{align}\label{eq:def-s0}
s_{0} &= S(W_{0}),\qquad
s_{0i} = S_{i}(W_{0}),\qquad
s_{0i,j} = S_{ij}(W_{0}) = \big(S_{i}(W_{0})\big)_{j},
\quad i \in I,\; j \in J
\intertext{
and the projection $\Pi_{s_{0,i}}$ defined by \eqref{eq:def-Pi-p}, the Jacobian of $S(W)$ at $W_{0} \in \mc{W}$ is given by
}
S_{0} = dS_{W_{0}} &= \bpm 
A_{11}(W_{0}) & \dotsb & A_{1|I|}(W_{0}) \\
\vdots & \ddots & \vdots \\
A_{|I|1}(W_{0}) & \dotsb & A_{|I||I|}(W_{0})
\epm \in \R^{|I||J| \times |I||J|},
\intertext{
where each $|J| \times |J|$ block matrix has the form
}\label{eq:def-dS-ik}
A_{ik}(W_{0})(V_{k}) 
&= \begin{cases}
w_{ik}\Pi_{s_{0,i}}\Big(\frac{V_{k}}{W_{0k}}\Big),
&\text{if}\; k \in \mc{N}_{i} \\
0, &\text{if}\; k \not\in\mc{N}_{i}
\end{cases},
\qquad
W_{0} \in \mc{W},\quad V_{k} \in T_{0},\qquad i,k \in I
\intertext{
and the non-zero entries if $k \in \mc{N}_{i}$ (using \eqref{eq:def-s0}) 
}\label{eq:dSik-entries}
A_{ik,jl}(W_{0}) 
&= \big(A_{ik}(W_{0})\big)_{jl}
= w_{ik} \begin{cases}
(1 - s_{0i,j})\frac{s_{0i,j}}{W_{0k,j}},
&\text{if}\; j=l
\\
-s_{0i,j}\frac{s_{0i,l}}{W_{0k,l}},
&\text{if}\; j\neq l
\end{cases},\qquad
j,l \in J.
\end{align}
\end{subequations}
\end{proposition}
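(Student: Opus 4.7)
My plan is to reduce the computation to three clean steps: (i) an explicit product-form expression for $S_i(W)$ that immediately exhibits the sparsity pattern of $dS_{W_0}$; (ii) differentiation of this normalized expression by a combined chain-rule and quotient-rule argument; (iii) elimination of the nuisance $\R\eins$ components using $\Pi_{s_{0,i}}\eins = 0$, followed by reading off the matrix entries.

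First, I would apply the second formulation of the geometric mean in \eqref{eq:def-geom-mean} to $L(W)$ and unfold the definition $\exp_p(z) = p\, e^z/\la p, e^z\ra$ to obtain
\begin{equation*}
S_i(W) \;=\; \frac{\prod_{k \in \mc{N}_i} L_k(W_k)^{w_{ik}}}{\big\la \eins,\; \prod_{k \in \mc{N}_i} L_k(W_k)^{w_{ik}}\big\ra}.
\end{equation*}
This form depends only on those $W_k$ with $k \in \mc{N}_i$, which immediately yields $A_{ik}(W_0) = 0$ for $k \notin \mc{N}_i$, i.e.~the second case of \eqref{eq:def-dS-ik}.

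For $k \in \mc{N}_i$, I would set $P_i(W) := \prod_{k \in \mc{N}_i} L_k(W_k)^{w_{ik}}$ and $N_i(W) := \la\eins, P_i(W)\ra$, so that $S_i = P_i/N_i$. Componentwise logarithmic differentiation of $P_i$ combined with the quotient rule for $P_i/N_i$ and the elementary identity $(I - s_{0,i}\eins^\T)\Diag(s_{0,i}) = \Diag(s_{0,i}) - s_{0,i} s_{0,i}^\T = \Pi_{s_{0,i}}$ should produce the intermediate expression
\begin{equation*}
dS_i|_{W_0}(V) \;=\; \Pi_{s_{0,i}}\bigg(\sum_{k \in \mc{N}_i} w_{ik}\, \frac{dL_k|_{W_{0k}}(V_k)}{L_k(W_{0k})}\bigg).
\end{equation*}
For the inner logarithmic derivative, applying the quotient rule to $L_{k,j}(W_k) = W_{k,j} e^{-D_{k,j}/\rho}/\la W_k, e^{-D_k/\rho}\ra$ yields $dL_k|_{W_{0k}}(V_k)/L_k(W_{0k}) = V_k/W_{0k} - c(V_k)\eins$ with a scalar $c(V_k)$ that is constant in $j$. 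Since $\Pi_{s_{0,i}}\eins = 0$ by \eqref{eq:def-Pi-p}, the scalar term drops out and I arrive at $dS_i|_{W_0}(V) = \sum_{k \in \mc{N}_i} w_{ik}\, \Pi_{s_{0,i}}(V_k/W_{0k})$, which is exactly \eqref{eq:def-dS-ik}. The explicit entries \eqref{eq:dSik-entries} then fall out by expanding $\Pi_{s_{0,i}}\Diag(1/W_{0k}) = \Diag(s_{0,i})\Diag(1/W_{0k}) - s_{0,i} s_{0,i}^\T \Diag(1/W_{0k})$ and reading off diagonal and off-diagonal entries.

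The main obstacle I anticipate is the careful bookkeeping of the additive $\R\eins$ contributions that arise in two different places, once from the normalization inside each $L_k$ and once from the normalization inside $S_i$. The whole calculation collapses to a clean formula only because both kinds of contributions lie in the kernel of $\Pi_{s_{0,i}}$; recognizing this early also explains why the explicit entries in \eqref{eq:dSik-entries} depend only on $s_{0,i}$ and on $W_{0k}$ componentwise, with no residual dependence on the distance vectors $D_k$.
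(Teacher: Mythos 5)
Your proposal is correct and follows essentially the same route as the paper: the paper first rewrites $S_{i}(W)=\exp_{\eins_{\mc{S}}}\big(\sum_{k\in\mc{N}_{i}}w_{ik}(\log W_{k}-\tfrac{1}{\rho}D_{k})\big)$ and invokes the lemma $d\exp_{p}(u)(v)=\Pi_{\exp_{p}(u)}(v)$, which is exactly your quotient-rule computation for $P_{i}/N_{i}$ packaged as a statement about the differential of $\exp_{p}$, with the $\R\eins$ contributions annihilated by $\Pi_{s_{0,i}}\eins=0$ in both versions. The only cosmetic difference is that you differentiate the normalized product of likelihood vectors directly, while the paper factors the map as $\exp_{\eins_{\mc{S}}}\circ Z_{i}$ via two preparatory lemmata; the resulting entrywise formulas are obtained identically.
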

The proof follows below after two preparatory Lemmata.
\begin{lemma}\label{lem:dexp-p}
Let $p \in \mc{S}$. Then the differential of $\exp_{p} \colon T_{0} \to \mc{S}$ at $u \in T_{0}$ applied to $v \in T_{0}$ is given by
\begin{equation}\label{eq:dexp-puv}
d\exp_{p}(u)(v) = \Pi_{\exp_{p}(u)}(v),\quad
u,v \in T_{0},\; p \in \mc{S}.
\end{equation}
Moreover, we have
\begin{equation}\label{eq:exp-eins-v}
\exp_{p}(v-\log p) = \exp_{\eins_{\mc{S}}}(v),\quad
v \in T_{0},\; p \in \mc{S}.
\end{equation}
\end{lemma}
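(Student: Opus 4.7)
The plan is to prove both identities by a direct unrolling of the explicit formula $\exp_{p}(z) = \frac{p e^{z}}{\la p, e^{z}\ra}$ from \eqref{eq:def-exp-1}; no geometric machinery beyond that definition and the formula \eqref{eq:def-Pi-p} for $\Pi_{p}$ is needed.

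For \eqref{eq:dexp-puv}, I would compute $d\exp_{p}(u)(v) = \frac{d}{dt}\big|_{t=0}\exp_{p}(u+tv)$ using the quotient rule. Writing the numerator $N(t) = p e^{u+tv}$ (componentwise product) and the denominator $D(t) = \la p, e^{u+tv}\ra$, the derivatives at $t=0$ are $N'(0) = p e^{u} v$ (componentwise) and $D'(0) = \la p e^{u}, v\ra$. Dividing and recognizing $p e^{u}/\la p, e^{u}\ra = \exp_{p}(u)$ in both resulting terms yields
\[
d\exp_{p}(u)(v) = \exp_{p}(u) \, v - \exp_{p}(u) \,\la \exp_{p}(u), v\ra = \bigl(\Diag(\exp_{p}(u)) - \exp_{p}(u)\exp_{p}(u)^{\T}\bigr) v,
\]
which equals $\Pi_{\exp_{p}(u)}(v)$ by \eqref{eq:def-Pi-p}. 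The only thing to keep track of is the distinction between componentwise multiplication by $v$ (coming from $N'(0)$) and the scalar inner product $\la \exp_{p}(u), v\ra$ (coming from $D'(0)$).

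For \eqref{eq:exp-eins-v}, I would substitute $z = v - \log p$ into the definition and use componentwise cancellation: $p \cdot e^{v - \log p} = p \cdot \frac{e^{v}}{p} = e^{v}$, while $\la p, e^{v}/p\ra = \la \eins, e^{v}\ra$. Hence $\exp_{p}(v-\log p) = e^{v}/\la\eins,e^{v}\ra$. Applying the definition to $\exp_{\eins_{\mc{S}}}(v)$ gives $\frac{\eins_{\mc{S}} e^{v}}{\la \eins_{\mc{S}}, e^{v}\ra}$, and the factors $1/|J|$ in numerator and denominator cancel to produce the same expression.

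Neither part presents a genuine obstacle; both are routine verifications from the definition. The only place where one has to be slightly careful is in part one, where one must not confuse the pointwise product structure of $\exp_{p}$ with the inner-product structure of the denominator, so that the rank-one correction $\exp_{p}(u)\exp_{p}(u)^{\T}$ is correctly identified and matched against \eqref{eq:def-Pi-p}.
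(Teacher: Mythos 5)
Your proof is correct. For \eqref{eq:dexp-puv} your quotient-rule computation along the line $u+tv$ is exactly the paper's argument (the paper differentiates along a general curve $\gamma$ with $\dot\gamma(0)=v$, which reduces to the same expression). For \eqref{eq:exp-eins-v} you substitute directly into \eqref{eq:def-exp-1} and cancel $p$ componentwise, whereas the paper instead writes $p=\exp_{\eins_{\mc{S}}}(\Pi_{T_{0}}\log p)$ and invokes the group-action identities \eqref{eq:Lambda-vp-rules}; the two routes are equivalent elementary computations, and yours is, if anything, slightly more self-contained.
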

\begin{proof}
Let $\gamma(t)$ be a smooth curve in $T_{0}$ with $\gamma(0)=u$ and $\dot\gamma(0)=v$. Using \eqref{eq:def-exp-1}, we compute
\begin{equation}
\frac{d}{dt}\frac{p e^{\gamma(t)}}{\la p, e^{\gamma(t)}\ra}\bigg|_{t=0}
= \frac{\la p,e^{u} \ra p v e^{u} - \la p, v e^{u}\ra p e^{u}}{\la p, e^{u}\ra^{2}}
= \frac{p e^{u}}{\la p, e^{u} \ra}\Big(v - \frac{\la p e^{u},v\ra}{\la p, e^{u}\ra}\eins\Big)
= \Pi_{\exp_{p}(u)}(v),
\end{equation}
which is \eqref{eq:dexp-puv}. As for \eqref{eq:exp-eins-v}, using the representation
\begin{equation}\label{eq:p-exp-eins}
p 
\overset{\eqref{eq:def-exp-1}}{=}
\exp_{\eins_{\mc{S}}}(\log p) 
= \exp_{\eins_{\mc{S}}}(\Pi_{T_{0}}\log p),
\end{equation}
where the last equation takes into account Remark \eqref{rem:exp-T0}, we obtain 
\begin{subequations}
\begin{align}
\exp_{p}(v-\log p) 
&= \exp_{p}(v-\Pi_{T_{0}}\log p)
\overset{\eqref{eq:p-exp-eins}}{=}
\exp_{\exp_{\eins_{\mc{S}}}(\Pi_{T_{0}}\log p)}(v-\Pi_{T_{0}}\log p)
\\
&\overset{\eqref{eq:Lambda-simplex}}{=} 
\Lambda\big(v-\Pi_{T_{0}}\log p,\Lambda(\Pi_{T_{0}}\log p,\eins_{\mc{S}})\big)
\overset{\eqref{eq:Lambda-vp-rules}}{=}
\Lambda(v-\Pi_{T_{0}}\log p + \Pi_{T_{0}}\log p,\eins_{\mc{S}})
\\
&= \Lambda(v,\eins_{\mc{S}})
= \exp_{\eins_{\mc{S}}}(v).
\end{align}
\end{subequations}
\end{proof}
We use this Lemma to represent the similarity vectors in a convenient form for subsequently proving Prop.~\ref{prop:dS}.
\begin{lemma}
The similarity vectors \eqref{eq:def-SW} admit the representation
\begin{equation}\label{eq:Si-proof-repr}
S_{i}(W) = \exp_{\eins_{\mc{S}}}\Big(\sum_{k \in \mc{N}_{i}}
w_{ik}\big(\log W_{k}-\frac{1}{\rho}D_{k}\big)\Big),\quad
i \in I.
\end{equation}
\end{lemma}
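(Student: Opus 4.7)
My plan is to unravel the two-level definition of $S_i(W)$ (first the likelihood map $L$, then the geometric mean $\mc{G}^w_i$) and then use Lemma \ref{lem:dexp-p}(2), specifically the identity \eqref{eq:exp-eins-v}, to move the base point from $L_i(W_i)$ to the barycenter $\eins_{\mc{S}}$. The nonobvious part is not really a calculation but rather the observation that $\exp_p$ is insensitive to constant components along $\R\eins$ (Remark \ref{rem:exp-T0}), which is what makes the normalizing scalars from the definition of $L_k$ drop out cleanly.

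Concretely, I would first substitute $L(W)$ into the second form of the geometric mean \eqref{eq:def-geom-mean} to write
\begin{equation*}
S_i(W) = \mc{G}^{w}_{i}\big(L(W)\big)
= \exp_{L_i(W_i)}\!\Big(\log\frac{\prod_{k \in \mc{N}_i} L_k(W_k)^{w_{ik}}}{L_i(W_i)}\Big)
= \exp_{L_i(W_i)}\!\big(v - \log L_i(W_i)\big),
\end{equation*}
where $v := \sum_{k \in \mc{N}_i} w_{ik}\log L_k(W_k)$. Then I invoke the identity \eqref{eq:exp-eins-v} (valid for $v \in \R^{|J|}$ by Remark \ref{rem:exp-T0}) to rebase this as
\begin{equation*}
S_i(W) = \exp_{\eins_{\mc{S}}}\!\Big(\sum_{k \in \mc{N}_i} w_{ik}\log L_k(W_k)\Big).
\end{equation*}

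Next I expand $\log L_k(W_k)$ using \eqref{eq:def-LiWi}, which gives
\begin{equation*}
\log L_k(W_k) = \log W_k - \tfrac{1}{\rho} D_k - \log\!\big\la W_k, e^{-\frac{1}{\rho} D_k}\big\ra \cdot \eins.
\end{equation*}
Summing with weights $w_{ik}$ (which satisfy $\sum_{k \in \mc{N}_i} w_{ik}=1$ by \eqref{eq:def-weights}) produces the expression $\sum_k w_{ik}(\log W_k - \tfrac{1}{\rho}D_k)$ plus a scalar multiple of $\eins$. Since $\exp_{\eins_{\mc{S}}}$ kills constant components (Remark \ref{rem:exp-T0}), that scalar term drops out and I arrive at \eqref{eq:Si-proof-repr}.

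The only subtle step is recognising the pattern $v-\log p$ in the argument of the outer $\exp$ so that \eqref{eq:exp-eins-v} directly applies; beyond that the argument is a routine manipulation of the $\exp/\log$ and inner-product formulas, with the normalization factors absorbed into the $\R\eins$ direction that $\exp_{\eins_{\mc{S}}}$ ignores.
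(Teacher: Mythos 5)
Your proof is correct and follows essentially the same route as the paper's: unravel \eqref{eq:def-SW} and \eqref{eq:def-geom-mean}, expand $\log L_k(W_k)$ via \eqref{eq:def-LiWi}, absorb the normalization scalars into the $\R\eins$ direction that $\exp$ ignores, and rebase to $\eins_{\mc{S}}$ via \eqref{eq:exp-eins-v}. The only (immaterial) difference is ordering — you rebase from $L_i(W_i)$ to the barycenter before expanding the logarithms, while the paper expands first and rebases last — and both base-point choices collapse to the same $\exp_{\eins_{\mc{S}}}$ expression by the very identity \eqref{eq:exp-eins-v} you invoke.
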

\begin{proof}
By \eqref{eq:def-SW} and \eqref{eq:def-geom-mean}, we obtain
\begin{subequations}
\begin{align}
S_{i}(W)
&= \exp_{W_{i}}\Big(\log\frac{\prod_{k \in \mc{N}_{i}} L_{k}(W_{k})^{w_{ik}}}{W_{i}}\Big)
= \exp_{W_{i}}\Big(\sum_{k \in \mc{N}_{i}} w_{ik} \log L_{k}(W_{k}) - \log W_{i}\Big)
\\
&\overset{\eqref{eq:def-LiWi}}{=}
\exp_{W_{i}}\bigg(\sum_{k \in \mc{N}_{i}} w_{ik}\Big(
\log W_{k}-\frac{1}{\rho} D_{k} - \log\big(\la W_{k},e^{-\frac{1}{\rho}D_{k}}\ra\big)\eins\Big) - \log W_{i}\bigg)
\intertext{
using again $\exp_{p}(v+\lambda\eins)=\exp_{p}(v)$ for all $\lambda \in \R,\, v \in T_{0},\, p \in \mc{S}$ (cf.~Remark \ref{rem:exp-T0})
}
&= \exp_{W_{i}}\Big(\sum_{k \in \mc{N}_{i}} w_{ik}\big(
\log W_{k}-\frac{1}{\rho} D_{k}\big) - \log W_{i}\Big)
\overset{\eqref{eq:exp-eins-v}}{=}
\exp_{\eins_{\mc{S}}}\Big(\sum_{k \in \mc{N}_{i}} w_{ik}\big(
\log W_{k}-\frac{1}{\rho} D_{k}\big)\Big).
\end{align}
\end{subequations}
\end{proof}
\begin{proof}[Proof of Prop.~\ref{prop:dS}]
Setting
\begin{equation}\label{eq:Si-exp-Zi}
S_{i}(W)
\overset{\eqref{eq:Si-proof-repr}}{=}
\exp_{\eins_{\mc{S}}} \circ\, Z_{i}(W),\qquad
Z_{i}(W) = \sum_{k \in \mc{N}_{i}}
w_{ik}\big(\log W_{k}-\frac{1}{\rho}D_{k}\big),
\end{equation}
we compute using a smooth curve $\gamma(t)$ in $\mc{W}$ with $\gamma(0)=W$ and $\dot\gamma(0)=V$,
\begin{equation}\label{eq:dZi-WV}
dZ_{i}(W)(V) = \frac{d}{dt} Z_{i}\big(\gamma(t)\big)\big|_{t=0} = \sum_{k \in \mc{N}_{i}} w_{ik} \frac{d}{dt}\log\big(\gamma(t)\big)\big|_{t=0}
= \sum_{k \in \mc{N}_{i}} w_{ik} \frac{V_{k}}{W_{k}}.
\end{equation}
Thus, using \eqref{eq:Si-exp-Zi} and \eqref{eq:dexp-puv} gives
\begin{subequations}
\begin{align}
dS_{i}(W)(V)
&\overset{\eqref{eq:Si-exp-Zi}}{=}
d\exp_{\eins_{\mc{S}}}\big(Z_{i}(W)\big)\big(dZ_{i}(W)(V)\big)
\overset{\eqref{eq:dexp-puv},\eqref{eq:Si-exp-Zi}}{=} 
\Pi_{S_{i}(W)}\big(dZ_{i}(W)(V)\big),
\intertext{
and using the linearity of the map $\Pi_{S_{i}(W)}$ and \eqref{eq:dZi-WV},
}
&=
\sum_{k \in \mc{N}_{i}} w_{ik} \Pi_{S_{i}(W)}\Big(\frac{V_{k}}{W_{k}}\Big),
\end{align}
\end{subequations}
which proves \eqref{eq:def-dS-ik}. 
Inserting $\Pi_{S_{i}(W)}$ due to 
\eqref{eq:def-Pi-p} yields \eqref{eq:dSik-entries}.
\end{proof}
The following section specifies an alternative integration scheme for the linear assignment flow \eqref{eq:ass-flow-linear}. Its approximation properties are numerically examined in Section \ref{sec:Experiments}.

\subsection{Exponential Integrator}
\label{sec:Exponential-Integrator}

We focus on the linear ODE \eqref{eq:dot-V-ass-flow-linear} that together with \eqref{eq:W-ass-flow-linear} determines the linear assignment flow due to \eqref{eq:ass-flow-linear}. The solution to \eqref{eq:dot-V-ass-flow-linear} is given by Duhamel's formula  \cite{Teschl:2012aa},
\begin{equation}\label{eq:V-t-Duhamel}
V(t) = \expm \left(t A\right) V(0) + \int_{0}^{t} \expm\big((t-\tau)A\big) a \dd{\tau}\quad\text{where}\quad
A = \Pi_{W_{0}} S_{0},\quad
a = \Pi_{W_{0}} s_{0},
\end{equation}
which involves the matrix exponential of the matrix $A$ of dimension $|I||J| \times |I||J|$ (square of number of pixels $\times$ number of labels), which can be quite large in image labeling problems ($10^4$--$10^{7}$ variables). Explicitly computing the matrix exponential is neither feasible, because it is dense even if $A$ is sparse, nor required in view of the multiplication with the vector $a$. Rather, taking into account $V(0)=0$ and that uniformly converging series can be integrated term by term, we set $t=T$ large enough and evaluate
\begin{subequations}\label{eq:V-by-phi1}
\begin{align}
V(T) &= \int_{0}^{T} \expm\big((T-\tau)A\big) a \dd{\tau}
= \expm(T A)\int_{0}^{T} \sum_{k=0}^{\infty} \frac{(-\tau A)^{k}}{k!} a \dd{\tau}
\\
&= \expm(T A) \sum_{k=0}^{\infty} \Big[\frac{\tau (-\tau A)^{k}}{(k+1)!}\Big]_{\tau=0}^{T} a
= T \expm(T A) \sum_{k=0}^{\infty} \frac{(-T A)^{k}}{(k+1)!} a
\\ \label{eq:V-by-phi1-c}
&= T\vphi_{1}(T A) a
\end{align}
\end{subequations}
where $\vphi_{1}$ is the entire function
\begin{equation}\label{eq:def-phi-1}
\vphi_{1}(z) = \sum_{k=0}^{\infty} \frac{z^{k}}{(k+1)!} = \frac{e^{z}-1}{z},
\end{equation}
whose series representation yields valid expressions \eqref{eq:V-by-phi1-c} also if the matrix $A$ is singular.

We refer to \cite{Higham:2008aa} for a detailed exposition of matrix functions and to \cite{Moler2003} and \cite[Sections 10 and 13]{Higham:2008aa} for a survey of methods for computing the matrix exponential and the product of matrix functions times a vector. For large problem sizes, the established methods of the two latter references are known to deteriorate, however, and methods based on Krylov subspaces have been developed \cite{Saad:1992aa,Hochbruck:1997aa} and become the method of choice in connection with exponential integrators \cite{Hochbruck:2010aa}.

We confine ourselves with sketching below a state-of-the-art method \cite{Niesen:2012aa} for the approximate numerical evaluation of \eqref{eq:V-by-phi1}. The evaluation of its performance for integrating the linear assignment flow and a comparison to the methods of Section \ref{sec:step-size-linear}, are reported in Section \ref{sec:Experiments}. A more comprehensive evaluation of further recent methods for evaluating \eqref{eq:V-by-phi1} that cope with large problem sizes as well (e.g.~\cite{Al-Mohy2011}), is beyond the scope of this paper.

In order to compute approximately $\vphi_{1}(T A) a$, one considers the Krylov subspace
\begin{equation}
K_{m} = \Span\{a, A a, \dotsc,A^{m-1} a\},
\end{equation}
with orthogonal basis $V_{m} = (v_{1},\dotsc,v_{m})$ arranged as column vectors of an orthogonal matrix $V_{m}$ and computed using the basic Arnoldi iteration \cite{Saad:1992aa}. The action of $A$ is approximated by
\begin{equation}
    H_{m} = V_{m}^{\T} A V_{m}
\end{equation}
which in turn yields the approximation
\begin{equation}\label{eq:vphi-1-approximation}
    \vphi_{1}(A) a
    \approx \vphi_{1}(V_{m} H_{m} V_{m}^{\T}) a = V_{m} \vphi_{1}(H_{m}) V_{m}^{\T} a
    = \|a\| V_{m}\vphi_{1}(H_{m}) e_{1},
\end{equation}
where $e_{1}=(1,0,\dotsc,0)^{\T}$ denotes the first unit vector and the last equality is implied by the Arnoldi iteration producing $V_{m}, H_{m}$, which sets $v_1 = a/\|a\|$. Note that $\vphi_{1}$ merely has to be applied to the much smaller $m \times m$ matrix $H_{m}$, which can be savely and efficiently computed using standard methods \cite{Moler2003,Higham:2008aa}. The vector of $\vphi_{1}(H_{m}) e_{1}$ can recovered \cite[Thm.~1]{Sidje1998} in form of the upper $m$ entries of the last column of $\expm(\widehat{H}_{m,k})$ with the extended matrix
\begin{equation}
\widehat H_{m} = \bpm H_{m} & e_{1} \\ 0 & 0 \epm.
\end{equation}
If the degree of the minimal polynomial of $a$ (i.e.~the the nonzero monic polynomial $p$ of lowest degree such that $p(A) a=0$) is equal to $m$, then the approximation \eqref{eq:vphi-1-approximation} is even exact \cite[Thm.~3.6]{Saad:1992aa}.

\section{Step Sizes, Adaptivity}
\label{sec:Adaptivity}

We specify in this section step size selection for the numerical RKMK schemes of Section \ref{sec:Runge-Kutta}. In addition, for  
the \textit{linear assignment flow} (Section \ref{sec:linearized-flow}), we conduct a local error analysis in Section \ref{sec:step-size-linear} for RK schemes based on the linearity of the tangent space ODE that governs this flow. A corresponding explicit error estimate enables to determine a sequence $(h_{k})_{k \geq 0}$ of step sizes that ensure a prespecified local accuracy at each step $k$.

In order to determine step sizes for the \textit{nonlinear assignment flow} \eqref{eq:assignment-flow}, we proceed differently, because the corresponding vector field depends nonlinearly on the current iterate and estimating local Lipschitz constants will be expensive and less sharp. We therefore adapt in Section \ref{sec:step-size-nonlinear} classical methods for local error estimation and step size selection for nonlinear ODEs based on \textit{embedded} Runge-Kutta methods \cite[Section II.4]{Hairer:2008aa}, to the \textit{geometric} RKMK methods of Section \ref{sec:Runge-Kutta}.
 
The experimental evaluation of both approaches is reported in Section \ref{sec:Experiments}.

\subsection{Linear Assignment Flow}
\label{sec:step-size-linear}

We focus on the linear ODE \eqref{eq:dot-V-ass-flow-linear} that together with \eqref{eq:W-ass-flow-linear} determines the linear assignment flow \eqref{eq:ass-flow-linear}. Due to its approximation property demonstrated in Section \ref{sec:Approximation-Property}, we only consider the linearization point $W_{0}=\eins_{\mc{W}}$. Since the ODE \eqref{eq:ass-flow-linear} evolves on the linear space $\mc{T}_{0}$, we apply the \textit{classical} $s$-stage explicit Runge-Kutta (RK) scheme, rather than the \textit{geometric} $s$-stage RKMK scheme \eqref{eq:RKMK-algorithm}, to obtain
\begin{subequations}\label{eq:RK-Euclidean}
\begin{align}
U^{i} &= \Pi_{W_{0}} s_{0} + \Pi_{W_{0}} S_{0}\Big(
V^{(k)} + h \sum_{j \in [s-1]} a_{i,j} U^{j}\Big),\qquad i \in [s],
\label{eq:RK-Ui} \\ \label{eq:RK-V-update}
V^{(k+1)} &= V^{(k)} + h \sum_{i \in [s]} b_{i} U^{i},\qquad
V^{(0)} = V(0) = 0.
\end{align}
\end{subequations}
Specifically, regarding the \textit{explicit schemes} listed at the end of Section \ref{sec:Lie-Assignment-Flow} in terms of their Butcher tableaus, consecutively inserting \eqref{eq:RK-Ui} into \eqref{eq:RK-V-update} yields with the shorthands $a, A$ defined by \eqref{eq:V-t-Duhamel},
\begin{subequations}\label{eq:RK-Euclidean-q}
\begin{align}
V^{(k+1)} 
&= h a + (I + h A) V^{k},
&&(\textbf{FE})
\\
V^{(k+1)}
&= \Big(h + \frac{h^{2}}{2} A\Big) a + \Big(I + h A + \frac{h^{2}}{2} A^{2}\Big) V^{(k)},
&&(\textbf{H2})
\\
V^{(k+1)}
&= \Big(h+\frac{h^{2}}{2}A+\frac{h^{3}}{6}A^{2}\Big)a
+ \Big(I+h A+\frac{h^{2}}{2}A^{2}+\frac{h^{3}}{6}A^{3}\Big) V^{(k)},
&&(\textbf{H3})
\\
V^{(k+1)}
&= \Big(h+\frac{h^{2}}{2}A+\frac{h^{3}}{6}A^{2}+\frac{h^{4}}{24}A^{3}\Big) a + \Big(I+h A+\frac{h^{2}}{2}A^{2}+\frac{h^{3}}{6}A^{3}+\frac{h^{4}}{24} A^{4}\Big) V^{(k)},
&&(\textbf{RK4})
\intertext{with}
V^{(0)} &= 0.
\end{align}
\end{subequations}
Comparison with \eqref{eq:V-t-Duhamel} shows that due to the linearity of the ODE, each scheme results in a corresponding Taylor series approximation, depending on its order $q$, of the equation
\begin{equation}\label{eq:linear-update-exact}
V(t_{k+1}) = h\vphi_{1}(h A) a + \expm(h A) V(t_{k})
\end{equation}
that is,
\begin{subequations}\label{eq:linear-RK-update}
\begin{align}
V^{(k+1)} &= 
h \Big(\sum_{i=0}^{q-1} \frac{(h A)^{i}}{(i+1)!} \Big) a
+ \Big(\sum_{i=0}^{q} \frac{(h A)^{i}}{i!} \Big) V^{(k)}
\label{eq:Vk+1-series} \\
&= p_{1,q}(h A) a + p_{2,q}(h A) V^{(k)},
\end{align}
\end{subequations}
with matrix-valued polynomials $p_{1,q}, p_{2,q}$. 
Our strategy for choosing the step size $h$ is based 
on the local error estimate specified below as 
Theorem \ref{thm:local-error} and prepared by the 
following Lemma.
\begin{lemma}\label{lem:local-error}
Let $q \in \N$ and $t \in \R$. Then
\begin{equation}\label{lem:taylor-sum}
\sum_{i=0}^{q} \frac{t^{i}}{i!} = e^{t} \frac{\Gamma(1+q,t)}{q!}
\end{equation}
with the incomplete Gamma function
\begin{equation}\label{eq:def-Gamma-incomplete}
\Gamma(1+q,t) = \int_{t}^{\infty} \tau^{q} e^{-\tau}\dd{\tau}.
\end{equation}
\end{lemma}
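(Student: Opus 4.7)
The plan is to prove the identity by establishing a recurrence for the incomplete Gamma function $\Gamma(1+q,t)$ via integration by parts, and then to close the recurrence by induction on $q$.

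First I would apply integration by parts to the integral in \eqref{eq:def-Gamma-incomplete}, taking $u=\tau^{q}$ and $dv = e^{-\tau}\,d\tau$. The boundary term at $\tau = \infty$ vanishes (exponential decay dominates any polynomial), the boundary term at $\tau = t$ contributes $t^{q} e^{-t}$, and the remaining integral is $q\,\Gamma(q,t)$. This yields the recursion
\begin{equation*}
\Gamma(1+q,t) = t^{q} e^{-t} + q\,\Gamma(q,t), \qquad q \geq 1,
\end{equation*}
together with the base case $\Gamma(1,t) = \int_{t}^{\infty} e^{-\tau}\,d\tau = e^{-t}$.

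Next I would prove by induction on $q$ that
\begin{equation*}
\Gamma(1+q,t) = q!\,e^{-t}\sum_{i=0}^{q}\frac{t^{i}}{i!}.
\end{equation*}
The base case $q=0$ is exactly $\Gamma(1,t) = e^{-t}$. For the inductive step, assuming the formula for $q-1$ and inserting it into the recursion gives
\begin{equation*}
\Gamma(1+q,t) = t^{q} e^{-t} + q \cdot (q-1)!\,e^{-t}\sum_{i=0}^{q-1}\frac{t^{i}}{i!} = q!\,e^{-t}\left(\frac{t^{q}}{q!} + \sum_{i=0}^{q-1}\frac{t^{i}}{i!}\right) = q!\,e^{-t}\sum_{i=0}^{q}\frac{t^{i}}{i!},
\end{equation*}
as required. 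Dividing by $q!$ and multiplying by $e^{t}$ then yields \eqref{lem:taylor-sum}.

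There is no real obstacle here: the only point requiring a brief justification is the vanishing of the boundary term at $\infty$ in the integration by parts, which is immediate from $\lim_{\tau\to\infty}\tau^{q}e^{-\tau}=0$. An equivalent route would be to differentiate both sides of \eqref{lem:taylor-sum} with respect to $t$ and verify that the derivatives agree (both equal $\sum_{i=0}^{q-1} t^{i}/i!$, using the differentiation rule $\partial_{t}\Gamma(1+q,t) = -t^{q} e^{-t}$ from Leibniz), together with matching the limit as $t\to\infty$; but the inductive argument above is cleaner and self-contained.
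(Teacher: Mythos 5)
Your proposal is correct and follows essentially the same route as the paper: integration by parts yields the recursion $\Gamma(1+q,t)=t^{q}e^{-t}+q\,\Gamma(q,t)$, which is then resolved down to the base case $\Gamma(1,t)=e^{-t}$. The only cosmetic difference is that you close the recursion by a formal induction, whereas the paper unrolls it directly with an ellipsis.
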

\begin{proof}
Partial integration shows the recursion
\begin{subequations}
\begin{align}
\Gamma(1+q,t) 
&= -\tau^{q}e^{-\tau}\big|_{t}^{\infty}
+ \int_{t}^{\infty} q \tau^{q-1} e^{-\tau}\dd{\tau}
\\
&= t^{q} e^{-t} + q \Gamma(q,t)
\\
&= t^{q} e^{-t} + q\big(t^{q-1} e^{-t} + (q-1)\Gamma(q-1,t)\big)
\\
&= (t^{q}+q t^{q-1}) e^{-t} + q (q-1) \Gamma(q-1,t)
\\
&= (t^{q}+q t^{q-1}) e^{-t} + q (q-1)\big(
t^{q-2} e^{-t} + (q-2)\Gamma(q-2,t)\big)
\\
&= (t^{q}+q t^{q-1}+q (q-1)t^{q-2})e^{-t} + q (q-1)(q-2)\Gamma(q-2,t)
\\
&= \dotsb
\\
&= (t^{q}+q t^{q-1} + \dotsb + q (q-1) \dotsb 2 \cdot t) e^{-t} + q! \,\underbrace{\Gamma(1,t)}_{= e^{-t}}
\\
&= (t^{q}+q t^{q-1} + \dotsb + q (q-1) \dotsb 2 \cdot t + q!) e^{-t}.
\end{align}
\end{subequations}
Subdividing both sides by $q!\, e^{-t}$ yields \eqref{lem:taylor-sum}.
\end{proof}
\begin{theorem}\label{thm:local-error}
Let $V(t)$ solve \eqref{eq:ass-flow-linear} with $W_{0}=\eins_{\mc{W}}$, and let $(V^{(k)})_{k > 0}$ be a sequence generated by a RK scheme \eqref{eq:RK-Euclidean} of order $q$. Set $V(t_{k})=V^{(k)}$ in \eqref{eq:linear-update-exact}. Then $V(t_{k+1})$ in \eqref{eq:linear-update-exact} is the exact value of the linear assignment flow emanating from $V^{(k)}$, and regarding \eqref{eq:linear-RK-update} the local error estimate
\begin{subequations}\label{eq:local-error-bound}
\begin{align}\label{eq:local-error-bound-a}
\|V(t_{k+1})-V^{(k+1)}\| 
&\leq e^{h\|A\|}\Big(1-\frac{\Gamma(1+q,h\|A\|)}{q!}\Big)\Big(\frac{\|a\|}{\|A\|} + \|V^{(k)}\|\Big)
\\ \label{eq:local-error-bound-b}
&< e^{h\|A\|} (1-e^{-h\|A\|})^{(1+q)} 
\Big(\frac{\|a\|}{\|A\|} + \|V^{(k)}\|\Big)
\end{align}
\end{subequations}
holds, where $\Gamma(1+q,h\|A\|)$ is given by \eqref{eq:def-Gamma-incomplete} and $\|A\|$ denotes the spectral norm of the matrix $A = \Pi_{W_{0}}(S_{0})$.
\end{theorem}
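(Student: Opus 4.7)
The plan is to compute the error term directly by subtracting the RK update \eqref{eq:linear-RK-update} from the exact update \eqref{eq:linear-update-exact}. Since $\expm(hA) = \sum_{i=0}^{\infty} (hA)^i/i!$ and $h\vphi_{1}(hA) = \sum_{i=0}^{\infty} h (hA)^{i}/(i+1)!$, the truncations at order $q$ in \eqref{eq:linear-RK-update} mean the remainder is the explicit tail
\[
V(t_{k+1}) - V^{(k+1)} = \sum_{i=q}^{\infty} \frac{h (hA)^{i}}{(i+1)!}\, a + \sum_{i=q+1}^{\infty} \frac{(hA)^{i}}{i!}\, V^{(k)}.
\]
Taking spectral norms, using submultiplicativity, and shifting the summation index in the first sum via $h(h\|A\|)^{i}/(i+1)! = (h\|A\|)^{i+1}/((i+1)!\,\|A\|)$, both tails collapse to the single series $\sum_{j=q+1}^{\infty} (h\|A\|)^{j}/j!$ multiplying $\|a\|/\|A\|$ and $\|V^{(k)}\|$ respectively. (The degenerate case $\|A\|=0$ is trivial and can be treated separately.)

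Next I would rewrite this tail as $e^{h\|A\|} - \sum_{j=0}^{q}(h\|A\|)^{j}/j!$ and apply Lemma~\ref{lem:local-error} with $t = h\|A\|$ to identify the finite partial sum as $e^{h\|A\|}\,\Gamma(1+q,h\|A\|)/q!$. Combining these factors gives the closed-form estimate
\[
\|V(t_{k+1}) - V^{(k+1)}\| \leq e^{h\|A\|}\Big(1 - \tfrac{\Gamma(1+q,h\|A\|)}{q!}\Big)\Big(\tfrac{\|a\|}{\|A\|} + \|V^{(k)}\|\Big),
\]
which is precisely \eqref{eq:local-error-bound-a}.

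To deduce \eqref{eq:local-error-bound-b} it then suffices to establish the elementary inequality
\[
1 - \tfrac{\Gamma(1+q,t)}{q!} \;<\; (1-e^{-t})^{1+q}, \qquad t>0,\; q \in \N.
\]
The cleanest argument is probabilistic: by the well-known Poisson/Gamma identity, the left-hand side equals $P(X \geq q+1)$ for a Poisson$(t)$ variable $X$, which in turn equals $P(Y_{1}+\dotsb+Y_{q+1} \leq t)$ for i.i.d.\ $\mathrm{Exp}(1)$ variables $Y_{i}$; the right-hand side equals $P(Y_{i}\leq t \text{ for all } i)$. The first event is strictly contained in the second for every $t>0$, yielding the strict inequality. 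Alternatively, one can verify it by induction on $q$ after differentiating both sides in $t$ (the base case $q=0$ being equality).

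The only non-bookkeeping step is the scalar inequality in the last paragraph; the rest is Taylor-tail estimation together with a direct application of Lemma~\ref{lem:local-error}. The main obstacle is therefore recognizing the right comparison to $(1-e^{-t})^{1+q}$ and finding a clean justification for it, for which the probabilistic identification of both sides with events for i.i.d.\ exponential random variables gives the most transparent proof.
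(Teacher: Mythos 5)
Your proposal is correct, and for the bulk of the argument it coincides with the paper's proof: both subtract the truncated polynomials $p_{1,q},p_{2,q}$ of \eqref{eq:linear-RK-update} from \eqref{eq:linear-update-exact}, bound the two Taylor tails by $\sum_{j=q+1}^{\infty}(h\|A\|)^{j}/j!$ (with the factor $\|a\|/\|A\|$ coming from the index shift you describe), and invoke Lemma~\ref{lem:local-error} to rewrite the tail as $e^{h\|A\|}\big(1-\Gamma(1+q,h\|A\|)/q!\big)$, which yields \eqref{eq:local-error-bound-a}. Where you genuinely diverge is the passage to \eqref{eq:local-error-bound-b}. The paper deduces the scalar inequality $1-\Gamma(1+q,t)/q! < (1-e^{-t})^{1+q}$ by rewriting $\Gamma(1+q,t)$ as $p\int_{x^{1/p}}^{\infty}e^{-\tau^{p}}\dd{\tau}$ with $p=1/(1+q)$ and citing a lower bound of Alzer for that integral (checking that the admissible constant is $\alpha=1$). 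You instead prove the same inequality from scratch: the left-hand side is $P(X\ge q+1)$ for $X\sim\mathrm{Poisson}(t)$, hence $P(Y_{1}+\dotsb+Y_{q+1}\le t)$ for i.i.d.\ $\mathrm{Exp}(1)$ interarrival times, and this event is strictly contained (for $q\ge 1$, $t>0$) in $\{\max_{i}Y_{i}\le t\}$, whose probability is $(1-e^{-t})^{1+q}$. Your route is self-contained and arguably more transparent, at the cost of not exhibiting the connection to the sharper family of bounds $(1-e^{-\alpha t})^{1+q}$ that Alzer's result provides; the paper's route is shorter on the page but outsources the key inequality to the literature. A minor point in your favor: you explicitly flag the degenerate case $\|A\|=0$ and the equality at $q=0$, both of which the paper passes over silently.
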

\begin{proof}
Using \eqref{eq:linear-update-exact}, \eqref{eq:linear-RK-update} and $V(t_{k})=V^{(k)}$, we bound the local error by
\begin{subequations}\label{eq:proof-bound-series}
\begin{align}
\|V(t_{k+1})-V^{(k+1)}\|
&\leq \|h\vphi_{1}(h A)-p_{1,q}(h A)\| \|a\|
+ \|\expm(h A)-p_{2,q}(h A)\| \|V^{(k)}\|,
\intertext{
and inserting the series \eqref{eq:Vk+1-series} gives
}\label{eq:proof-bound-series-b}
&\leq h \Big(\sum_{i=q}^{\infty} \frac{(h\|A\|)^{i}}{(i+1)!}\Big) \|a\|
+ \Big(\sum_{i=q+1}^{\infty} \frac{(h\|A\|)^{i}}{i!} \Big) \|V^{(k)}\|.
\end{align}
\end{subequations}
Both series absolutely converge for any $h$. 
By Lemma \ref{lem:local-error}, we have
\begin{subequations}
\begin{align}
\sum_{i=q} \frac{t^{i}}{(i+1)!}
&\overset{j=i+1}{=} \sum_{j=q+1}^{\infty} \frac{1}{t} \cdot \frac{t^{j}}{j!}
= \frac{e^{t}}{t}\Big(1-\frac{\Gamma(1+q,t)}{q!}\Big),
\\
\sum_{i=q+1}^{\infty} \frac{t^{i}}{i!}
&= e^{t}\Big(1-\frac{\Gamma(1+q,t)}{q!}\Big).
\end{align}
\end{subequations}
Applying these equations to \eqref{eq:proof-bound-series-b} yields
\begin{subequations}
\begin{align}
h \sum_{i=q}^{\infty} \frac{(h\|A\|)^{i}}{(i+1)!}
= \frac{e^{h\|A\|}}{\|A\|}\Big(1-\frac{\Gamma(1+q,h\|A\|)}{q!}\Big),
\\
\sum_{i=q+1}^{\infty} \frac{(h\|A\|)^{i}}{i!}
= e^{h\|A\|}\Big(1-\frac{\Gamma(1+q,h\|A\|)}{q!}\Big),
\end{align}
\end{subequations}
and substitution into \eqref{eq:proof-bound-series}
\begin{equation}\label{eq:proof-local-bound}
\|V(t_{k+1})-V^{(k+1)}\|
\leq e^{h\|A\|}\Big(1-\frac{\Gamma(1+q,h\|A\|)}{q!}\Big)\Big(\frac{\|a\|}{\|A\|} + \|V^{(k)}\|\Big),
\end{equation}
which is \eqref{eq:local-error-bound-a}. 
To show \eqref{eq:local-error-bound-b}, we use the representation 
\begin{equation}\label{eq:Gamma-inc-repr}
\frac{1}{p}\Gamma\Big(\frac{1}{p},x^{p}\Big)
\overset{\eqref{eq:def-Gamma}}{=} 
\frac{1}{p} \int_{x^{p}}^{\infty} t^{\frac{1}{p}-1} e^{-t} \dd{t}
\overset{t=\tau^{p}}{=} 
\int_{x}^{\infty} e^{-\tau^{p}}\dd{\tau}
\end{equation}
and the lower bound \cite[Corollary of Thm.~1]{Alzer:1997aa}
\begin{equation}\label{eq:Gamma-Alzer-bound}
\frac{1}{\Gamma(1+1/p)} \int_{x}^{\infty} e^{-t^{p}}\dd{t}
> 1 - (1-e^{-\alpha x^{p}})^{1/p},\qquad
\alpha \geq \max\big\{1,\big(\Gamma(1+1/p)\big)^{-p}\big\},
\end{equation}
that holds for all $x > 0$ and $0 < p \neq 1$, with the Gamma function   
\begin{equation}\label{eq:def-Gamma}
\Gamma(q) = \int_{0}^{\infty} \tau^{q-1} e^{-\tau}\dd{\tau},\qquad
\Gamma(n+1) = n! \quad\text{if}\;n \in \N.
\end{equation}
Put 
\begin{equation}\label{eq:proof-Gamma-set-x-p}
x=h\|A\| \qquad\text{and}\qquad
p=\frac{1}{1+q}. 
\end{equation}
Since $q \geq 1$ and $\big(\Gamma(1+1/p)\big)^{-p} = \Gamma(2+q)^{-\frac{1}{1+q}} = \big(\frac{1}{(q+1)!}\big)^{\frac{1}{1+q}} < 1$, we set $\alpha=1$ in view of \eqref{eq:Gamma-Alzer-bound}. Furthermore, we have
\begin{subequations}
\begin{align}
\Gamma(1+q,h\|A\|)
&\overset{\eqref{eq:proof-Gamma-set-x-p}}{=} 
\Gamma\big(1/p,x\big)
= \Gamma\big(1/p,(x^{1/p})^{p}\big)
\overset{\eqref{eq:Gamma-inc-repr}}{=} 
p\int_{x^{1/p}}^{\infty} e^{-t^{p}}\dd{t}
\\
&\overset{\eqref{eq:Gamma-Alzer-bound}, \alpha=1}{>} p\Gamma\big(1+1/p\big)\big(1-\big(1-e^{-x}\big)^{1/p}\big),
\intertext{and using
$p\,\Gamma\big(1+1/p\big)
\overset{\eqref{eq:proof-Gamma-set-x-p}}{=}
\frac{\Gamma(2+q)}{1+q}
= \frac{(1+q)!}{1+q}=q!$, since $q$ is integer,
}
&= q!\big(1-(1-e^{-x})^{(1+q)}\big).
\end{align}
\end{subequations}
Thus,
\begin{equation}
e^{x}\Big(1-\frac{\Gamma\big(1/p,x\big)}{q!}\Big)
< e^{x}\big(1-e^{-x}\big)^{(1+q)}
\end{equation}
which after substituting \eqref{eq:proof-Gamma-set-x-p} and in turn into \eqref{eq:proof-local-bound}, proves \eqref{eq:local-error-bound-b}.
\end{proof}
Theorem \ref{thm:local-error} enables to control the local integration error by choosing the step size $h$, using the simple form of the bound \eqref{eq:local-error-bound}, depending on the constants $\|a\|, \|A\|$ and the norm $\|V^{(k)}\|$ of the current iterate. Specifically, we choose
\begin{equation}\label{eq:hk-RK-linear}
h = h_{k} \qquad\text{such that}\qquad
\|V(t_{k+1})-V^{(k+1)}\| \leq \tau
\end{equation}
by \eqref{eq:local-error-bound}, for some prespecified value $\tau$. 
Inspecting the parametrization \eqref{eq:linear-ass-parametrization} shows that $\|V(t)\|$ grows -- and hence the step sizes \eqref{eq:hk-RK-linear} \textit{decrease} -- until $W(t)$ is close enough to a vertex of $\mc{W}$ (which represents a labeling) and satisfies a termination criterion that stops the chosen iterative RK scheme \eqref{eq:RK-Euclidean}. 

In order to check how large $\|V(t)\|$ then will be, assume
\begin{equation}
W_{i}=(\veps,\dotsc,\veps,1-(|J|-1)\veps,\veps,\dotsc,\veps) \in \R^{|J|}\qquad\text{and}\qquad \veps \ll \frac{1}{|J|-1} \leq 1,
\end{equation}
that is $W_{ij} \approx 1$ and $W_{il} \approx 0$ if $l \neq j$. Then with $W_{0i}=\eins_{\mc{S}}$ and by \eqref{eq:linear-ass-parametrization}, \eqref{eq:def-exp-1}
\begin{subequations}
\begin{align}
V_{i} &= \Exp_{\eins_{\mc{S}}}^{-1}(W_{i})
= \Pi_{\eins_{\mc{S}}}\big(\log W_{i}-\log\eins_{\mc{S}}\big)
= \frac{1}{|J|} \big(\log W_{i} - \frac{1}{|J|}\la\eins,\log W_{i}\ra\eins\big)
\\
\log W_{i} &\approx (\log\veps,\dotsc,\log\veps,0,\log\veps,\dotsc,\log\veps),\qquad
\frac{1}{|J|}\la\eins,\log W_{i}\ra
\approx \frac{|J|-1}{|J|}\log\veps
\approx \log\veps
\intertext{
and hence
}\label{eq:termination-rough}
\|V_{i}\|
&\approx \frac{1}{|J|}\log\frac{1}{\veps},
\qquad\qquad
\|V\| \approx \frac{|I|}{|J|}\log\frac{1}{\veps}.
\end{align}
\end{subequations}
Thus, as soon as the norm $\|V(t)\|$ has grown to the order $\log\frac{1}{\veps}$, a termination criterion that checks if $W(t)$ is $\veps$-close to some vertex of $\mc{W}$, will be satisfied.

Figure \ref{fig:RK-linear-boundFactor} quantitatively illustrates how much the factor $e^{h\|A\|}(1-e^{h\|A\|})^{(1+q)}$ of the upper bound \eqref{eq:local-error-bound} overestimates the exact factor \eqref{eq:proof-local-bound} computed in the proof of Thm.~\ref{thm:local-error}, and hence how conservative (i.e.~too small) the step size $h_{k}$ will be chosen to achieve \eqref{eq:hk-RK-linear}. The curves of Figure \ref{fig:RK-linear-boundFactor} show that the estimate \eqref{eq:local-error-bound} is fairly tight and suited to adapt the step size. Furthermore, comparing the ordinate values of both panels for $q=1$ and $q=4$ shows that, in order to achieve a fixed accuracy $\tau$ in \eqref{eq:hk-RK-linear}, using a \textit{higher}-order RK scheme \eqref{eq:RK-Euclidean} enables to choose a \textit{larger} step size.

\begin{figure}
\centerline{
\includegraphics[width=0.4\textwidth]{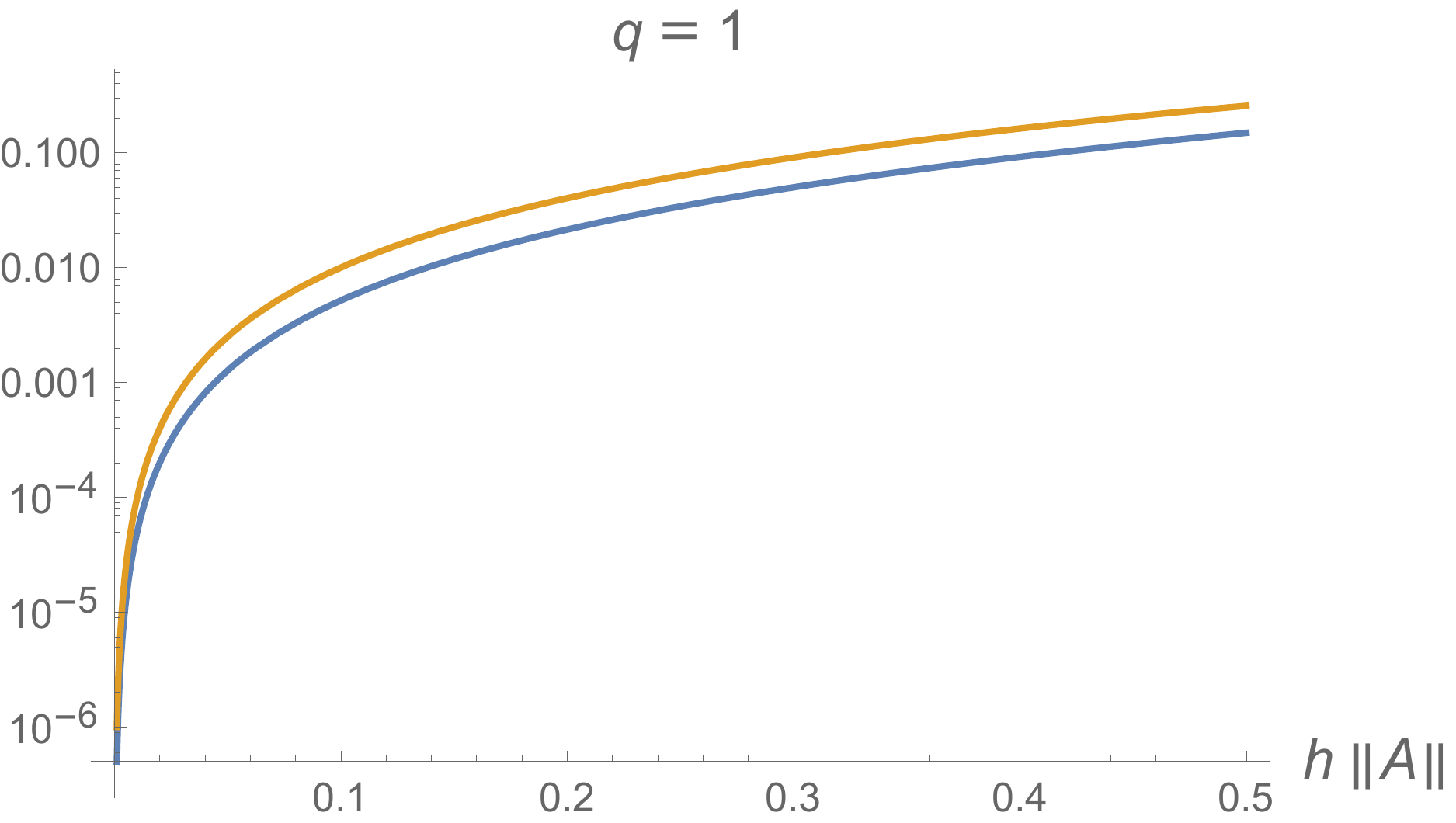}\hspace{0.05\textwidth}
\includegraphics[width=0.4\textwidth]{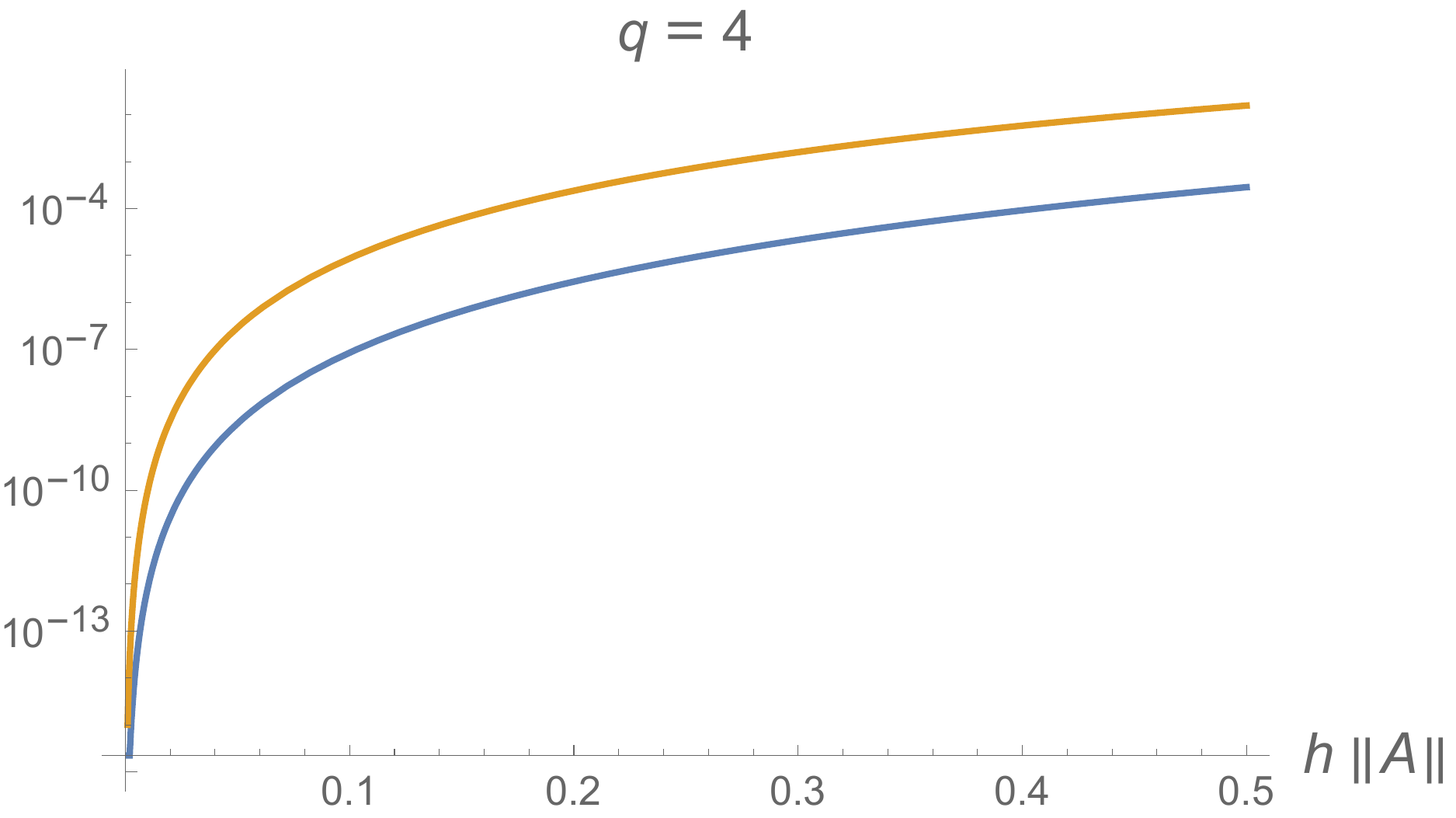}
}
\caption{
The factor $e^{h\|A\|}(1-e^{-h\|A\|})^{(1+q)}$ of the upper bound \eqref{eq:local-error-bound} (beige curve) and the numerically computed exact factor due to \eqref{eq:proof-local-bound} (blue line), as a function of $h\|A\|$, for $q=1$ (left panel) and $q=4$ (right panel). The \textit{relative} overestimation factor increases with the order $q$ and leads to more or less conservative step size choices \eqref{eq:hk-RK-linear}. Comparing the \textit{absolute} ordinate values of both panels shows that in order to achieve \eqref{eq:hk-RK-linear}, using a higher-order RK-scheme \eqref{eq:RK-Euclidean} enables to choose a larger step size.
}
\label{fig:RK-linear-boundFactor}
\end{figure}
\subsection{Nonlinear Assignment Flow}
\label{sec:step-size-nonlinear}

Similar to the preceding section, we wish to select step sizes $(h_{k})_{k \geq 0}$ in order to control the local error on the left-hand side of \eqref{eq:local-error-bound}. Because an estimate like the right-hand side of \eqref{eq:local-error-bound}  that is valid at each step $k$, is not available for the \textit{nonlinear} assignment flow, we adapt established \textit{embedded RK methods} \cite[Section II.V]{Hairer:2008aa} to the geometric RKMK schemes \eqref{eq:RKMK-algorithm}.

The basic strategy is to evaluate \textit{twice} step \eqref{eq:RKMK-V-update} 
\begin{equation}\label{eq:RK-embedded-V-hat-V}
V = h \sum_{j \in [s]} b_{j} \widetilde U^{j},\qquad\qquad
\widehat V = h \sum_{j \in [s]} \widehat b_{j} \widetilde U^{j},
\end{equation}
using a second collection of coefficients $\widehat b_{j},\, j \in s$, but with the \textit{same} vector fields $U^{i},\widetilde U^{i},\, i \in [s]$. Thus each embedded method can be specified by \textit{augmenting} the corresponding Butcher tableau accordingly,
\begin{center}
\begin{tabular}{c|ccccc}
$0$ & \\
$c_{2}$ & $a_{21}$ \\
$c_{3}$ & $a_{31}$ & $a_{32}$ \\
\vdots & \vdots & \vdots & $\ddots$ \\
$c_{s}$ & $a_{s1}$ & $a_{s2}$ & $\dotsb$ & $a_{s(s-1)}$ \\ \hline
& $b_{1}$ & $b_{2}$ & $\dotsb$ & $b_{s-1}$ & $b_{s}$ \\
& $\widehat b_{1}$ & $\widehat b_{2}$ & $\dotsb$ & $\widehat b_{s-1}$ & $\widehat b_{s}$
\end{tabular}
\end{center}
Proper embedded methods combine a pair of RK schemes of \textit{different} order $q, \widehat q$ so that $\|V-\widehat V\|$ indicates if the step size $h$ is small enough, at each step $k$ of the overall iteration. Since the vectors $U^{i},\widetilde U^{i},\, i \in [s]$ are used twice, this comes at little additional costs. We also point out that unlike the linear case, the magnitude $\|V\|$ of tangent vectors has much less influence, because the scheme \eqref{eq:RKMK-algorithm} that is consecutively applied at each step $k$, is based on \eqref{eq:ass-flow-dot-V} with the initial condition $V(0)=0$. As a consequence, the magnitude $\|V\|$ of the update \eqref{eq:RKMK-V-update} will be relatively small at each step $k$.

We list the tableaus of two embedded methods that we evaluate in Section \ref{sec:Experiments}.
\begin{center}
\parbox{0.175\textwidth}{\centering
\begin{tabular}{c|cc}
$0$ & \\ 
$1$ & $1$ \\ \hline
& $1$ & $0$ \\ \hline
& $1/2$ & $1/2$
\end{tabular}
} \hspace{0.1\textwidth}
\parbox{0.175\textwidth}{\centering
\begin{tabular}{c|ccc}
$0$ & \\
$1/3$ & $1/3$ \\
$2/3$ & $0$ & $2/3$ \\ \hline
& $1/4$ & $0$ & $3/4$ \\ \hline
& $1/3$ & $2/3$ & $0$
\end{tabular}
} \\
\parbox{0.175\textwidth}{\centering (\textbf{RK-1/2})}
\hspace{0.1\textwidth}
\parbox{0.175\textwidth}{\centering (\textbf{RK-3/2})}
\end{center}
The first method combines the forward Euler scheme and Heun's method of order $2$. The second method complements Heun's method of order $3$ as specified on \cite[p.~166]{Hairer:2008aa}. We call \textbf{RKMK-1/2} and \textbf{RKMK-3/2} the geometric versions of these schemes when they are applied in connection with \eqref{eq:RKMK-algorithm}.

We conclude this section by specifying the extension of \eqref{eq:RKMK-algorithm} in order to include adaptive step size control. Fix parameters $0<\tau \ll 1$, $n_{\tau} \in \N$ and set an initial sufficiently small step size $h=h_{0}$. At each step $k$, using the distance $d_{I}$ defined by \eqref{eq:def-DI}:
\begin{equation}\label{eq:RK-embedded}
\begin{minipage}{0.8\textwidth}
\begin{enumerate}[1.]
\item
Compute $U^{i},\widetilde U^{i},\, i \in [s]$.
\item 
Compute $V, \widehat V$ by \eqref{eq:RK-embedded-V-hat-V}.
\item
If $d_{I}(V,\widehat V) < \frac{\tau}{n_{\tau}}$, then increase the step size: $h \leftarrow 1.25 h$, compute the update $W(h)$ by \eqref{eq:RKMK-W-update}, set $W_{0} \leftarrow W(h)$ and proceed with the next iteration $k+1$ and step 1.
Otherwise continue with step 4.
\item
If $d_{I}(V,\widehat V) < \tau$, then keep the step size $h$, compute the update $W(h)$ by \eqref{eq:RKMK-W-update}, set $W_{0} \leftarrow W(h)$ and proceed with the next iteration $k+1$ and step 1. \\
Otherwise continue with step 5.
\item
Decrease the step size $h \leftarrow \frac{h}{2}$ and repeat iteration $k$, i.e.~continue with step 1.
\end{enumerate}
\end{minipage}
\end{equation}
Typical parameter values are $\tau=0.01, n_{\tau}=20$. Starting with an small initial step size $h_{0}$, the algorithm adaptively generates a sequence $(h_{k})$ whose values increase whenever the local error estimate is much smaller (by a factor $n_{\tau}$) than the prescribed tolerance $\tau$.

\section{Experiments and Discussion}
\label{sec:Experiments}

This section is organized as follows (see also Figure \ref{fig:paperStructure}).
We specify details of our implementation in Section \ref{sec:Implementation-Details}.
Section \ref{sec:exp-nonlinear} reports the evaluation of the geometric RKMK schemes \eqref{eq:RKMK-algorithm} with embedded step size control \eqref{eq:RK-embedded}, for integrating the \textit{nonlinear} assignment flow.
Section \ref{sec:exp-linear} is devoted to the \textit{linear} assignment flow: Assessment of how closely it approximates the nonlinear assignment flow, evaluation of the
RK schemes \eqref{eq:RK-Euclidean} with adaptive step size selection \eqref{eq:hk-RK-linear}, and evaluation of the exponential integrator introduced in Section \ref{sec:Exponential-Integrator}.

\subsection{Implementation Details}
\label{sec:Implementation-Details}

All algorithms were implemented and evaluated using Mathematica. In particular, we did \textit{not} apply any assignment normalization as suggested by \cite[Section 3.3.1]{Astrom:2017ac}, since Mathematica can work with arbitrary numerical precision. As a consequence, our results refute the claim of the authors of \cite{Bergmann:2017aa} that properties of the assignment flow are largely caused by this normalization step and a particular numerical scheme. Rather, our experiments illustrate \textit{intrinsic} properties of the assignment flow as well as the reliability and efficiency of a \textit{variety} of algorithms for integrating this flow numerically, conforming to the underlying geometry, as derived in the present paper.

Throughout the experiments, we used \textit{uniform} weights in \eqref{eq:def-SW} and \eqref{eq:def-geom-mean}, respectively, since how to choose these `control variables' in a proper way depending on the application at hand, is subject of our current work  and will be reported elsewhere. Yet, we point out that the algorithms of the present paper \textit{do} cover such more general scenarios. For example, the simplest geometric RKMK scheme \eqref{eq:RKMK-algorithm} was recently used for integrating the assignment flow in \textit{unsupervised} scenarios \cite{Zern:2018ac}, where labels evolve and hence distance vectors \eqref{eq:def-Di} no longer are static but vary with time $D = D(t)$, too.

\subsubsection{Ground Truth Flows} In order to obtain a baseline for assessing the performance of linearizations, of approximate numerical integration by various schemes or both, we always solved the assignment flow (nonlinear or linear) with high numerical accuracy using the geometric implicit Euler scheme (nonlinear flow) or the euclidean implicit Euler scheme (linear flow), with a sufficiently  small step size $h$. This requires to solve a fixed point equation as part of every iteration $k$, involving the nonlinear mapping on the right-hand side of \eqref{eq:ass-flow-dot-V} in case of the nonlinear assignment flow, or the linear mapping on the right-hand side of \eqref{eq:dot-V-ass-flow-linear} in case of the linear assignment flow. These fixed point equations were iteratively solved as well, and the corresponding iterations terminated when subsequent elements of the corresponding subsequences $(V^{k_{i}})_{i \geq 0}$ that measure the residual of the fixed point equation, satisfied
\begin{equation}\label{eq:def-DI}
d_{I}(V^{k_{i+1}},V^{k_{i}}) =
\frac{1}{|J|} \max_{i \in I} \|V_{i}^{k_{i+1}}-V_{i}^{k_{i}}\| \leq 10^{-8}.
\end{equation}
Starting these inner iterative loops with $V^{k_{0}}=V^{(k)}$ and terminating with $V^{k_{i,\mrm{end}}}$, we set $V^{(k+1)}=V^{k_{i,\mrm{end}}}$ and continued with the outer iteration $k+1$.

\subsubsection{Termination criterion}
As suggested by \cite{Astrom:2017ac},
all iterative numerical schemes generating sequences $(W^{(k)})$ were terminated when the average entropy of the assignment vectors dropped below the threshold
\begin{equation}\label{eq:termination-criterion}
-\frac{1}{|I||J|}\sum_{i \in I}\sum_{j \in J} W_{ij}^{(k)} \log W_{ij}^{(k)} < 10^{-3}.
\end{equation}
If this happens, then -- possibly up to a tiny subset of pixels $i \in I$ -- all assignment vectors $W_{i}$ are very close to a unit vector and hence almost uniquely indicate a labeling.

\begin{figure}
\centerline{
\includegraphics[width=0.2\textwidth]{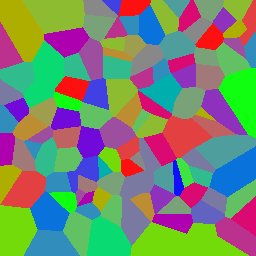}
\includegraphics[width=0.2\textwidth]{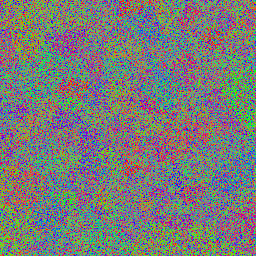}
\hspace{0.05\textwidth}
\includegraphics[width=0.2\textwidth]{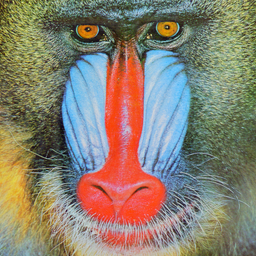}
\includegraphics[width=0.2\textwidth]{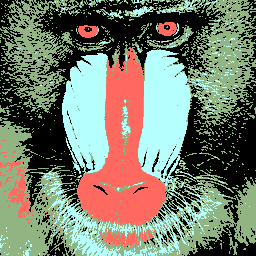}
}
\centerline{
\parbox{0.4\textwidth}{\centering\small (a)}
\hspace{0.05\textwidth}
\parbox{0.4\textwidth}{\centering\small (b)}
}
\caption{
Labeling scenarios for evaluating numerical schemes for integrating the assignment flow. (a) Computer-generated data with $31$ labels (left) and a noisy version used as input data (right). (b) A color image used as input data (left) using 4 color values as labels. These labels are illustrated on the right where each pixel has been replaced by the closest label.
}
\label{fig:labeling-scenarios}
\end{figure}
\subsubsection{Data}\label{sec:data}
Besides using the one-dimensional signal shown by Figure \ref{fig:signal-1D} that enables to visualize the entire evolution of the flow as plot in a single simplex (cf.~Figure \ref{fig:simplex-plots}), we used two further labeling scenarios for evaluating the numerical integration schemes.

Figure \ref{fig:labeling-scenarios}(a) shows a scenario adopted from \cite[Fig.~6]{Astrom:2017ac} using $\rho=0.1$ and $|\mc{N}_{i}|=7 \times 7$. The input data (right panel) comprise 31 labels encoded as vertices (unit vectors) of a corresponding simplex. This results in \textit{uniform} distances \eqref{eq:def-Di} and enables to assess in an unbiased way the effect of regularization by geometric diffusion in terms of the similarity map \eqref{eq:def-SW}.

Figure \ref{fig:labeling-scenarios}(b) shows a color image together with 4 color vectors used as labels, as illustrated by the panel on the right. In contrast to the data of Figure \ref{fig:labeling-scenarios}(a) with a high level of noise and a uniform data term (as motivated and explained above), the input data shown on the left of Figure \ref{fig:labeling-scenarios}(b) are not noisy but comprise spatial structures at quite different scales (fine texture, large homogeneous regions), causing a nonuniform data term and a more complex assignment flow.

Both scenarios together provide a testbed in order to  check and compare schemes for numerically integrating the assignment flow.

\begin{figure}
\centerline{
\includegraphics[width=0.2\textwidth]{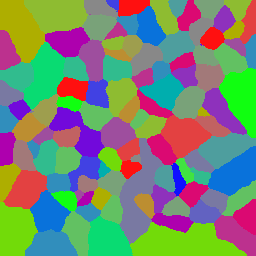}\hfill
\includegraphics[width=0.35\textwidth]{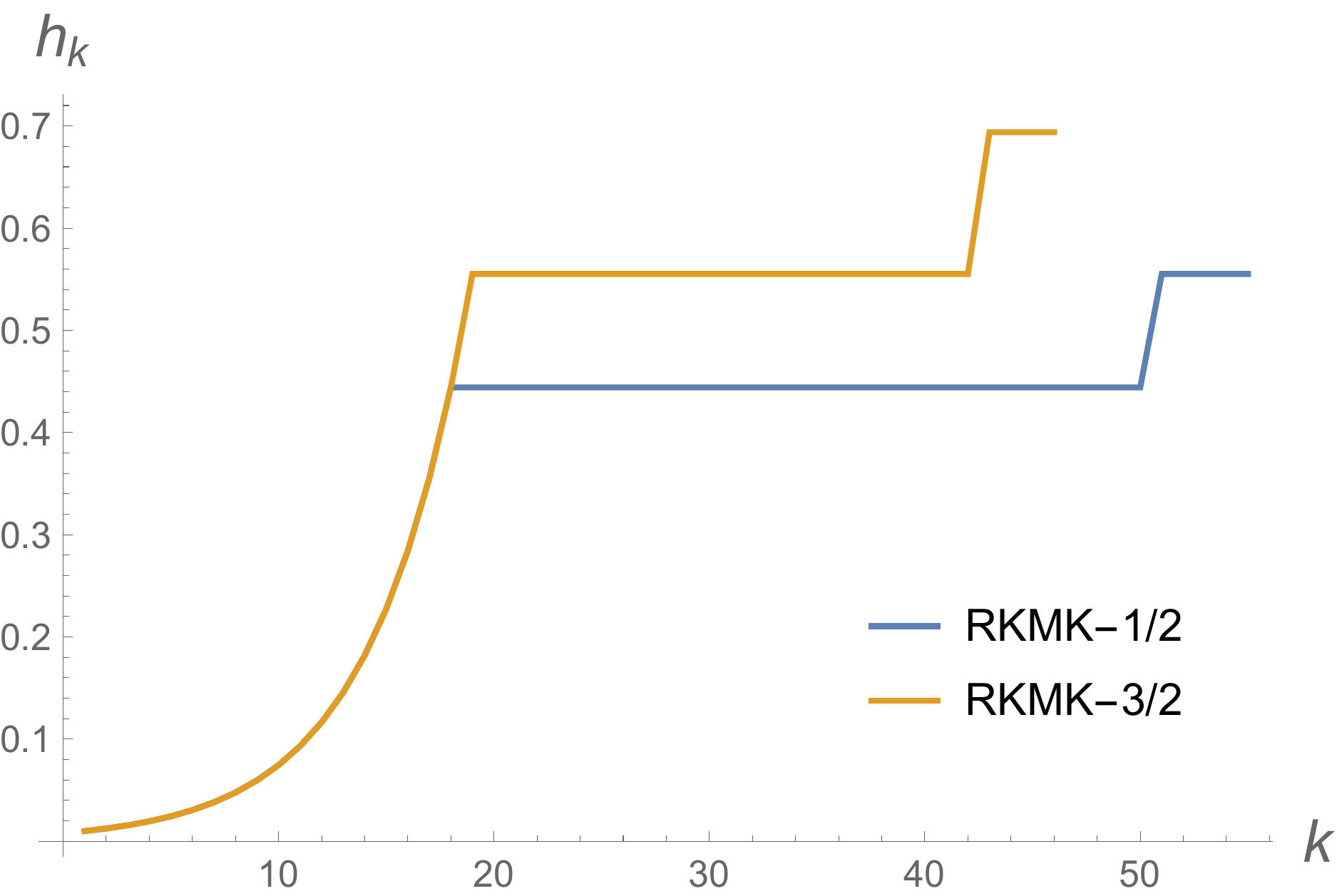}\hfill
\includegraphics[width=0.2\textwidth]{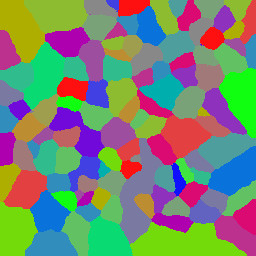}\hfill
\includegraphics[width=0.2\textwidth]{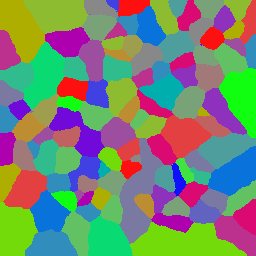}
}
\centerline{
\parbox{0.2\textwidth}{\centering\small (a)} \hfill
\parbox{0.35\textwidth}{\centering\small (b)} \hfill
\parbox{0.2\textwidth}{\centering\small (c)} \hfill
\parbox{0.2\textwidth}{\centering\small (d)}
}
\caption{
\textbf{Nonlinear assignment flow, embedded RKMK schemes.}
Results of processing the data shown by Fig.~\ref{fig:labeling-scenarios}(a), right panel (parameter: $\rho=0.1, |\mc{N}_{i}|=7 \times 7$).
(a) Ground truth labeling resulting from integrating the (full nonlinear) assignment flow using the \textit{implicit} geometric Euler scheme (step size $h=0.5$). (b) Sequences $(h_{k})$ of adaptive step sizes generated by the embedded geometric schemes RKMK-1/2 and RKMK-3/2 of Section \ref{sec:step-size-nonlinear}. The corresponding labeling results of these \textit{explicit} schemes are shown as panels (c) and (d). Because there is almost no difference to the ground truth result, both explicit schemes integrate the assignment flow sufficiently accurate.
}
\label{fig:embedded-RKMK-a}
\end{figure}
\begin{figure}
\begin{minipage}{0.64\textwidth}
\centerline{
\includegraphics[width=0.32\textwidth]{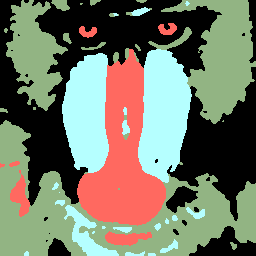}\hfill
\includegraphics[width=0.32\textwidth]{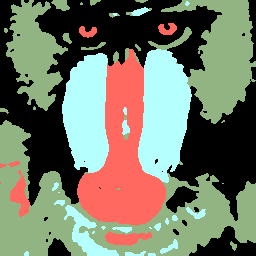}\hfill
\includegraphics[width=0.32\textwidth]{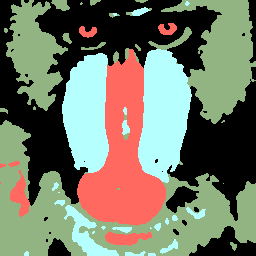}
}
\vspace{0.01\textwidth}
\centerline{
\includegraphics[width=0.32\textwidth]{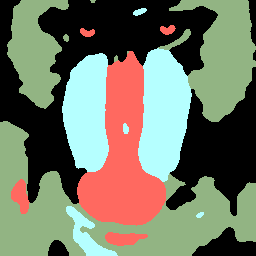}\hfill
\includegraphics[width=0.32\textwidth]{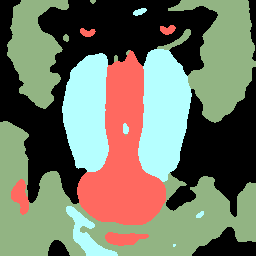}\hfill
\includegraphics[width=0.32\textwidth]{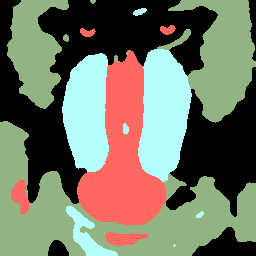}
}
\centerline{
\parbox{0.32\textwidth}{\centering\small ground truth}
\hfill
\parbox{0.32\textwidth}{\centering\small RKMK-1/2}
\hfill
\parbox{0.32\textwidth}{\centering\small RKMK-3/2}
}
\end{minipage}
\begin{minipage}{0.35\textwidth}
\includegraphics[width=\textwidth]{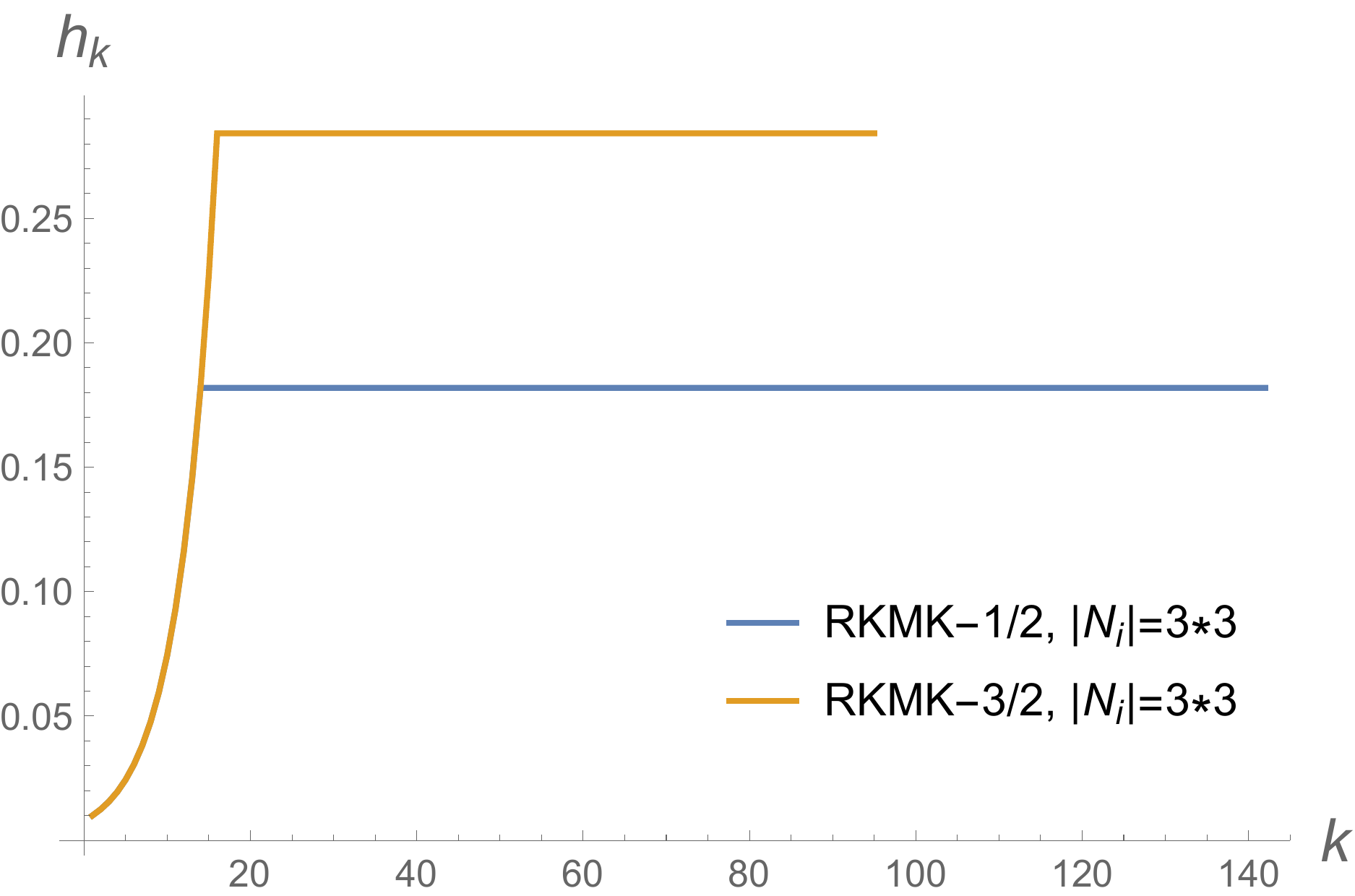}
\includegraphics[width=\textwidth]{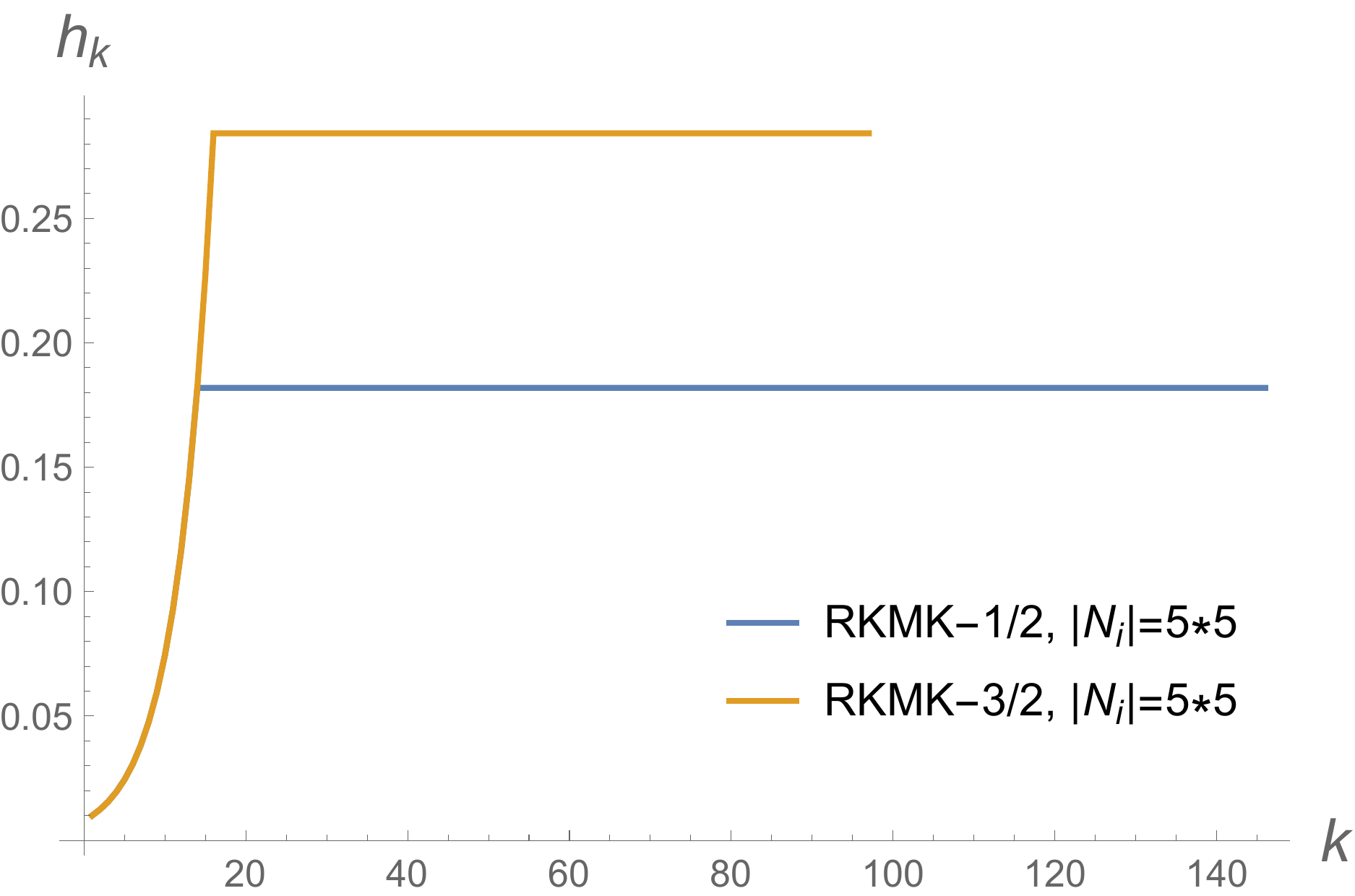}
\end{minipage}
\caption{
\textbf{Nonlinear assignment flow, embedded RKMK schemes.}
Results of processing the data shown by Fig.~\ref{fig:labeling-scenarios}(b), right panel (parameters: $\rho=0.5$, $|\mc{N}_{i}|=3 \times 3$ (less regularization; top row) and $|\mc{N}_{i}|=5 \times 5$ (more regularization; bottom row). The ground truth labeling was computed by integrating the (full nonlinear) assignment flow using the \textit{implicit} geometric Euler scheme (step size $h=0.5$). The embedded RKMK schemes (Section \ref{sec:step-size-nonlinear}) generated the adaptive step sizes shown in the panels on the right and almost identical labeling results.
}
\label{fig:embedded-RKMK-b}
\end{figure}
\subsection{Nonlinear Flow: Embedded RKMK-Schemes}
\label{sec:exp-nonlinear}

Figures \ref{fig:embedded-RKMK-a} and \ref{fig:embedded-RKMK-b} show the results of the two embedded RKMK schemes of Section \ref{sec:step-size-nonlinear} used to integrate the full nonlinear assignment flow \eqref{eq:assignment-flow}, for the data shown by right panels of Figure \ref{fig:labeling-scenarios} (a) and (b).

The two embedded RKMK schemes combine RKMK schemes of different approximation order $q/q'$, 1/2 and 3/2, respectively, which reuse vector field evaluations \eqref{eq:RKMK-algorithm} in order to produce sequences of tangent vectors $(V^{(k)}), (\hat V^{(k)})$ that enable to estimate the local approximation error by monitoring the distances $d_{I}(V^{(k)},\hat V^{(k)})$. As specified by \eqref{eq:RK-embedded}, step sizes $h_{k}$ adaptively increase provided a prescribed error tolerance is not violated.

The parameter values $\tau=0.01$ (tolerance) and $n_{\tau}=20$ (tolerance factor), used to produce the results shown by Figures \ref{fig:embedded-RKMK-a} and \ref{fig:embedded-RKMK-b}, suffice to integrate accurately the full nonlinear assignment flow by the respective \textit{explicit} schemes, as the comparison with the ground truth labeling generated by the implicit geometric Euler scheme shows. Since RKMK-3/2 has higher order $q$ than RKMK-1/2, larger step sizes can be tolerated (panel (b)). On the other hand, each iteration of RKMK-3/2 is about twice expensive as RKMK-1/2.

Both plots of the step size sequences $(h_{k})$ reveal that the initial step size $h_{0}=0.01$ was much too small (conservative), and that a fixed value of $h_{k}$ is adequate for most of the iterations. This value was larger for the experiment corresponding to Figure \ref{fig:embedded-RKMK-a} due to the uniform data term (by construction, as explained in Section \ref{sec:data}) and the more uniform scale of spatial structures. By contrast, the presence of spatial structures at quite different scales in the data corresponding to Figure \ref{fig:embedded-RKMK-b} causes a more involved assignment flow to be integrated, and hence to a smaller step size after adaption. Comparing the two rightmost panels of Figure \ref{fig:embedded-RKMK-b} shows that the strength of regularization (neighborhood size $|\mc{N}_{i}|$) had only little influence on the sequences of step sizes.

We never observed decreasing step sizes in these \textit{supervised} scenarios, that is step 5.~of \eqref{eq:RK-embedded} never was active. This may change in more involved scenarios, however (cf.~Remark \ref{rem:non-basic-cases}).

Overall, a few dozens of explicit iterations suffice for accurate geometric numerical integration of the assignment flow. Each iteration may be implemented in a fine-grained parallel way and has computational costs roughly equivalent to a convolution, besides mapping to the tangent space $\mc{T}_{0}$ and back to the assignment manifold $\mc{W}$, at each iteration.

\subsection{Linear Assignment Flow}
\label{sec:exp-linear}

The approach of Section \ref{sec:ExponentialIntegrators} involves two different approximations:
\begin{enumerate}[(i)]
\item
the \textit{linear} assignment flow \eqref{eq:ass-flow-linear} approximating the \textit{full} assignment flow \eqref{eq:assignment-flow}, and
\item
the numerical integration of the linear assignment flow using two alternative numerical schemes:
\begin{enumerate}[(a)]
\item
adaptive RK schemes (Section \ref{sec:step-size-linear}) based on the parametrization of Prop.~\ref{prop:linear-ass-parametrization} and
\item
the exponential integrator (Section \ref{sec:Exponential-Integrator}).
\end{enumerate}
\end{enumerate}
Due to the remarkable approximation properties of the linear assignment flow when a \textit{single} linearization at the barycenter is only used (Section \ref{sec:Approximation-Property}), we entirely focused on this flow when evaluating the numerical schemes (a) and (b) in Sections \ref{sec:exp-RK-adaptive} and \ref{sec:exp-exponential-integrator}.

\begin{figure}
\centerline{
\includegraphics[width=0.325\textwidth]{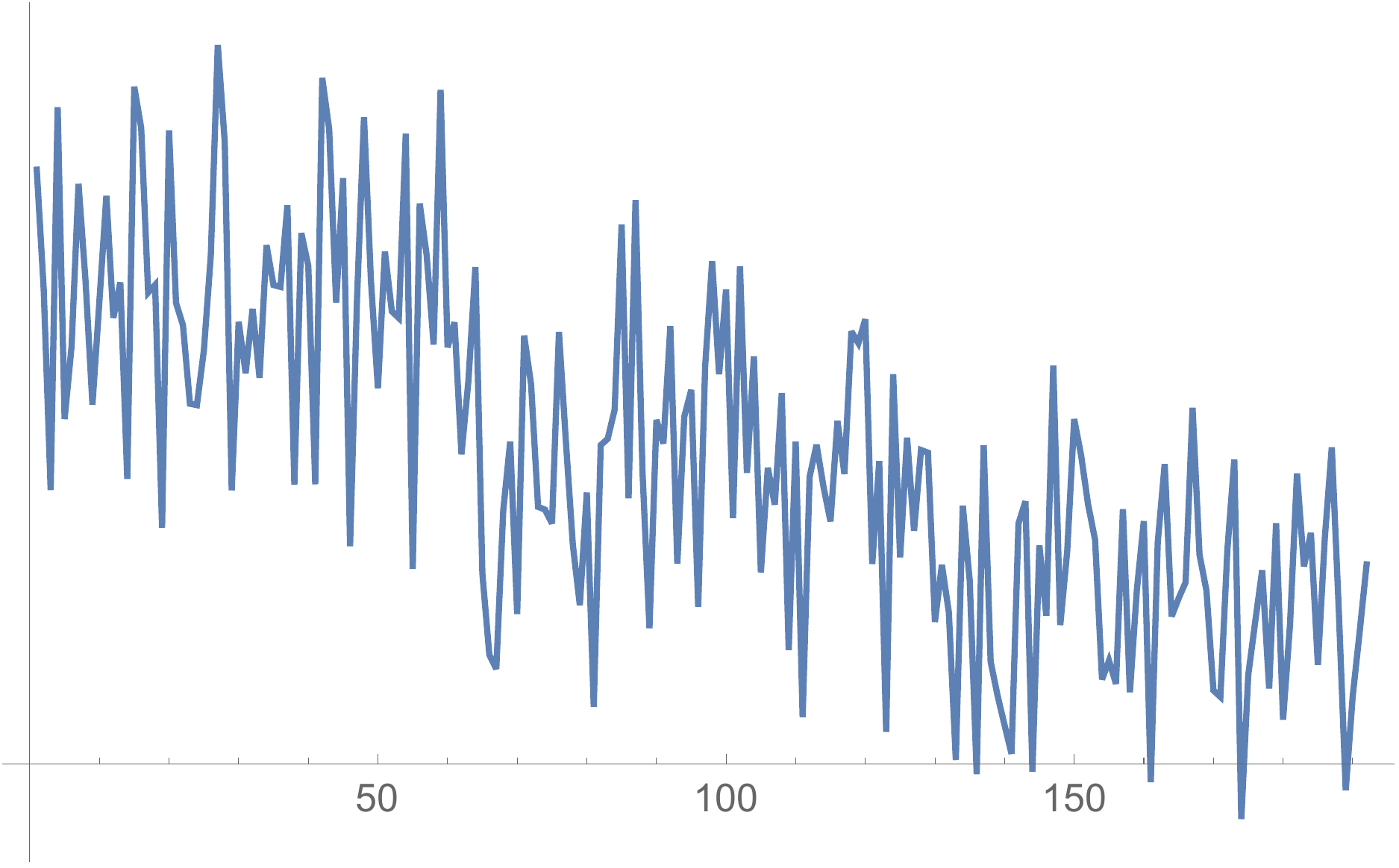}
\hfill
\includegraphics[width=0.325\textwidth]
{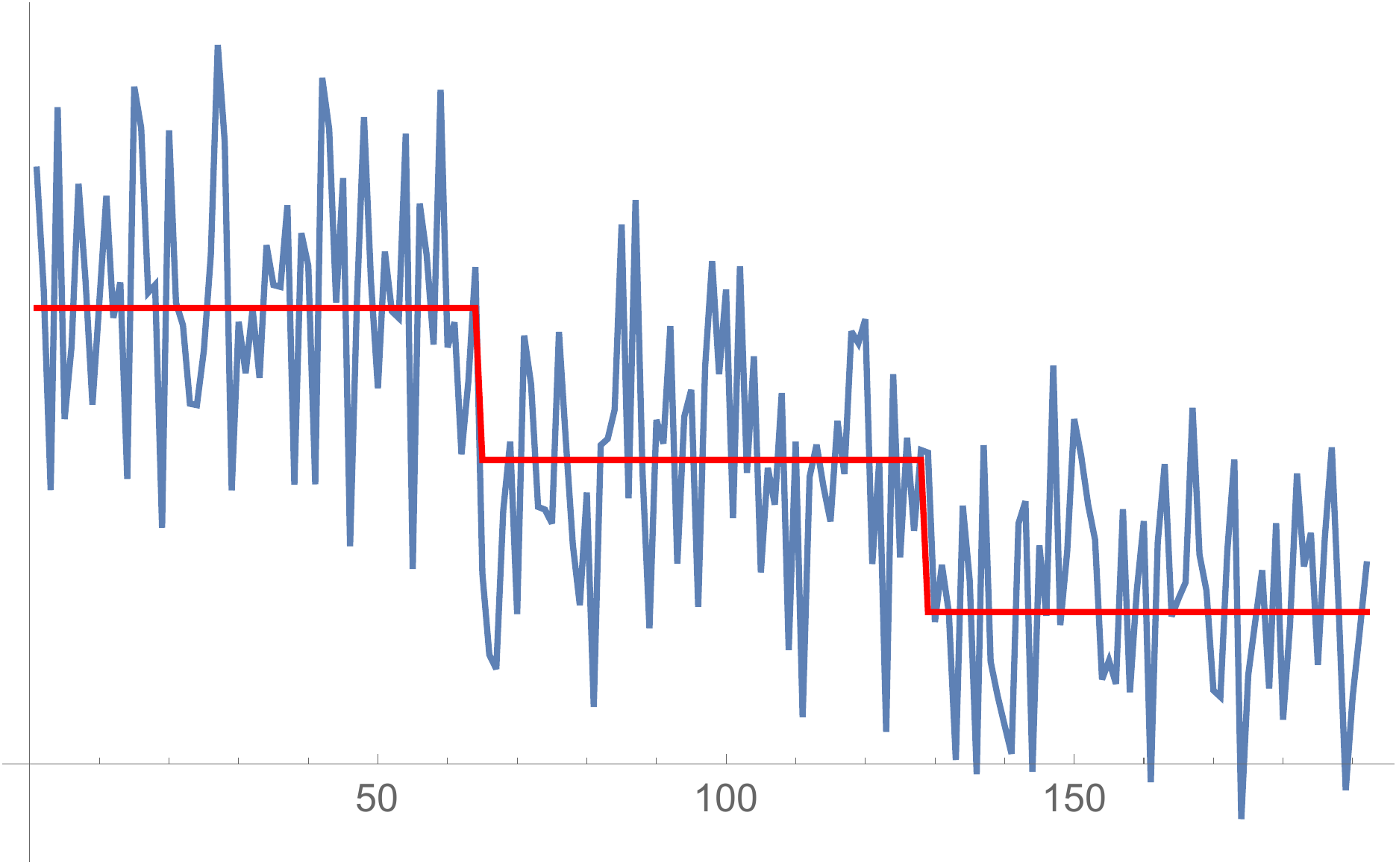}
\hfill
\includegraphics[width=0.325\textwidth]
{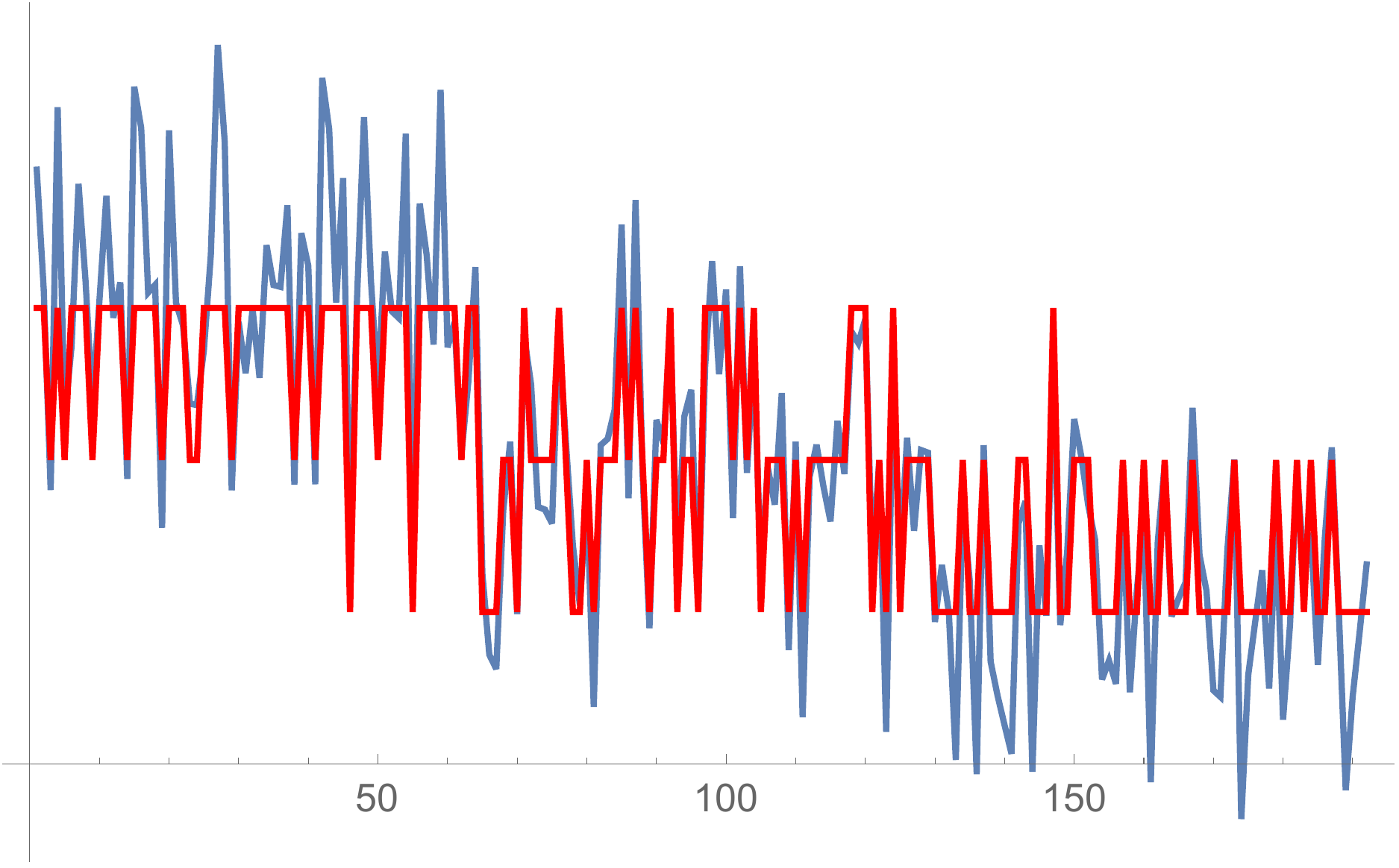}
}
\parbox{0.325\textwidth}{\centering\small (a)}\hfill
\parbox{0.325\textwidth}{\centering\small (b)}\hfill
\parbox{0.325\textwidth}{\centering\small (c)}
\caption{
(a) A noisy 1D signal used for the experiments of Section \ref{sec:Approximation-Property}. (b) The piecewise constant signal (red) used to generate (a) by superimposing noise. (c) \textit{Local} rounding to the next label and comparing to (b) indicates the noise level. Local rounding is equivalent to omitting regularization in the assignment flow by replacing the interacting similarity vectors $S(W)$ in \eqref{eq:assignment-flow} by the non-interacting likelihood vectors $L(W)$ of \eqref{eq:def-LW}.
}
\label{fig:signal-1D}
\end{figure}
\subsubsection{Approximation Property}
\label{sec:Approximation-Property}
We report a series of experiments for the 1D signal depicted by  Figure \ref{fig:signal-1D} using both the full and the linear assignment flow in order to check how closely the latter approximates the former. Then we discuss the linear assignment flow for the two 2D scenarios shown by Figure \ref{fig:labeling-scenarios}.

The parameter value $\rho=0.1$ for scaling in \eqref{eq:def-LW} the data, for all 1D experiments discussed below. This gave a larger weight to the `data term' so that -- in view of the noisy data (Fig.~\ref{fig:signal-1D}) -- the regularization property of the assignment flow \eqref{eq:assignment-flow}, in terms of the similarity vectors $S_{i}(W)$ interacting through \eqref{eq:def-SW}, was essential for labeling.

\begin{figure}
\centerline{
\includegraphics[width=0.325\textwidth]{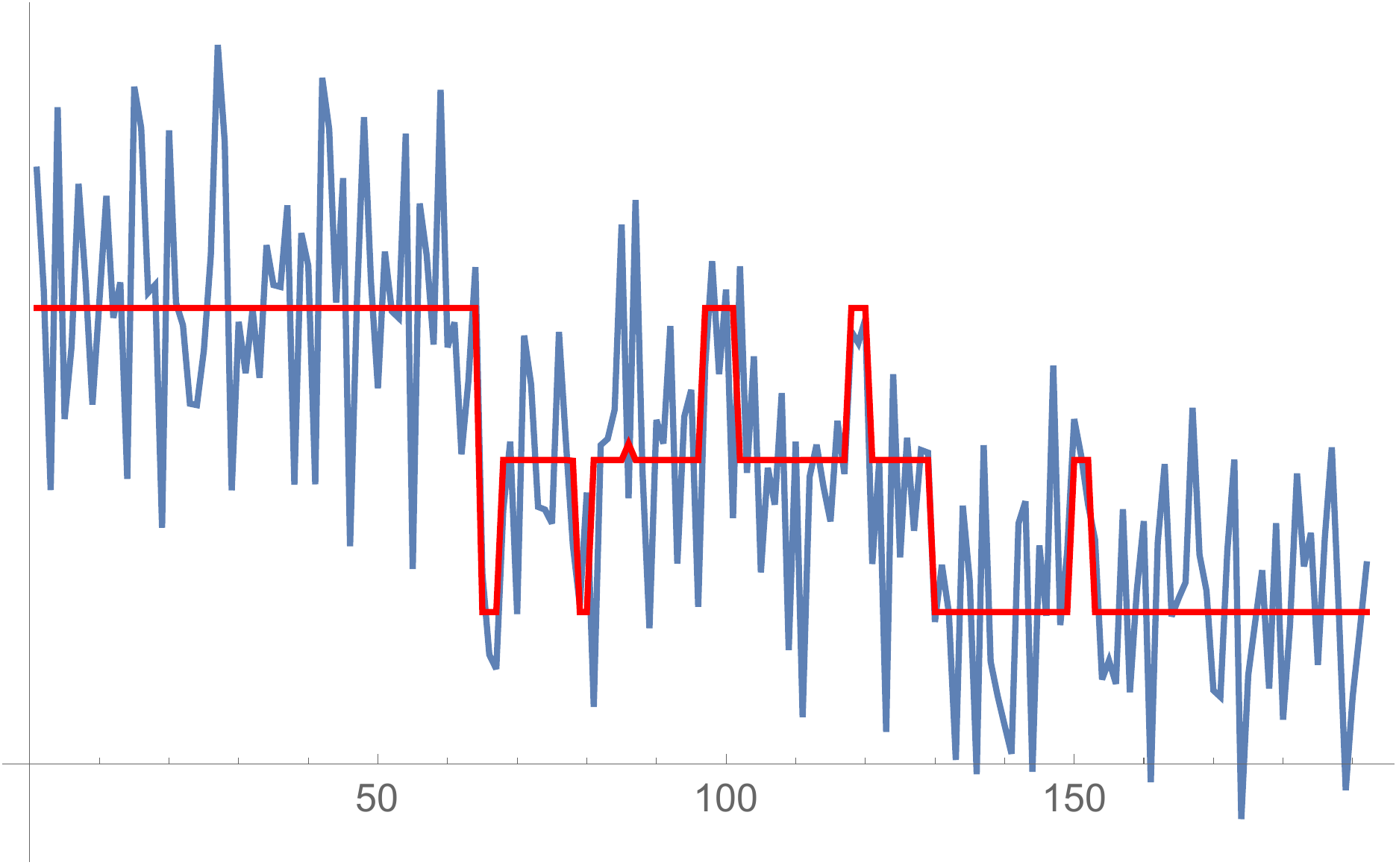}\hfill
\includegraphics[width=0.325\textwidth]{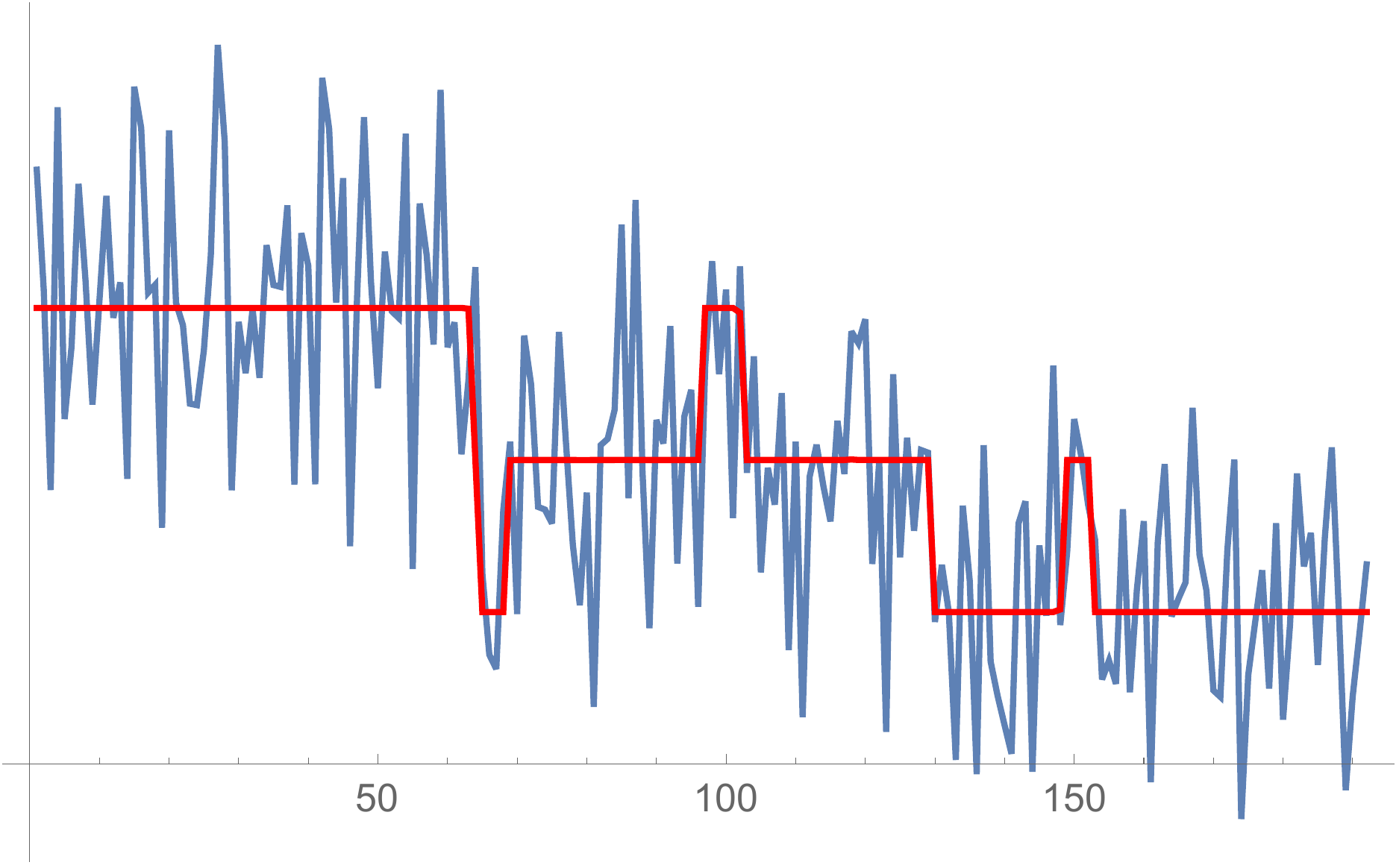}\hfill
\includegraphics[width=0.325\textwidth]{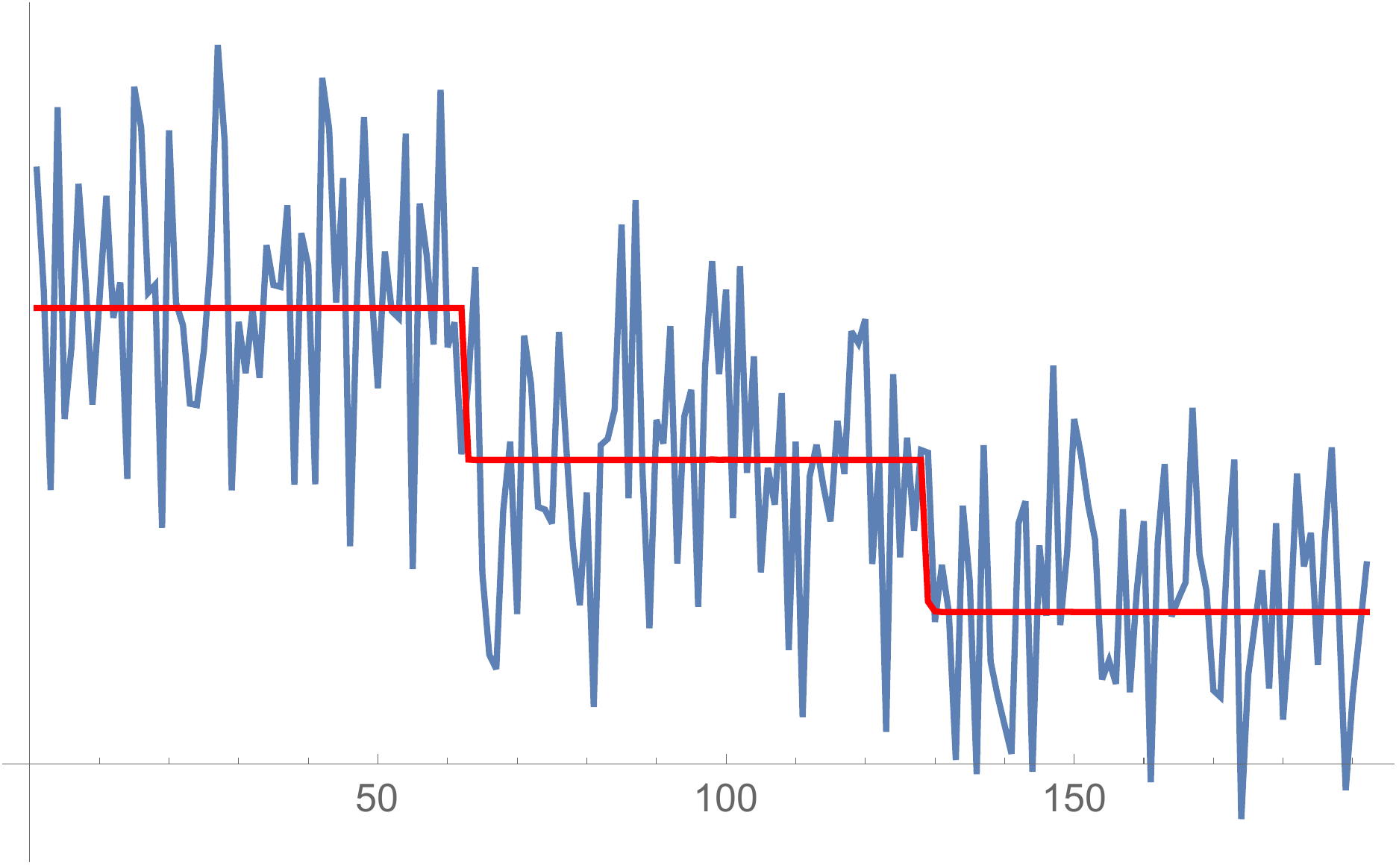}
}
\parbox{0.325\textwidth}{\centering\small $|\mc{N}_{i}|=3\;\text{(nonlinear flow)}$}\hfill
\parbox{0.325\textwidth}{\centering\small $|\mc{N}_{i}|=5\;\text{(nonlinear flow)}$}\hfill
\parbox{0.325\textwidth}{\centering\small $|\mc{N}_{i}|=9\;\text{(nonlinear flow)}$}
\centerline{
\includegraphics[width=0.325\textwidth]{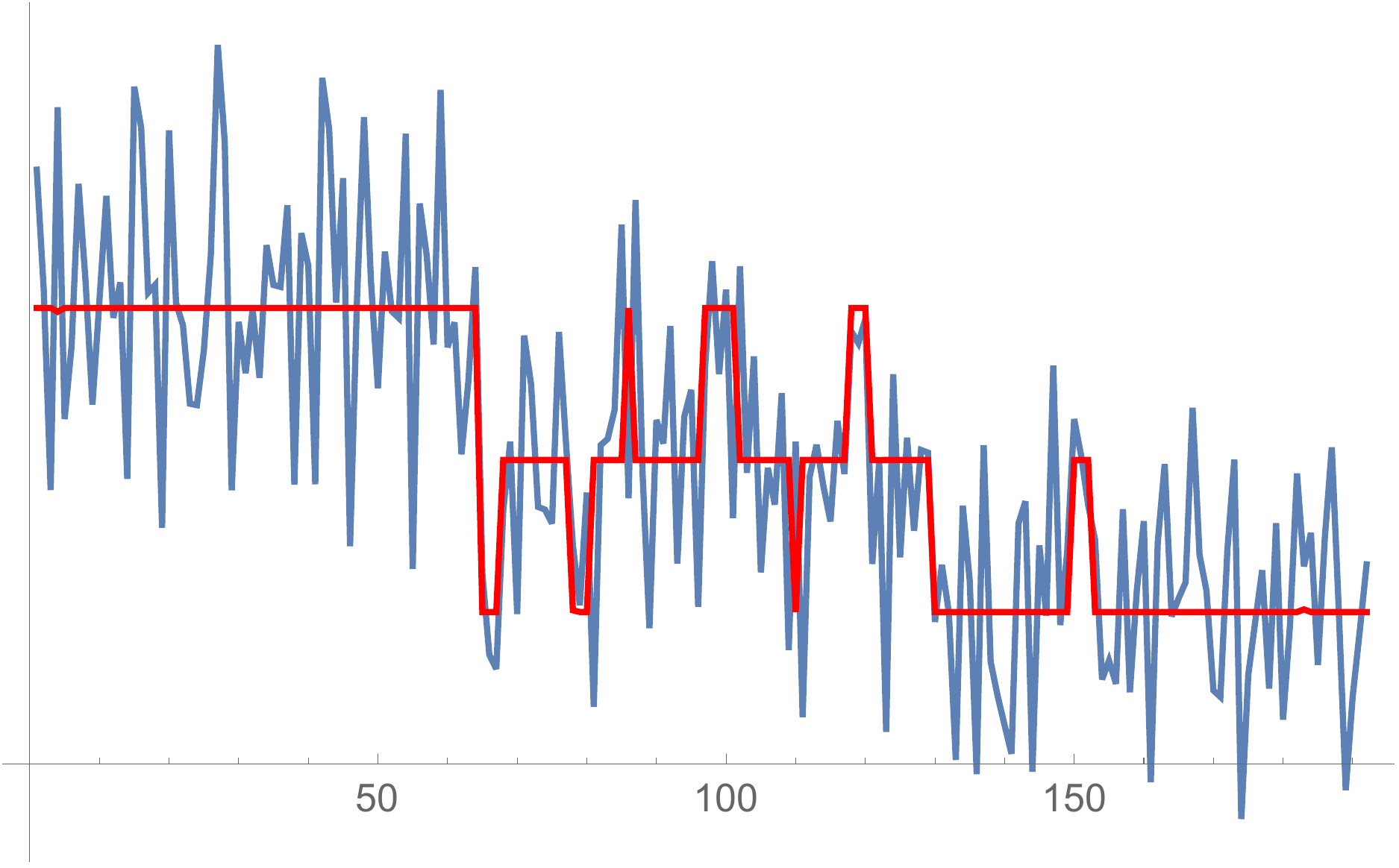}\hfill
\includegraphics[width=0.325\textwidth]{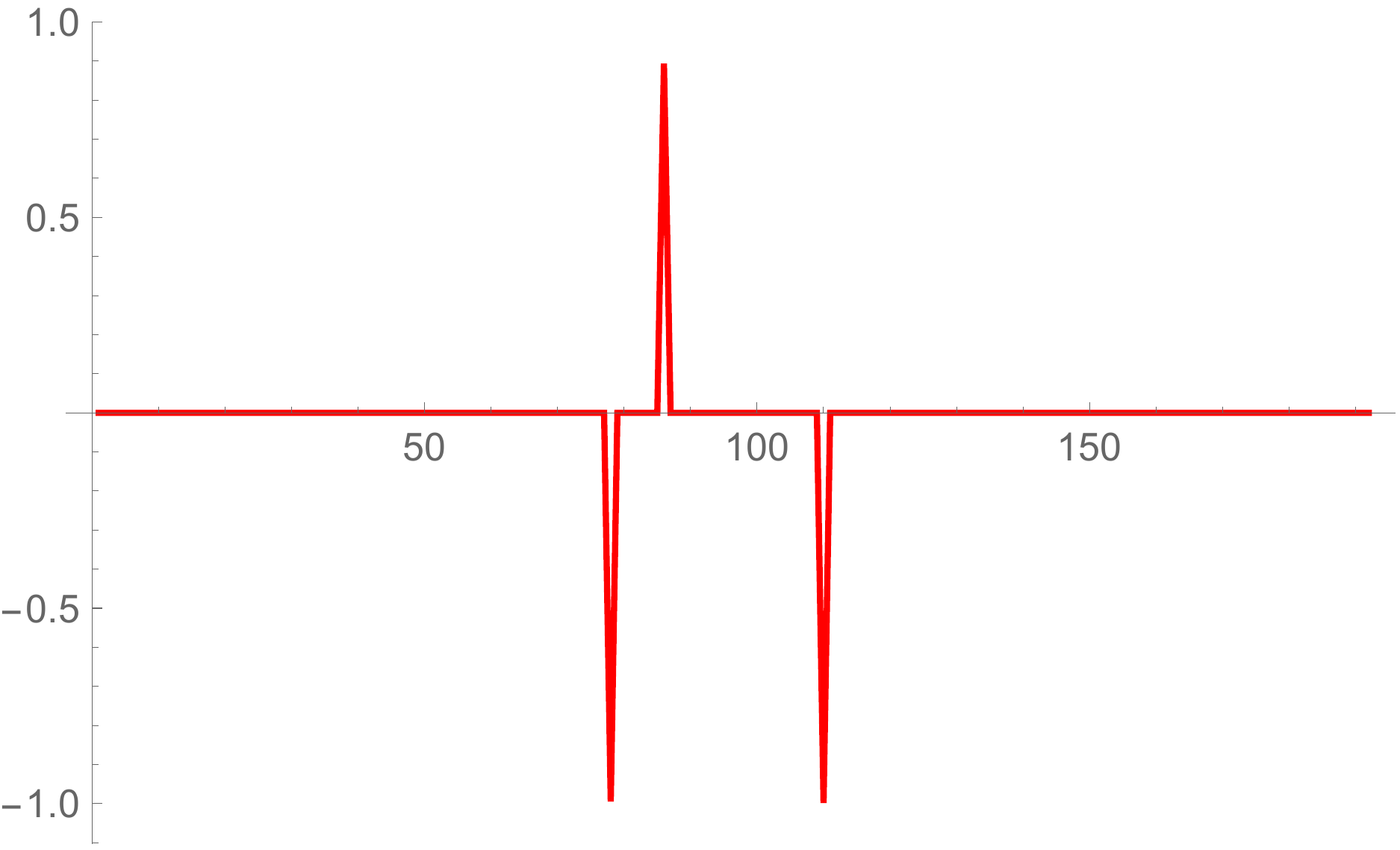}\hfill
\includegraphics[width=0.325\textwidth]{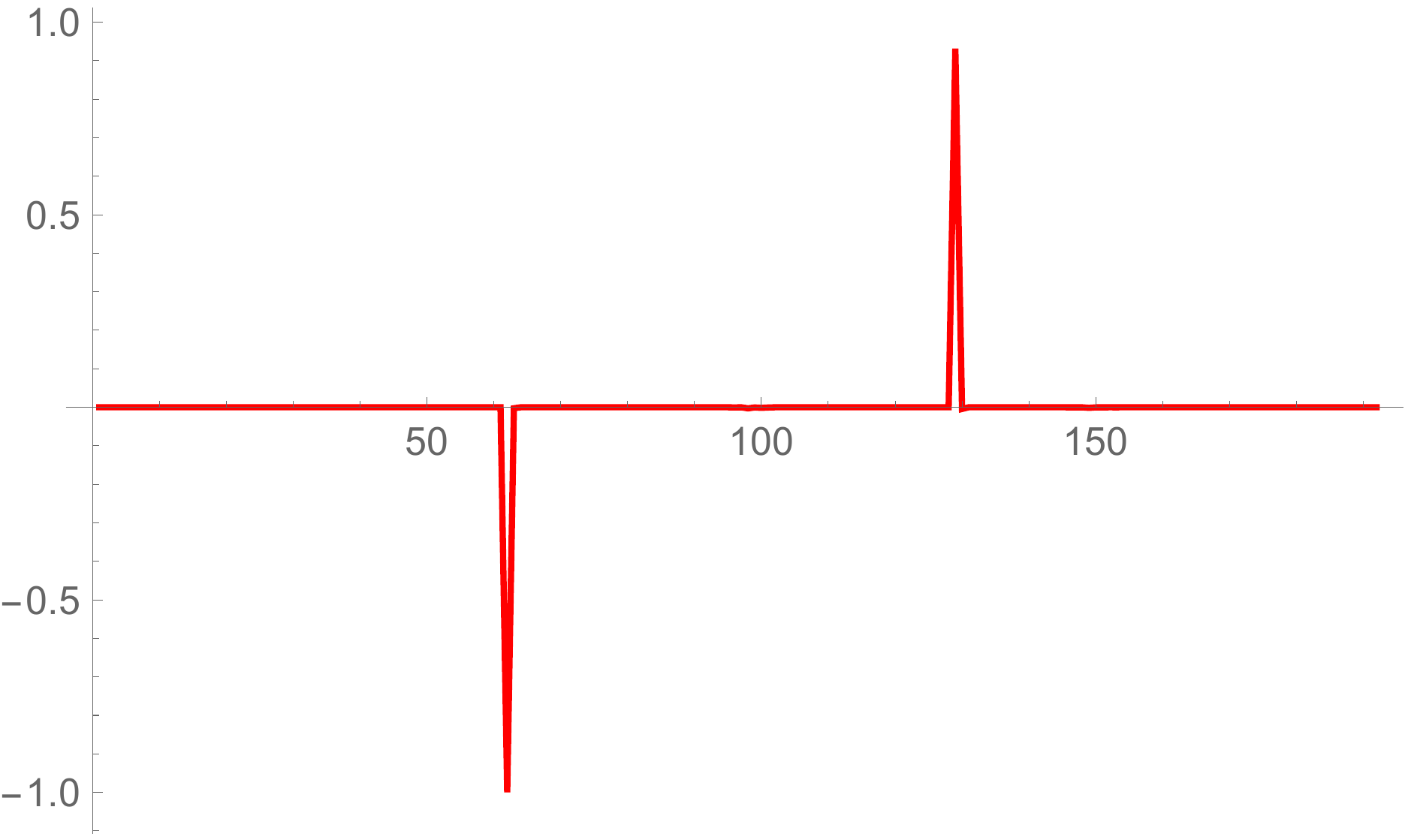}
}
\parbox{0.325\textwidth}{\centering\small $|\mc{N}_{i}|=3$ (linear flow)}\hfill
\parbox{0.325\textwidth}{\centering\small labeling error ($|\mc{N}_{i}|=3$)}\hfill
\parbox{0.325\textwidth}{\centering\small labeling error ($|\mc{N}_{i}|=9$)}
\caption{
\textbf{Nonlinear vs. linear assignment flow.}
\textsc{top row:} Labelings determined by the assignment flow as a reference for the linear assignment flow, using different neighborhood sizes $|\mc{N}_{i}|$. \textsc{bottom row:} Labeling determined by the linear assignment flow that differs in $3$ pixels from the corresponding above result of the full assignment flow. These errors and the two errors in the case $|\mc{N}_{i}|=9$ are shown in the center and right panel. These and further results listed as Table \ref{tab:linear-assignment-flow} show that the linear assignment flow achieves high-quality labelings.
}
\label{fig:1D-labeling-error}
\end{figure}
\begin{figure}
\centerline{
\includegraphics[width=0.275\textwidth]{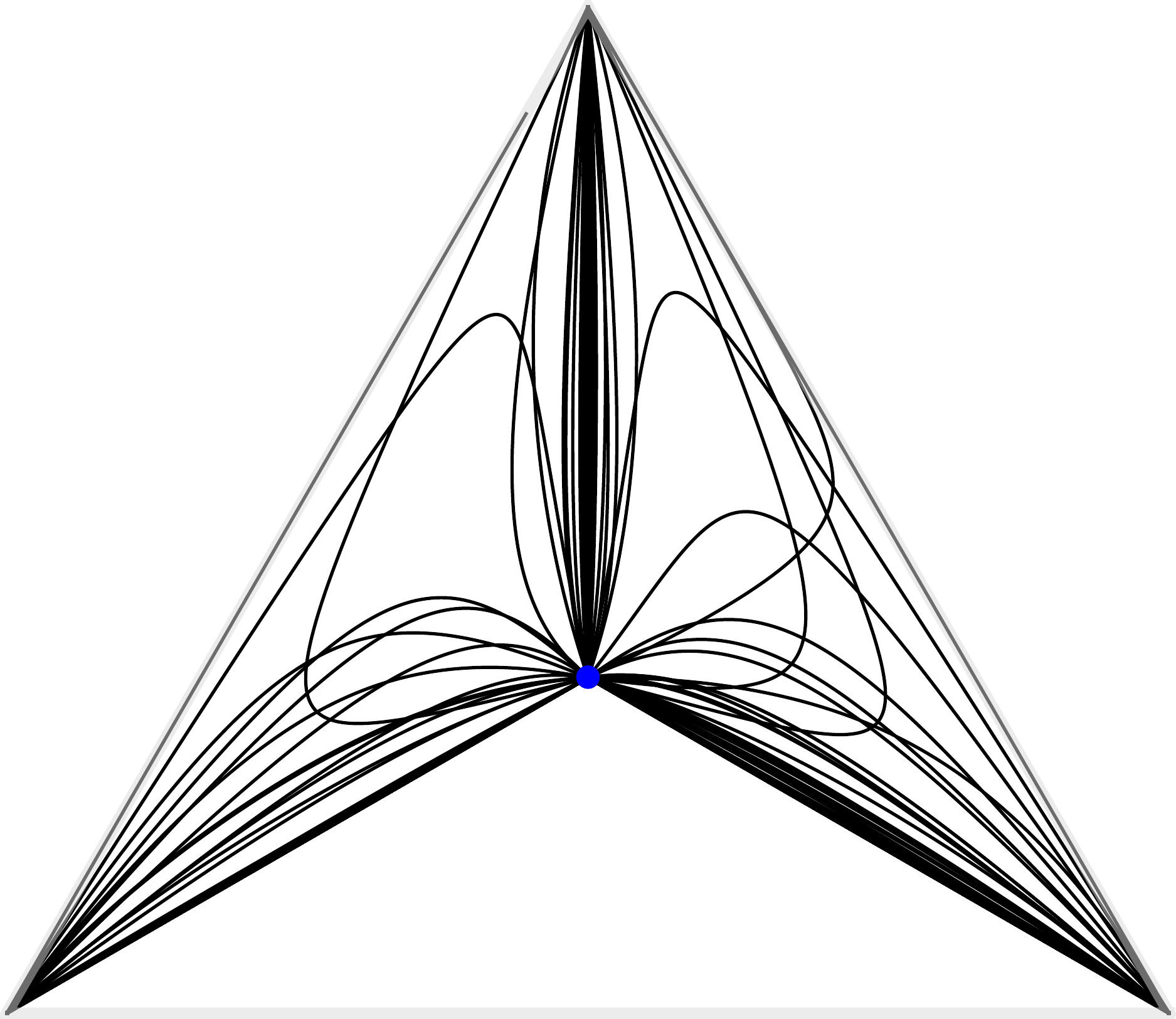}
\hspace{0.02\textwidth}
\includegraphics[width=0.275\textwidth]{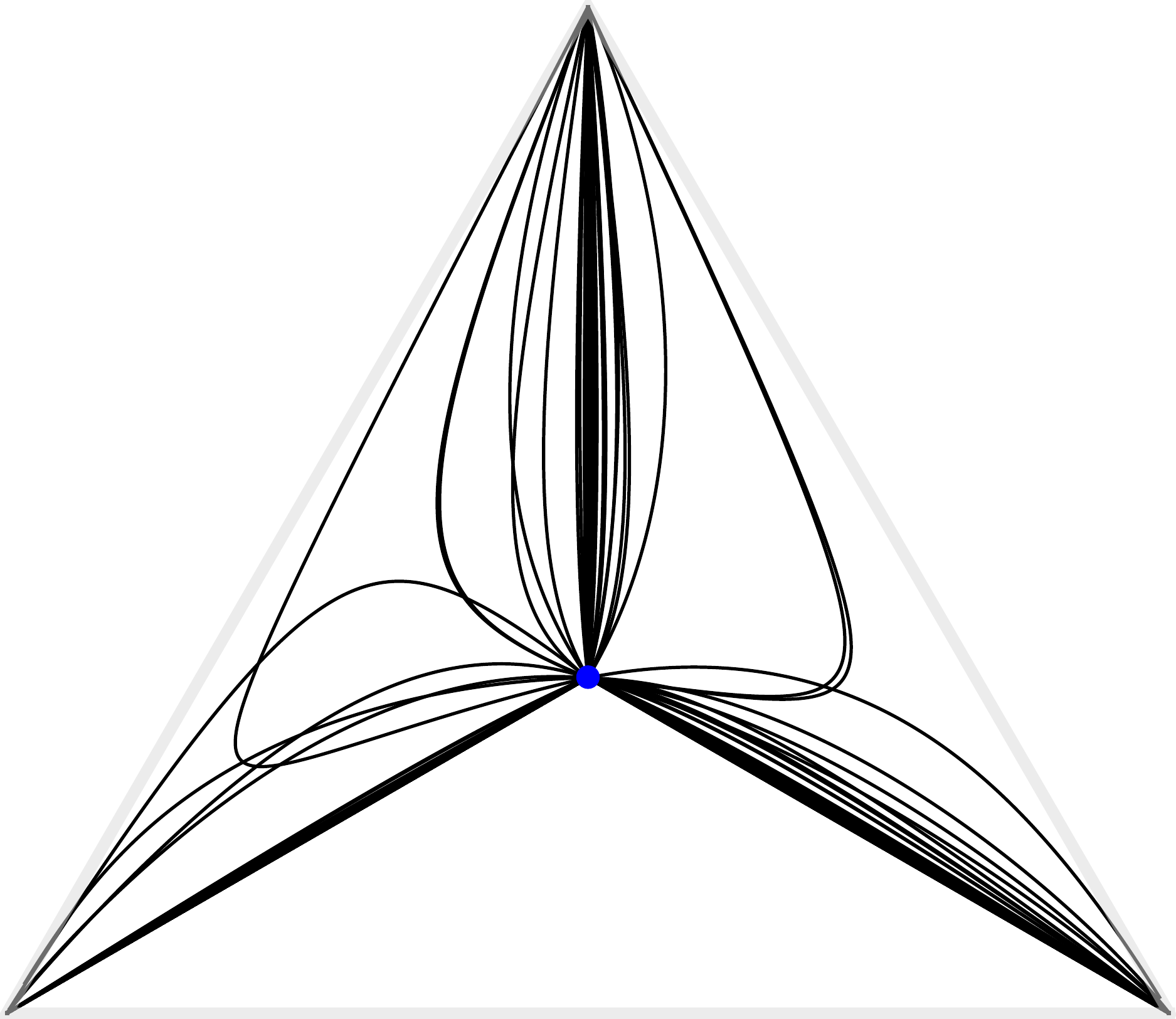}
\hspace{0.02\textwidth}
\includegraphics[width=0.275\textwidth]{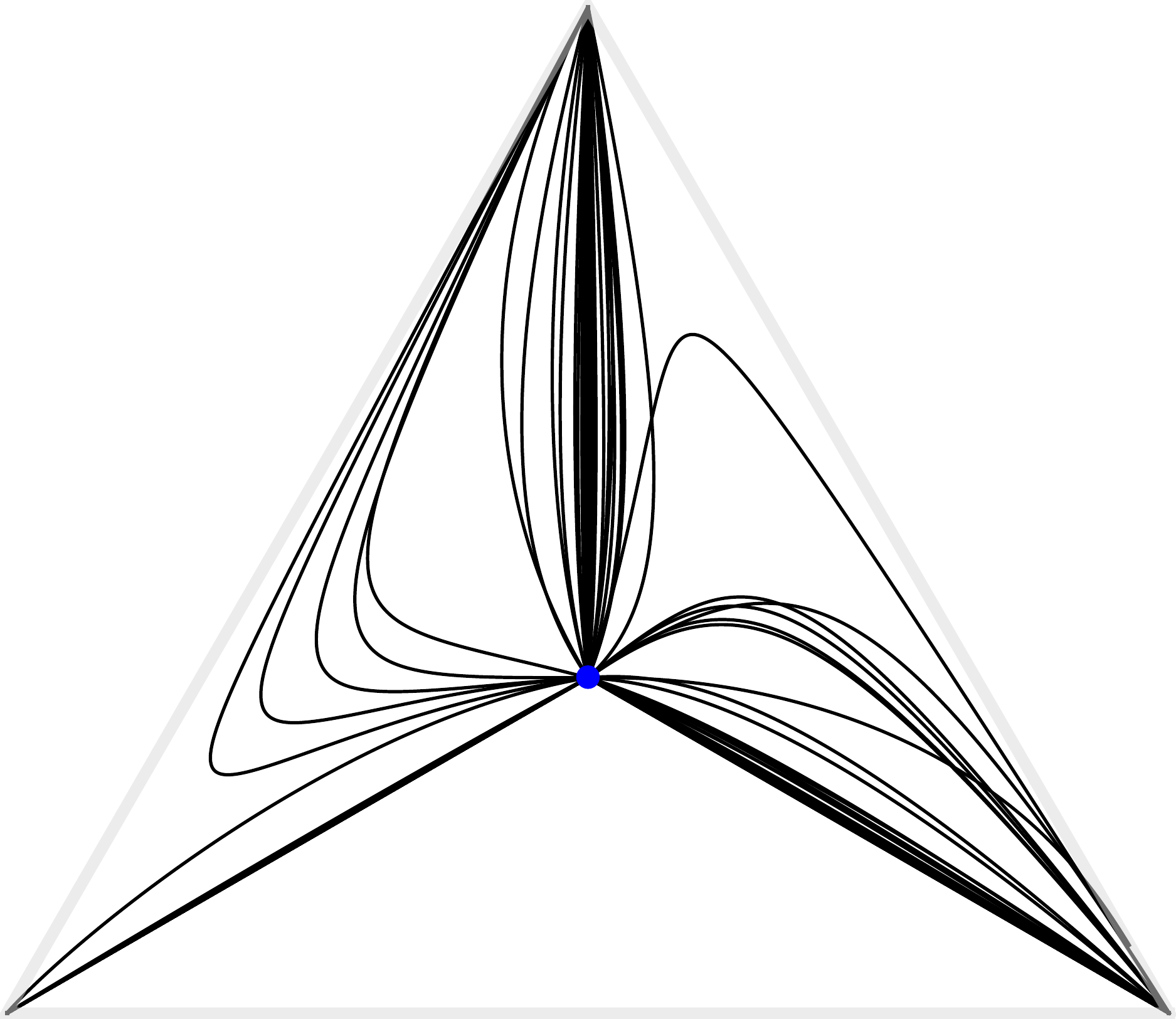}
}
\parbox{0.275\textwidth}{\centering\small $|\mc{N}_{i}|=3$}\hspace{0.02\textwidth}
\parbox{0.275\textwidth}{\centering\small $|\mc{N}_{i}|=5$}\hspace{0.02\textwidth}
\parbox{0.275\textwidth}{\centering\small $|\mc{N}_{i}|=9$}
\parbox{\textwidth}{\centering\small nonlinear assignment flow}
\centerline{
\includegraphics[width=0.275\textwidth]{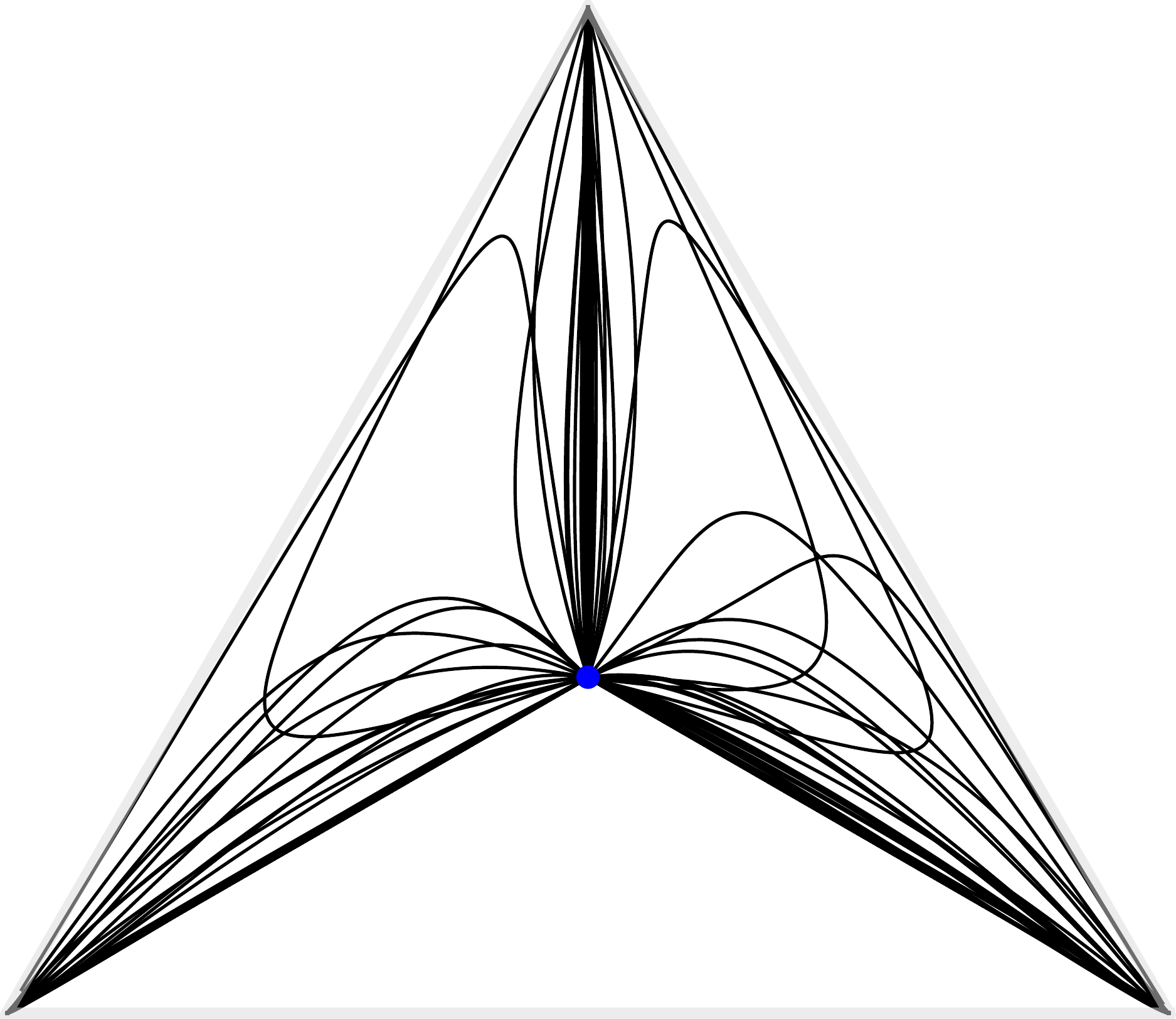}
\hspace{0.02\textwidth}
\includegraphics[width=0.275\textwidth]{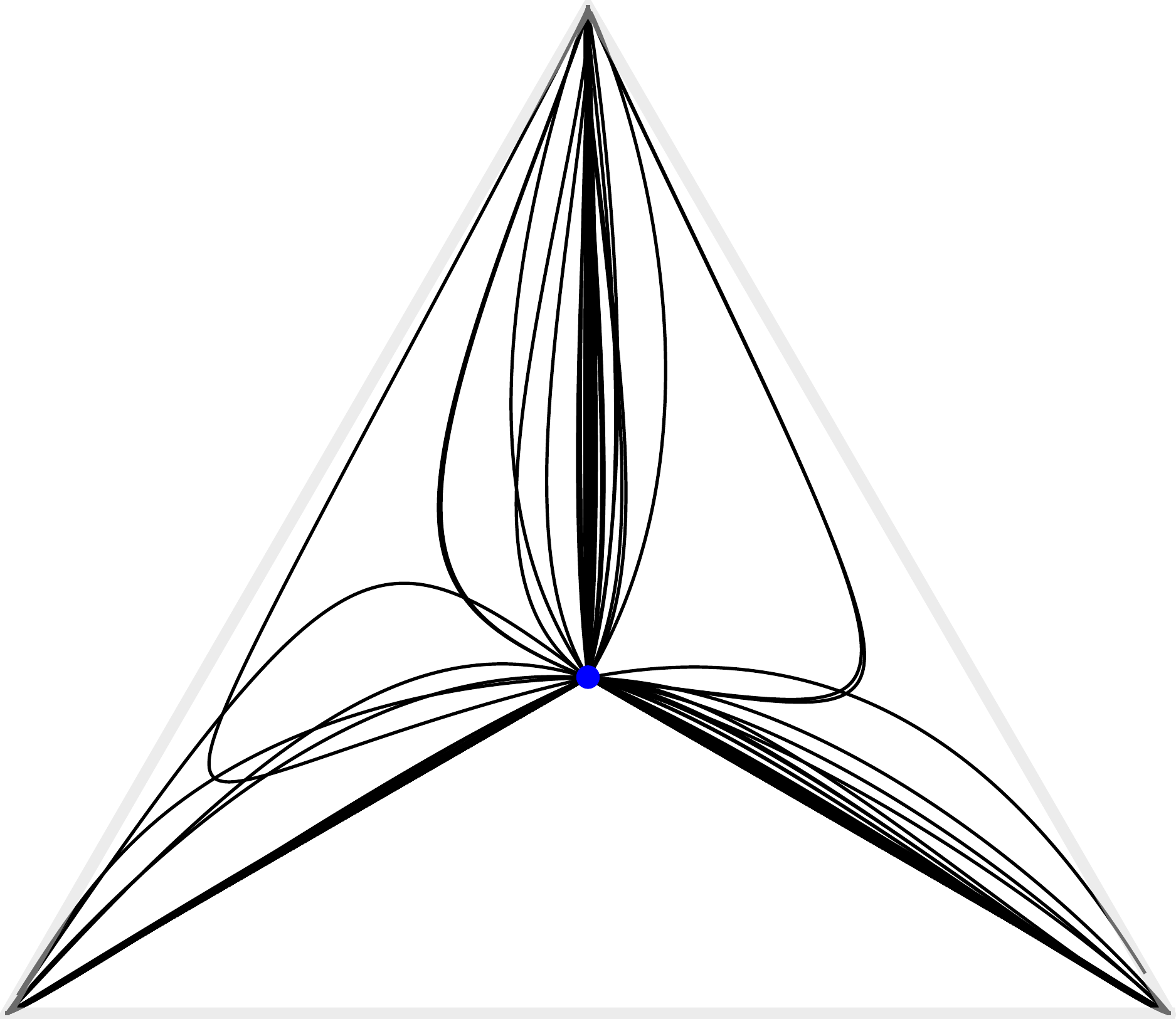}
\hspace{0.02\textwidth}
\includegraphics[width=0.275\textwidth]{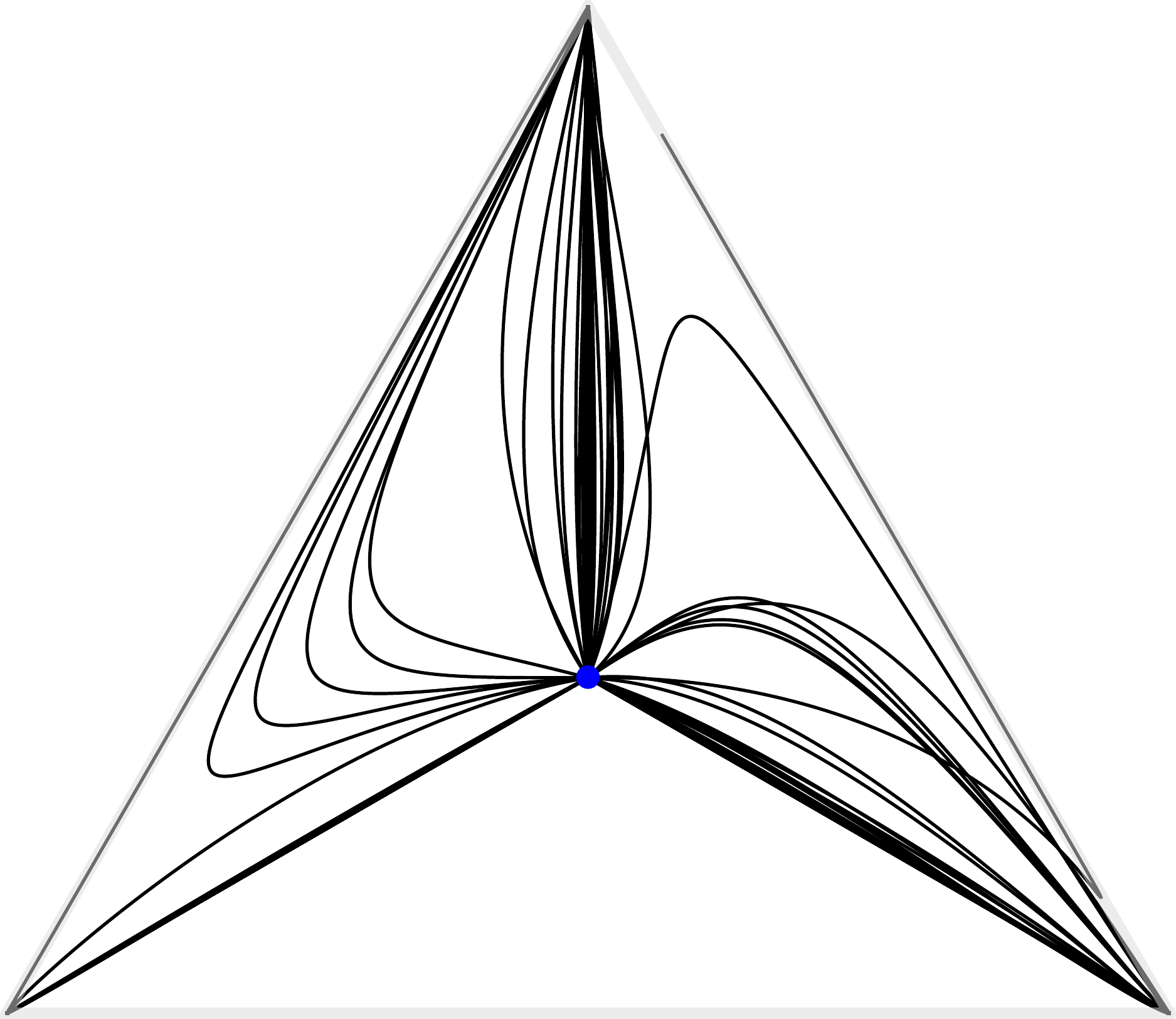}
}
\parbox{0.275\textwidth}{\centering\small $|\mc{N}_{i}|=3$}\hspace{0.02\textwidth}
\parbox{0.275\textwidth}{\centering\small $|\mc{N}_{i}|=5$}\hspace{0.02\textwidth}
\parbox{0.275\textwidth}{\centering\small $|\mc{N}_{i}|=9$}
\parbox{\textwidth}{\centering\small linear assignment flow}
\caption{
\textbf{Nonlinear vs. linear assignment flow.}
Comparison of the assignment flow \eqref{eq:assignment-flow} (top row) and the linear assignment flow \eqref{eq:ass-flow-linear} (bottom row) in terms of all $|I|$ solution curves $W_{i}(t),\, i \in I$, plotted in the $3$-label simplex. A major part of the trajectories approaches more or less directly a vertex, whereas another part changes the original direction due to regularization by geometric smoothing. Except for the cases with the minimal neighborhood $|\mc{N}_{i}|=3$, the similarity of both flows is apparent. This illustrates the reason for very small observed numbers of labeling errors, as listed and depicted by Table \ref{tab:linear-assignment-flow} and Figure \ref{fig:1D-labeling-error}.
}
\label{fig:simplex-plots}
\end{figure}

We first explain how the linearizations of the assignment flow were controlled.
According to Proposition \ref{prop:linear-ass-parametrization}, using the parametrization \eqref{eq:W-ass-flow-linear} and the linear ODE \eqref{eq:dot-V-ass-flow-linear}
is equivalent to the linear assignment flow \eqref{eq:ass-flow-linear}. Using again the parametrization \eqref{eq:W-ass-flow-linear} and repeating the proof of Prop.~\ref{prop:linear-ass-parametrization} shows that the full assignment flow \eqref{eq:assignment-flow} is locally governed by the \textit{nonlinear} ODE
\begin{equation}
\dot V = \Pi_{W_{0}}\big(S(\exp_{W_{0}}(V))\big),\qquad V(0)=0.
\end{equation}
Taking into account \eqref{eq:s0-S0-natural} and subtracting the right-hand side of the approximation \eqref{eq:dot-V-ass-flow-linear} from the above right-hand side gives
\begin{equation}\label{eq:S-exp-V-approximation}
\Pi_{W_{0}}\big(S(\exp_{W_{0}}(V))\big)
- \Pi_{W_{0}}(s_{0}+S_{0} V)
= \Pi_{W_{0}}\big(S(\exp_{W_{0}}(V)) - S(W_{0}) - dS_{W_{0}}(V)\big),
\end{equation}
which shows that this approximation deteriorates with increasingly large tangent vectors $V$. As a consequence, we first solved the linear flow \eqref{eq:ass-flow-linear} using \eqref{eq:linear-ass-parametrization} \textit{without} updating the point of linearization $W_{0}=\eins_{\mc{W}}$ and fixed after termination at $k_{\mrm{end}}$ (= number of required outer iterations) the constant
\begin{equation}
\|V\|_{\max} = \max_{i \in I}\|V_{i}^{(k_{\mrm{end}})}\|.
\end{equation}
Then we solved the linear assignment flow again and updated the linearization point $W_{0}$ in view of \eqref{eq:S-exp-V-approximation} whenever
\begin{equation}\label{eq:def-V-s-threshold}
\max_{i \in I}\|V_{i}^{(k)}\| > \frac{\|V\|_{\max}}{c},\qquad c \geq 1,
\end{equation}
using the parameter $c$ to control the number of linearizations: a single linearization and no linearization update if $c=1$ and an increasing number of updates for larger values of $c$. We updated components of the linearization point $W_{0,i}$ by $W_{i}(h),\, i \in I$ only when $\min_{j \in J} W_{ij}(h) > 0.01$, in order to keep linearization points inside the simplex, in view of the entries \eqref{eq:dSik-entries} of $dS_{W_{0}}$ normalized by components of $W_{0k}$.

After termination, the induced labelings were compared to those of the full assignment flow, and the number of wrongly assigned labels was taken as a quantitative measure for the approximation property of the linear assignment flow:
\begin{table}[h]
\centering
\begin{tabular}{c|ccccc}
$|\mc{N}_{i}|$ &
$c=1$ & $c=2$ & $c=3$ & $c=4$ & $c=5$ \\ \hline
3 &
1/3 & 4/3 & 7/3 & 10/1 & 13/0 \\
5 &
1/0 & \\
9 &
1/2 & 5/0
\end{tabular}
\caption{Approximation property of the linear assignment flow.
The entries $x/y$ specify the number $x$ of linearizations and the number $y$ of wrongly assigned labels (out of 192 assigned labels), depending on the neighborhood size $|\mc{N}_{i}|$ (strength of regularization) and the parameter $c$ specifying the tangent space threshold \eqref{eq:def-V-s-threshold}.
}
\label{tab:linear-assignment-flow}
\end{table}
Except for the minimal neighborhood size $|\mc{N}_{i}|=3$, a single linearization almost suffices to obtain a correct labeling. Overall, the maximal number of $3$ labeling errors (out of $192$) is very small, and these errors merely correspond to shifts by a single pixel position of the signal transition in the case $|\mc{N}_{i}|=9$ (see Figure \ref{fig:1D-labeling-error}). We conclude that for supervised labeling, the linear assignment flow \eqref{eq:ass-flow-linear} (which is nonlinear(!) -- cf.~Remark \ref{rem:ass-flow-linear}) indeed captures a major part of nonlinearity of the full assignment flow \eqref{eq:assignment-flow}. Figure \ref{fig:simplex-plots} illustrates the similarity of the two flows (and the dissimilarity in the case $|\mc{N}_{i}|=3$) in terms of all $|I|=192$ sequences $(W_{i}^{(k)}),\,i \in |I|$, plotted as piecewise linear trajectories.

We cannot assure, however, that this approximation property persists in more general cases (cf.~Remark \ref{rem:non-basic-cases}) whose study is beyond the scope of the present paper.

\begin{figure}
\centerline{
\includegraphics[width=0.2\textwidth]{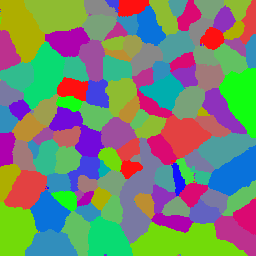}\hspace{0.05\textwidth}
\includegraphics[width=0.2\textwidth]{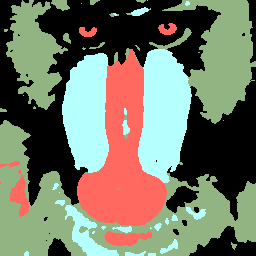}
\includegraphics[width=0.2\textwidth]{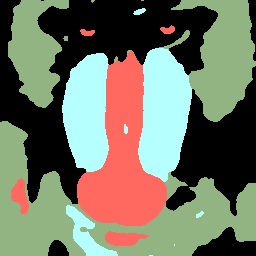}
}
\caption{
\textbf{Linear assignment flow.}
Labeling results for the two scenarios of Figure \ref{fig:linear-BE}, using the \textit{linear} assignment flow with a single linearization at the barycenter and the \textit{implicit} Euler scheme for numerical integration. Comparison with the labeling results of the \textit{nonlinear} assignment flow (Fig.~\ref{fig:embedded-RKMK-a}(a), Fig.~\ref{fig:embedded-RKMK-b} `ground truth') demonstrates a remarkable approximation property of the linear assignment flow.
}
\label{fig:linear-BE}
\end{figure}
\vspace{0.5cm}
We now turn to the scenarios shown by \ref{fig:labeling-scenarios}. Figure \ref{fig:linear-BE} shows the results obtained using the implicit Euler scheme and the \textit{same} parameter settings that were used to integrate the nonlinear flow, to obtain the ground truth flows and results depicted by  Figure \ref{fig:embedded-RKMK-a}(a) and Figure \ref{fig:embedded-RKMK-b}, left-most panel, respectively. Comparing the labelings returned by the linear and nonlinear assignment flow, respectively, confirms the discussion of the 1D experiments detailed above: The results agree except for a very small subset of pixels close to signal transitions which are immaterial for subsequent image interpretation.

The results shown by Figure \ref{fig:linear-BE} served as ground truth for studying the \textit{explicit} numerical schemes of Sections \ref{sec:exp-RK-adaptive} and \ref{sec:exp-exponential-integrator} for integrating the linear assignment flow.

\begin{figure}
\centerline{
\includegraphics[width=0.2\textwidth]{Figures/31-colors-LinearBE-Labels}\hfill
\includegraphics[width=0.35\textwidth]{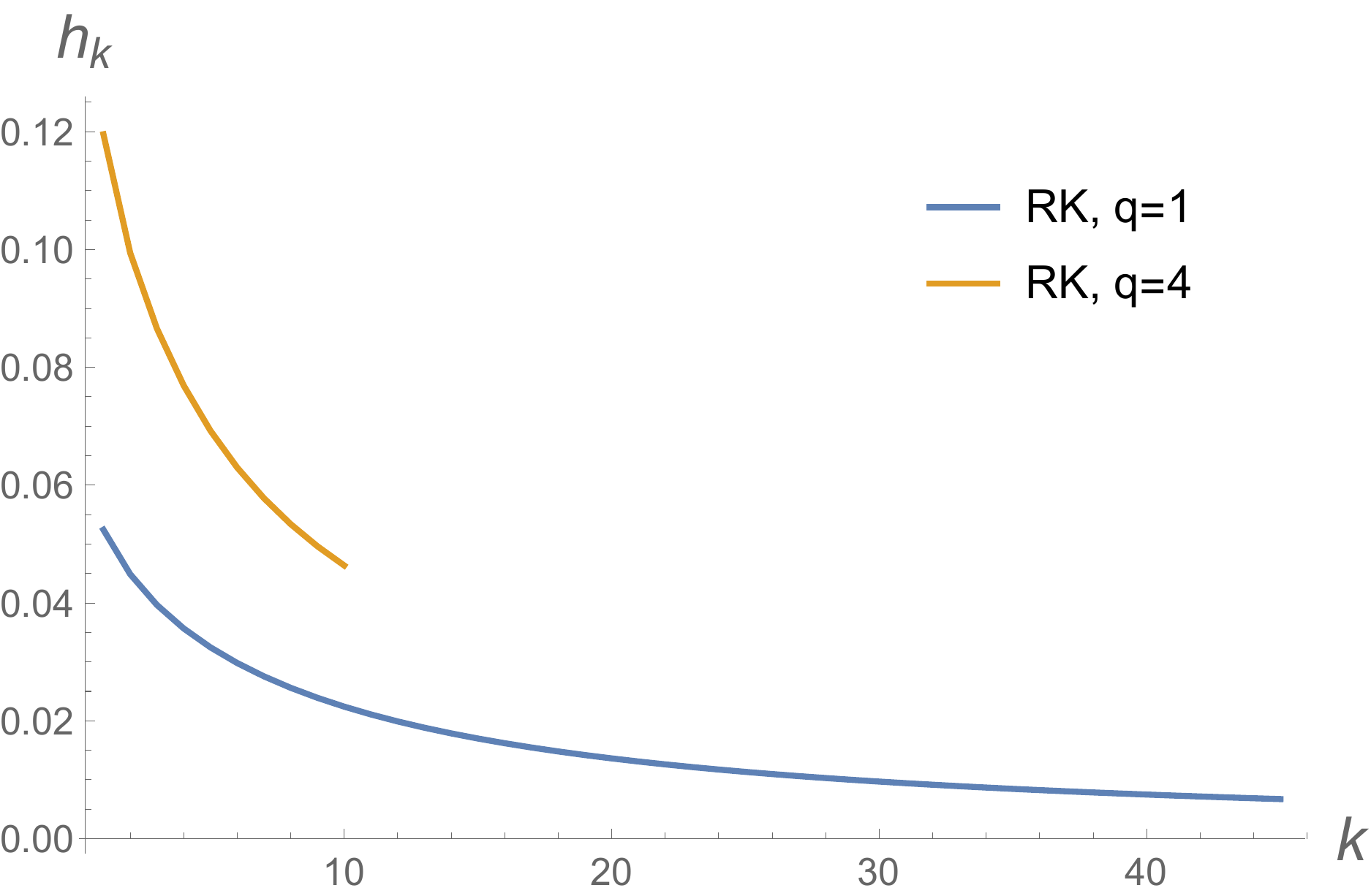}\hfill
\includegraphics[width=0.2\textwidth]{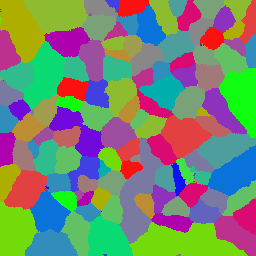}\hfill
\includegraphics[width=0.2\textwidth]{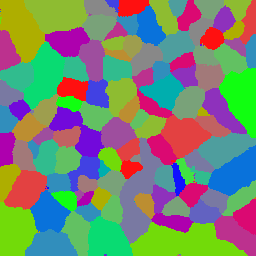}
}
\centerline{
\parbox{0.2\textwidth}{\centering\small (a)} \hfill
\parbox{0.35\textwidth}{\centering\small (b)} \hfill
\parbox{0.2\textwidth}{\centering\small (c)} \hfill
\parbox{0.2\textwidth}{\centering\small (d)}
}
\caption{
\textbf{Linear assignment flow, adaptive RK-schemes.} Results of the linear assignment flow \eqref{eq:ass-flow-linear} based on the parametrization \eqref{eq:linear-ass-parametrization}, the RK schemes \eqref{eq:RK-Euclidean} of order $q=1$ (FE) and $q=4$ (RK4), and adaptive step size selection based on the local error estimate \eqref{eq:local-error-bound}. The labeling results (c), (d) for $q=1$, $q=4$ are almost identical to ground truth (a) from Figure \ref{fig:linear-BE}.
}
\label{fig:31-colors-adaptive-RK}
\end{figure}
\begin{figure}
\begin{minipage}{0.64\textwidth}
\centerline{
\includegraphics[width=0.32\textwidth]{Figures/Mandrill-N1-LinearBE-Labels}\hfill
\includegraphics[width=0.32\textwidth]{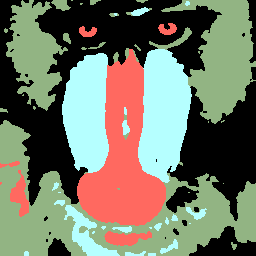}\hfill
\includegraphics[width=0.32\textwidth]{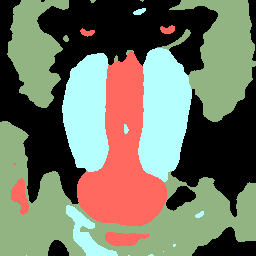}
}
\vspace{0.01\textwidth}
\centerline{
\includegraphics[width=0.32\textwidth]{Figures/Mandrill-N2-LinearBE-Labels}\hfill
\includegraphics[width=0.32\textwidth]{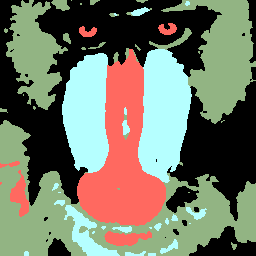}\hfill
\includegraphics[width=0.32\textwidth]{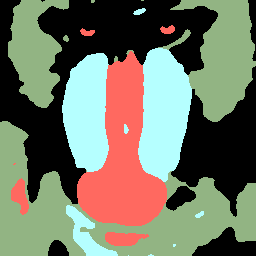}
}
\centerline{
\parbox{0.32\textwidth}{\centering\small ground truth}
\hfill
\parbox{0.32\textwidth}{\centering\small RK: $|\mc{N}_{i}|=3 \times 3$}
\hfill
\parbox{0.32\textwidth}{\centering\small RK: $|\mc{N}_{i}|=5 \times 5$}
}
\end{minipage}
\begin{minipage}{0.35\textwidth}
\includegraphics[width=\textwidth]{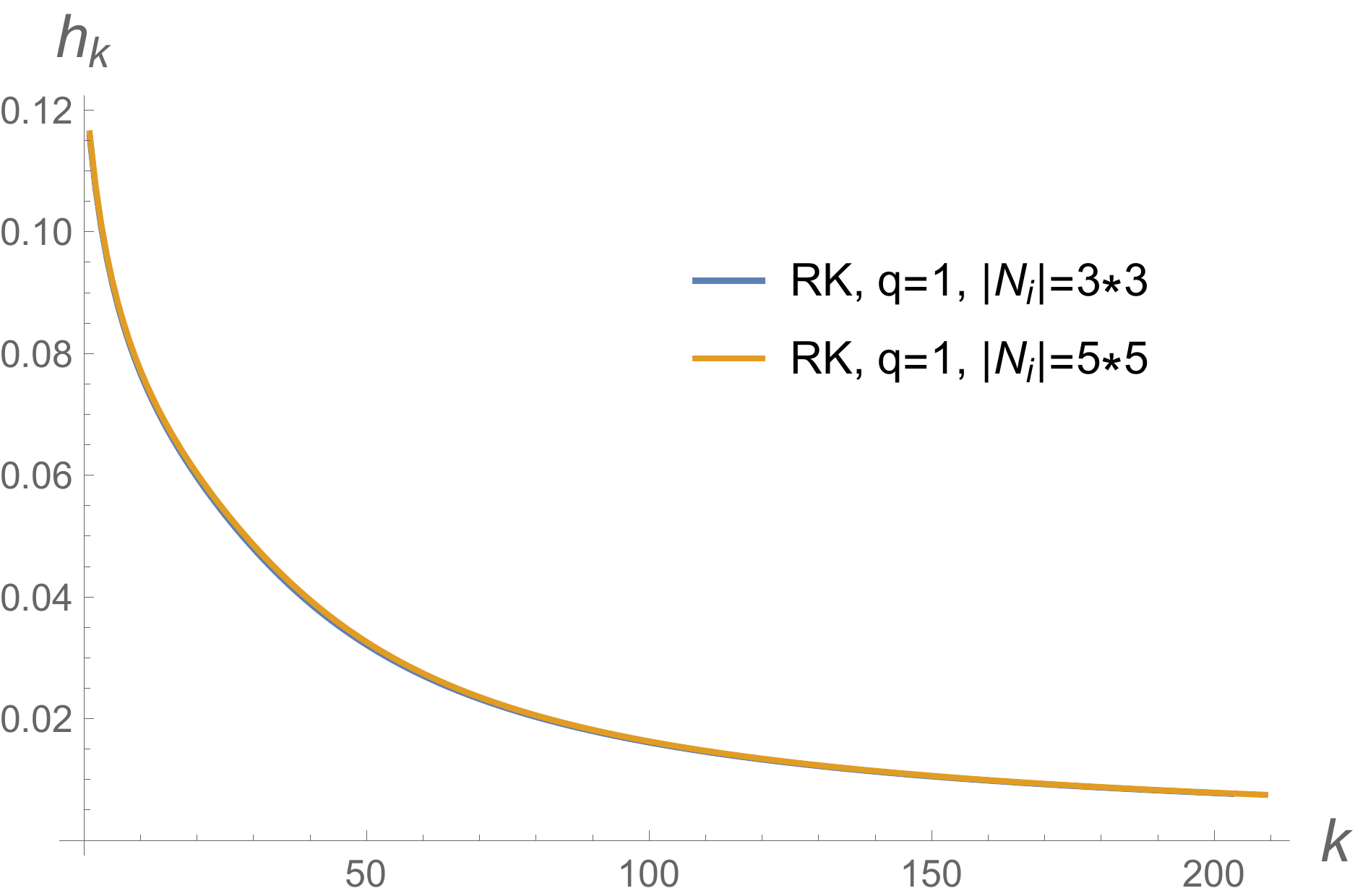}
\includegraphics[width=\textwidth]{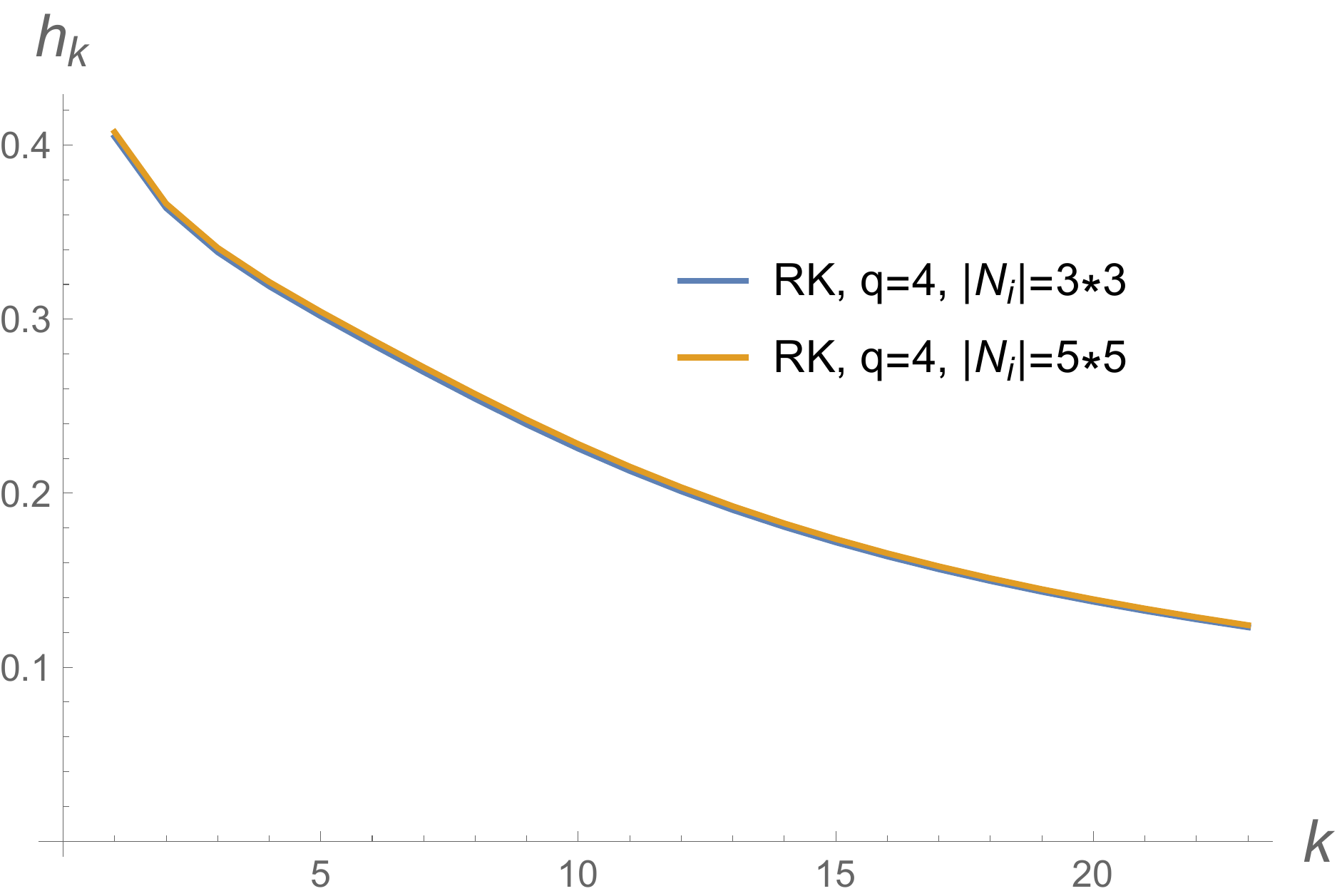}
\end{minipage}
\caption{
\textbf{Linear assignment flow, adaptive RK-schemes.}
Results of the linear assignment flow \eqref{eq:ass-flow-linear} based on the parametrization \eqref{eq:linear-ass-parametrization}, the RK schemes \eqref{eq:RK-Euclidean} of order $q=1$ (FE) and $q=4$ (RK4), and adaptive step size selection based on the local error estimate \eqref{eq:local-error-bound}. For fixed $q$, increasing the neighborhood size $|\mc{N}_{i}|$ has almost no effect: the step size sequences agree up to the second digit. The labeling results (c), (d) for $q=1$, $q=4$ are almost identical to ground truth from Figure \ref{fig:linear-BE}.
}
\label{fig:mandrill-adaptive-RK}
\end{figure}

\subsubsection{Adaptive RK Schemes}
\label{sec:exp-RK-adaptive}

We evaluated the adaptive RK schemes (FE) of order $q=1$ and (RK4) of order $q=4$, due to \eqref{eq:RK-Euclidean}, supposed to integrate the linear ODE \eqref{eq:dot-V-ass-flow-linear}, after rearranging the polynomials of \eqref{eq:RK-Euclidean} in Horner form.

Figures \ref{fig:31-colors-adaptive-RK} and \ref{fig:mandrill-adaptive-RK} show the results for the \textit{linear} assignment flow based on a single linearization at the barycenter, using the results shown by Figure \ref{fig:linear-BE} as ground truth.  The step sizes $h_{k}$ were computed at each iteration $k$ using the local error estimate \eqref{eq:local-error-bound} such that $\frac{1}{|I|^{1/2}} \|V(t_{k+1})-V^{(k+1)}\| \leq \tau = 0.01$, that is on average $\|V_{i}(t_{k+1})-V_{i}^{(k+1)}\| \leq \tau$ for all pixels $i \in I$. The spectral norm $\|A\|$ was computed beforehand using the basic power iteration.

As explained above when the criterion \eqref{eq:hk-RK-linear} was introduced, step sizes must decrease due to the increasing norms $\|V^{(k)}\|$, in order to keep bounded the local integration error. Furthermore, in agreement with Figure \ref{fig:RK-linear-boundFactor}, raising the order $q$ of the integration scheme leads to significantly larger step sizes and hence to smaller total numbers of iterations, at the cost of more expensive iterative steps. Yet, roughly taking these additional costs into account by multiplying the total iteration numbers for $q=4$ by $4$, indicates that raising the order $q$ reduces the overall costs. In this respect our findings for the linear assignment flow differ from the corresponding findings for the nonlinear assignment flow and the embedded RKMK schemes, discussed in Section \ref{sec:exp-nonlinear}.

\subsubsection{Exponential Integrator}
\label{sec:exp-exponential-integrator}

For integrating the linearized assignment flow with exponential integrators,
we consider equation~\eqref{eq:V-by-phi1} and the Krylov space approximation~\eqref{eq:vphi-1-approximation}
\begin{equation}\label{eq:experiments-exp-int}
    V(T) = T \varphi_1 \big(T A\big) a \approx T \|a\| V_m \varphi_1(T H_m) e_1.
\end{equation}
As the evaluation of $\varphi_1(T H_m) e_1$ is explained in Section \ref{sec:Exponential-Integrator},
we only discuss here the choice of the parameters $m$ and $T$.

The dimension $m$ of the Krylov subspace controls the quality of the approximation, where larger values theoretically
lead to a better approximation.
In our experiments, rather small numbers, like $m=5$, turned out to suffice to produce labelings very close to the ground truth labelings, that were generated by
the implicit Euler method -- see Figures~\ref{fig:expintChoiceOfm} and \ref{fig:expintMandrill}.
As the runtime of the algorithm increases with growing $m$, this parameter should
not be chosen too large.

\begin{figure}
    \centering
    \begin{subfigure}[b]{0.48\textwidth}
        \includegraphics[width=\textwidth]{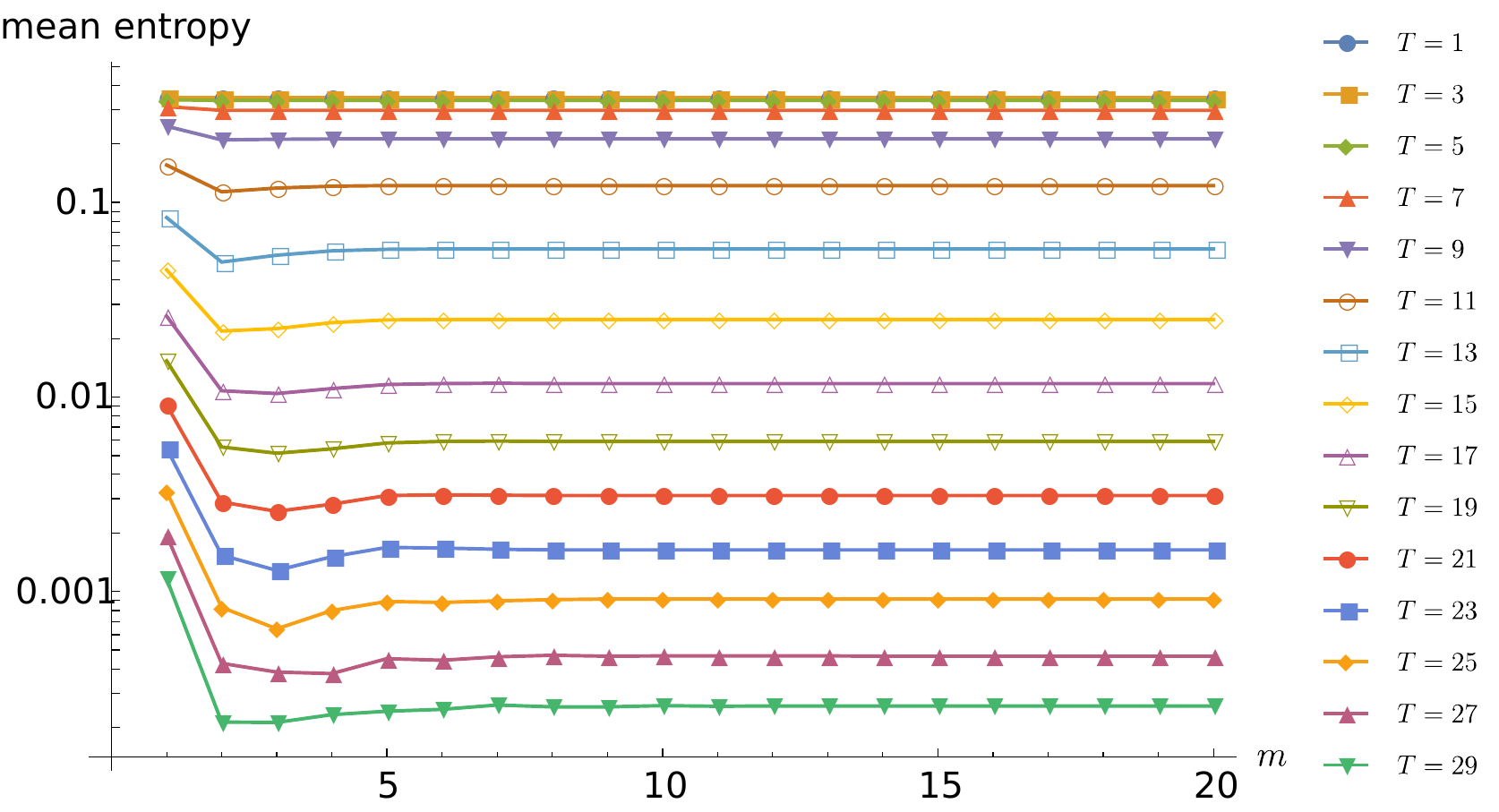}
        \caption{}
    \end{subfigure}
    \begin{subfigure}[b]{0.48\textwidth}
        \includegraphics[width=\textwidth]{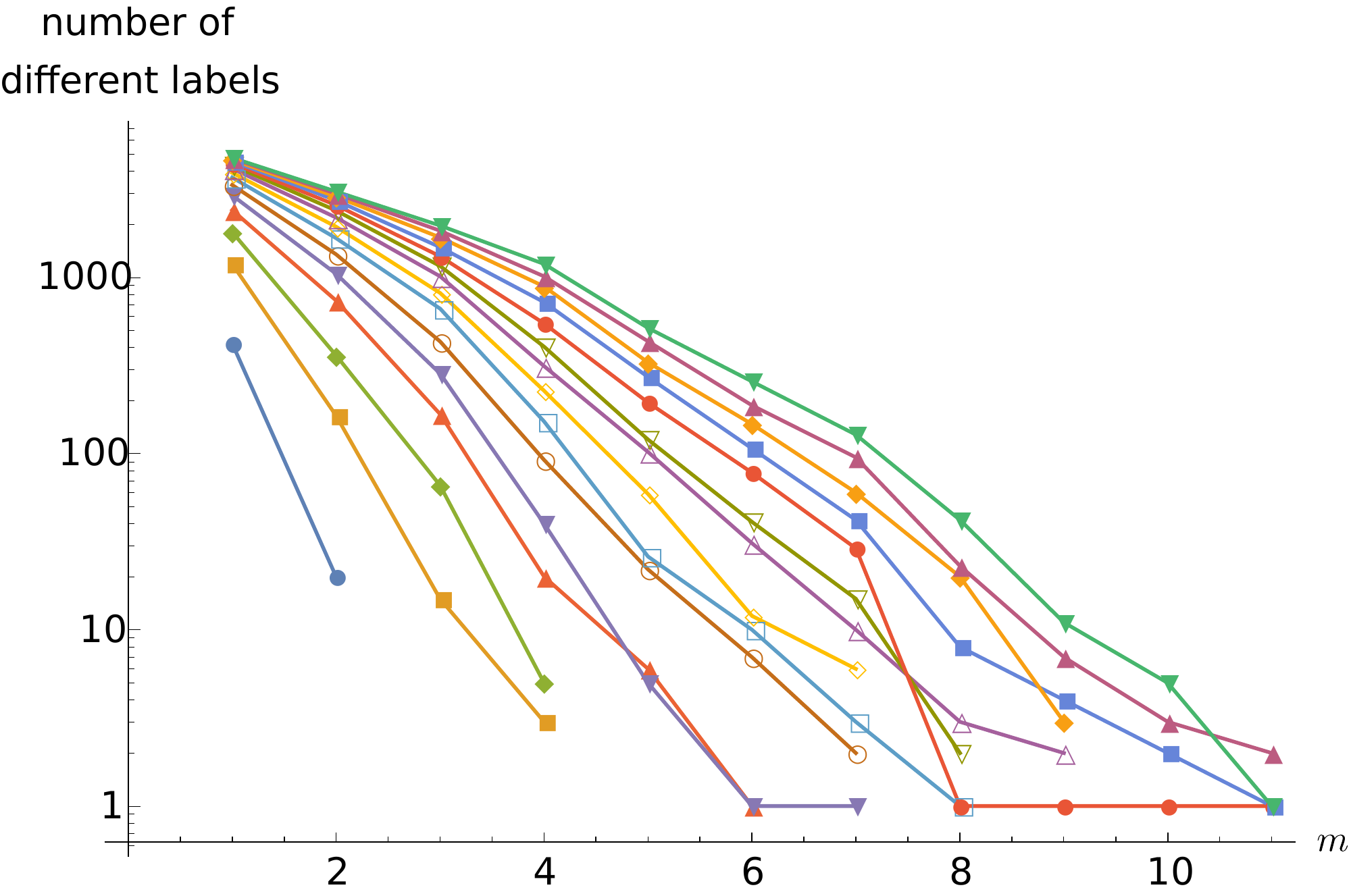}
        \caption{}
    \end{subfigure}
    \caption{\textbf{Linear assignment flow, exponential integrators, influence of the Krylov subspace dimension $m$ and the point of evaluation $T$.} The results correspond to the labeling problem shown by Figure \ref{fig:expintMandrill}. They demonstrate that $m$ can be chosen quite small.
        Figure (a) shows that the mean entropy decreases with increasing $T$ but does not decrease for \textit{fixed} $T$ and $m > 3$.
         Figure (b) shows for $T=1, \dotsc, 29$ (same color code as (a)) and for each $m$ the number of labels that change when the dimension of the Krylov subspace is increased to $m+1$. All curves decrease with increasing $m$, and curves that reach $0$ label changes just discontinue due to the logarithmic scale.
        The plot shows that independent of $T$ (i.e.~for any $T$) there is an $m$ such that increasing the dimension of the Krylov space
        does not change the labeling at all.
    }\label{fig:expintChoiceOfm}
\end{figure}

Scaling $A$ and $a$ in \eqref{eq:V-t-Duhamel} affects the vector field defining the linear ODE \eqref{eq:dot-V-ass-flow-linear}.
Hence, fixing any time point $T$ depends on this scaling factor, too.
As a consequence, since $A$ and $a$ depend on the problem data \eqref{eq:V-t-Duhamel}, the choice of $T$ is problem dependent.
On the other hand, the discussion following the proof of Theorem \ref{thm:local-error} showed that $\|V(t)\|$ increases with $t$, and $T$ merely has to chosen large enough such that $W(T)$ defined by \eqref{eq:W-ass-flow-linear} satisfies the termination criterion \eqref{eq:termination-criterion} -- see \eqref{eq:termination-rough} for a rough estimate.
Choosing $T$ overly large will cause  numerical underflow and overflow issues, however.

Almost all runtime is consumed by the Arnoldi iteration producing the subspace basis $V_{1},\dotsc,V_{m}$. Due to the small dimension $m$, the total runtime is very short, and time required for the subsequent evaluation of the right-hand side of \eqref{eq:experiments-exp-int} is neclectable.

\begin{figure}
    \centering
    \begin{subfigure}[b]{0.3\textwidth}
        \includegraphics[width=\textwidth]{Figures/mandrill-rho0_5-Ni-1}
        \caption{}
    \end{subfigure}
    \quad
    \begin{subfigure}[b]{0.3\textwidth}
        \includegraphics[width=\textwidth]{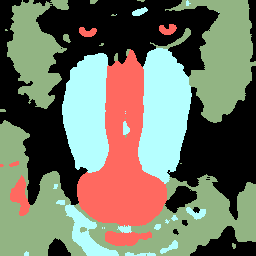}
        \caption{}
    \end{subfigure}
    \quad
    \begin{subfigure}[b]{0.3\textwidth}
        \includegraphics[width=\textwidth]{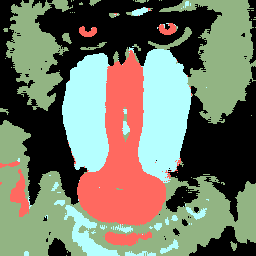}
        \caption{}
    \end{subfigure}
    \caption{\textbf{Linear assignment flow, exponential integrators, sample labelings.}
        Figure (a) shows the ground truth labeling as generated by the implicit Euler method.
        Figure (b) displays the labeling generated by the exponential integrator using the Krylov subspace dimension $m=5$,
        which is very close to the ground truth labeling. As demonstrated by Figure \ref{fig:expintChoiceOfm}, $m$ cannot be chosen too small, however, since labelings then start to deteriorate rapidly. This is illustrated by Figure (c) which shows the labeling for $m=3$. Comparison with (b) shows that dimensions $4$ and $5$ `contain' small-scale details of the correct labeling induced by the linear assignment flow.
    }\label{fig:expintMandrill}
\end{figure}

\section{Conclusion}
\label{sec:Conclusion}

We investigated numerical methods for image labeling by integrating the large system of nonlinear ODEs defining the assignment flow \eqref{eq:assignment-flow}, which evolves on the assignment manifold. All methods exactly respect the underlying geometry. 
Specifically, we adapted RKMK methods and showed that \textit{embedded} RKMK-methods work very well for automatically adjusting the step size, at neglectable additional costs. Raising the order enables leads to larger step sizes, which is compensated by the higher computational costs per iteration, however. In either case, each iteration only involves convolution like operations over local neighborhoods together with pixelwise nonlinear functions evaluations.

We derived and introduced the \textit{linear} assignment flow, a nonlinear approximation of the (full) assignment flow that is governed by a large linear system of ODEs on the tangent space. Experiments showed that the approximation is remarkably close, as measured by the number of different label assignments.

We investigated two further families of numerical schemes for integrating the linear assignment flow: established RK schemes with adaptive step size selection based on a local integration error estimate, and exponential integrators for approximately evaluating Duhamel's integral using a Krylov subspace. In the former case, higher-order schemes really pay, unlike for the RKMK schemes and the full assignment flow, as mentioned above. Choosing the classical RK scheme with $q=4$, for example, few dozens of iterations suffice to reach the termination criterion, with high potential for parallel implementation. The exponential integrators, on the other hand, directly approximate the integral determining $V(T)$ and in this sense are non-iterative. Here, a Krylov subspace basis of low dimension merely has to be computed, using a standard iterative method. Even though this method is differs mathematically from the RK schemes, it has potential for real-time implementation as well.

All methods provide a sound basis for more advanced image analysis tasks that involve labeling by evaluating the assignment flow as a subroutine. Accordingly, our future work concerns an extension of the unsupervised labeling approach \cite{Zern:2018ac}, where label dictionaries are directly learned from data through label assignment. Furthermore, methods under investigation for learning to adapt regularization parameters of the assignment flow to specific image classes, require advanced discretization and numerical methods based on the results reported in the present paper.

\vspace{0.5cm}

\noindent
\textbf{Acknowledgements.} This work was supported by the German Research Foundation (DFG), grant GRK 1653.
\bibliographystyle{amsalpha}
\bibliography{ExponentialIntegrators}
\end{document}